 \newtheorem{thm}{Theorem}[section]
 \newtheorem{cor}[thm]{Corollary}
 \newtheorem{lem}[thm]{Lemma}
 \theoremstyle{definition}
 \newtheorem{defn}[thm]{Definition}
 \newtheorem{exam}{Example}[section]
 \newtheorem{rem}{Remark}[section]
 \newtheorem{quest}{Question}[section]
 \numberwithin{equation}{section}
\DeclareMathOperator{\tr}{tr}
\DeclareMathOperator{\di}{div}
\DeclareMathOperator{\grad}{grad}
\newcommand{\Rom}{R}
\newcommand{\V}{\mathcal{V}}
\DeclareMathOperator{\Ker}{Ker}
\newcommand{\Hi}{\mathcal{H}}
\newcommand{\Si}{\mathbb{S}}
\newcommand{\Real}{\mathbb{R}}
\newcommand{\Vol}{Vol}
\newcommand{\X}{\mathfrak{X}}
\begin{document}

\title{The biharmonic homotopy problem for unit vector fields on 2-tori}

\author{E.~Loubeau and M.~Markellos}

\address{D\'{e}partement de Math\'{e}matiques, LMBA, UMR 6205, Universit\'{e} de Bretagne Occidentale, 6, Avenue Victor Le Gorgeu, CS 93837, 29238 Brest Cedex 3, France}
\email{Eric.Loubeau@univ-brest.fr}

\address{Department of Mathematics and Statistics, University of Cyprus, P.O. Box 20537, 1678 Nicosia, Cyprus}
\email{mmarkellos@hotmail.gr}

\maketitle

\begin{abstract}
The bienergy of smooth maps between Riemannian manifolds, when restricted to unit vector fields, yields two different variational problems depending on whether one takes the full functional or just the vertical contribution. Their critical points, called biharmonic unit vector fields and biharmonic unit sections, form different sets. Working with surfaces, we first obtain general characterizations of biharmonic unit vector fields and biharmonic unit sections under conformal change of the metric. In the case of a 2-dimensional torus, this leads to a proof that biharmonic unit sections are always harmonic and a general existence theorem, in each homotopy class, for biharmonic unit vector fields.
\end{abstract}

\section{Introduction}

One of the most studied object in Differential Geometry is the energy functional of a map $\varphi:(M,g)\rightarrow (N, h)$ between
Riemannian manifolds, given by
\begin{equation*}
E_{1}(\varphi)=\frac{1}{2}\int_{M}{\|d\varphi\|^{2}v_{g}}
\end{equation*}
where $d\varphi$ denotes the differential of the map $\varphi$ (\cite{Baird03, EellsSampson64}). Its critical
points are called \textit{harmonic maps}
and are characterized by the vanishing of the \textit{tension
field} $\tau_{1}(\varphi)=\tr \nabla d\varphi$.

Given a Riemannian manifold $(P, k)$ and a Riemannian submersion $\pi:(P, k)\rightarrow (N, h)$ with $\V:TP\rightarrow \Ker d\pi$ the projection on the vertical subbundle, the \emph{vertical energy} of a section $\sigma:M \rightarrow P$ has been defined by C.M.Wood \cite{Wood86} as
\begin{equation*}
E_{1}^{v}(\sigma)=\frac{1}{2}\int_{M}\|\V(d\sigma)\|^{2}v_{g}.
\end{equation*}

Denote by $(TM, g_{S})$ the tangent bundle of $(M, g)$ equipped with the Sasaki metric $g_{S}$. A vector field $X$ on $M$ determines a map from $(M, g)$ into $(TM, g_{S})$ (i.e. a section of $TM$), embedding $M$ into its tangent bundle. It was shown in \cite{Ishiha79} and \cite{Nouhaud77} that if $M$ is compact and $X$ is a harmonic map from $(M, g)$ into $(TM, g_{S})$, then $X$ must be parallel. This rigidity may be overcome for vector fields of unit length. A unit vector field $V$ on $(M, g)$ is a section of the unit tangent sphere bundle $T_{1}M,$ i.e. $V:(M, g)\rightarrow (T_{1}M, g_{S})$, where $T_{1}M$ carries the metric induced from $g_{S}$. Its \emph{energy} $E_{1}(V)$ (resp. \emph{vertical energy} $E_{1}^{v}(V)$) is given by
\begin{equation*}
\begin{array}{lr}
E_{1}(V)=\frac{m}{2}\Vol(M)+\frac{1}{2}\int_{M}\|\nabla V\|^{2}v_{g}, & E_{1}^{v}(V)=\frac{1}{2}\int_{M}\|\nabla V\|^{2}v_{g},
\end{array}
\end{equation*}
where $m=\dim M$ \cite{Wiegm95}. Critical points of $E_{1}$, respectively $E_{1}^{v}$, with respect to variations through unit vector fields are called \emph{harmonic unit vector fields}, respectively \emph{harmonic unit sections}. While both functionals clearly give the same critical points, this will no longer be the case for our problem. The corresponding critical point condition has been determined in \cite{Wiegm95} and \cite{Wood97}. It should be pointed out that a harmonic unit vector field determines a harmonic map when an additional condition involving the curvature is satisfied (cf. \cite{GilMedr01, HanYim98}). Wiegmink (\cite{Wiegm95}) studied harmonic unit vector fields on a 2-dimensional torus. Harmonic unit vector fields have acquired an extensive literature, see for example the bibliography of \cite{DragomirPerro11}.

In \cite[p.77]{EellsLemai83}, Eells and Lemaire suggested studying polyharmonic maps of order $k$ between Riemannian manifolds. Of particular interest is the case
$k=2$. They defined the \textit{bienergy} of a map $\varphi$ between Riemannian manifolds as the functional
\begin{equation*}
E_{2}(\varphi)=\frac{1}{2}\int_{M}{\|\tau_{1}(\varphi)\|^{2}v_{g}},
\end{equation*}
and a smooth map $\varphi$ is \textit{biharmonic} if it is a critical
point of the functional $E_{2}$. In \cite{Jiang08}, Jiang first derived the associated
Euler-Lagrange equation for $E_{2}$. A harmonic map is automatically a biharmonic map, and non-harmonic biharmonic maps are called
\textit{proper biharmonic}. Since 2001, biharmonic submanifolds of Riemannian space forms have been vigorously studied by many geometers in order to classify and/or characterize them (\cite{CaMoOn01}). We refer to the survey (\cite{Ou16}) for some history, fundamental problems, current results and open problems on biharmonic submanifolds of Riemannian space forms.

In \cite{MarkUra}, Markellos and Urakawa defined the bienergy of a vector field $X$ on a Riemannian manifold $(M, g)$ as the bienergy of the corresponding map $X:(M, g)\mapsto (TM, g_{S})$. More precisely, they proved that if $(M, g)$ is compact and $X$ is biharmonic map, then $X$ must be parallel. Furthermore, this remains the case under the less stringent and arguably more natural condition that $X$ is a \emph{biharmonic vector field} i.e. a critical point of the bienergy functional with respect to variations through vector fields. The situation is very different when we consider the set of unit vector fields on $(M, g)$. The \emph{bienergy} (resp. \emph{vertical bienergy}) of a unit vector field $V$ is defined by
\begin{eqnarray}
E_{2}(V)&=&\frac{1}{2}\int_{M}\|\tau_{1}(V)\|^{2}v_{g}=\frac{1}{2}\int_{M}\Large \left
\{g(S(V), S(V))+g(\bar{\Delta}V,
\bar{\Delta}V)-(g(\bar{\Delta}V, V))^{2}\right\}v_{g},\nonumber\\
E_{2}^{v}(V)&=&\frac{1}{2}\int_{M}\|\tau_{1}^{v}(V)\|^{2}v_{g}=\frac{1}{2}\int_{M}\Big\{g(\bar{\Delta}V,
\bar{\Delta}V)-(g(\bar{\Delta}V, V))^{2}\Big\}v_{g},\nonumber
\end{eqnarray}
where $S(V)=\sum_{i=1}^{m}R(\nabla_{e_{i}}V, V)e_{i}$ and $\bar{\Delta}V=\sum_{i=1}^{m}\{\nabla_{\nabla_{e_{i}}{e_{i}}}V-\nabla_{e_{i}}\nabla_{e_{i}}V\}$ ($\{e_{i}\}_{i=1}^{m}$ a local orthonormal frame field of $(M, g)$). Critical points of $E_{2}$ (resp. $E_{2}^{v}$) with respect to variations through unit sections are called \emph{biharmonic unit vector fields } (resp. \emph{biharmonic unit sections}). In \cite{MarkUra14}, the authors derived the Euler-Lagrange equation for biharmonic unit vector fields and completely determined the class of invariant biharmonic unit vector fields on three-dimensional unimodular Lie groups and the generalized Heisenberg groups. We should mention that the majority of these examples are on non-compact Riemannian manifolds. The main goal of this paper is to study biharmonic unit vector fields and unit sections on surfaces, particularly tori. In this paper, we address the following questions:
\begin{quest}
Do there exist examples of biharmonic unit sections?
\end{quest}
\begin{quest}
Is there a relation between the notions of ''harmonic unit section'', a ''biharmonic unit section'' and ''biharmonic unit vector field''?
\end{quest}
\begin{quest}
Do biharmonic unit sections and biharmonic unit vector fields exist on a 2-dimensional torus?
\end{quest}
We should point out that a harmonic unit section is automatically a biharmonic unit section. In contrast with the harmonic case, we show that the notions of ''biharmonic unit vector field'' and ''biharmonic unit section'' are not equivalent (see Remark 3.1). As to Question 1.3, we show that biharmonic unit sections and unit vector fields are not invariant under conformal change of the metric on a 2-torus. This is a significant difference with the set of harmonic unit vector fields on $(T^{2}, g)$ which is conformally invariant (\cite{Wiegm95}).

The paper is organized in the following way: Section 2 contains the presentation of some basic notions such as the geometry of the unit tangent sphere, (bi)harmonic maps between Riemannian manifolds and the notions of ''biharmonic unit section'' and ''biharmonic unit vector field''. In Section 3, we give examples of non-harmonic biharmonic unit sections on Riemannian manifolds (see Theorems 3.1, 3.2 and 3.3). In Section 4, we derive the critical points of the bienergy functional (resp. vertical bienergy functional) for unit vector fields after a conformal change of the metric on the base manifold (see Theorems 4.3 and 4.4). In Section 5, we study biharmonic unit sections and unit vector fields on a 2-torus $T^{2}$ (see Theorems 5.3 and 5.6). Furthermore, we investigate the stability of biharmonic unit vector fields on $(T^{2}, g)$ (see Theorem 5.7).

\section{Preliminaries}

\subsection{Harmonic - Biharmonic maps}
Let $(M, g), (N, h)$ be Riemannian manifolds of dimensions $m$ and $n$, respectively, and let $\varphi:
(M, g)\rightarrow (N, h)$ be a smooth map between them. The
\textit{energy density of} $\varphi$ is the smooth function
$e(\varphi):M \rightarrow [0,\infty)$ given by
\begin{equation*}
e(\varphi)(x)=\frac{1}{2}\|d\varphi_{x}\|^{2}=\frac{1}{2}\sum_{i=1}^{m}{h(d\varphi_{x}(e_{i}), d\varphi_{x}(e_{i}))},
\end{equation*}
where $x \in M$, $\{e_{i}\}_{i=1}^{m}$ is an orthonormal basis for $T_{x}M$ and
$d\varphi_{x}$ denotes the differential of the map $\varphi$ at the
point $x$. Furthermore, we denote by $C^{\infty}(M, N)$ the space of
all smooth maps from $M$ to $N$, $\nabla^{\varphi}$ the connection
of the vector bundle $\varphi^{-1}TN$ induced from the Levi-Civita
connection $\bar{\nabla}$ of $(N, h)$ and $\nabla$ the Levi-Civita
connection of $(M, g)$.

Let $D$ be a compact domain of $M$. The \textit{energy (integral) of
$\varphi$} over $D$ is the integral of its energy density:
\begin{equation*}
E_{1}(\varphi; D)=\frac{1}{2}\int_{D}\|d\varphi\|^{2}v_{g}=\int_{D}e(\varphi)v_{g}.
\end{equation*}
If $M$ is compact, we write $E(\varphi)$ instead of $E(\varphi; D)$.

A map $\varphi: (M, g)\rightarrow (N, h)$ is said to be \textit{harmonic} if it is a critical point of the energy functional
$E(.;D):C^{\infty}(M, N)\rightarrow \Real$ for any compact domain $D$.
It is well known (\cite{EellsSampson64}) that the map $\varphi:
(M, g)\rightarrow (N, h)$ is harmonic if and only if
\begin{equation*}
\tau_{1}(\varphi)=\tr(\nabla
d\varphi)=\sum_{i=1}^{m}\{\nabla_{e_{i}}^{\varphi}d\varphi(e_{i})-d\varphi(\nabla_{e_{i}}{e_{i}})\}=0,
\end{equation*}
where $\{e_{i}\}_{i=1}^{m}$ is a local orthonormal frame field of $(M, g)$. The
equation $\tau_{1}(\varphi)=0$ is called the \emph{harmonic equation}.

Eells and Lemaire \cite[p.77]{EellsLemai83} introduced the
notion of polyharmonic maps. In this paper, we only consider
polyharmonic maps of order two, usually called
\emph{biharmonic maps}.

A smooth map $\varphi: (M, g)\rightarrow
(N, h)$ is said to be \emph{biharmonic} if it is a critical
point of the bienergy functional:
\begin{equation*}
E_{2}(\varphi)=\frac{1}{2}\int_{D}\|\tau_{1}(\varphi)\|^{2}v_{g},
\end{equation*}
over every compact region $D$ of $M$. The corresponding Euler-Lagrange equation associated to the bienergy functional
becomes more complicated and it involves the curvature of the
target manifold $(N, h)$ (\cite{Jiang08}). We should mention that a harmonic map is automatically a
biharmonic map, in fact a minimum of the bienergy functional. Many examples of
non-harmonic biharmonic maps have been obtained in
\cite{CaMoOn01}, \cite{CaMoOn02}, \cite{FetcuLouOnic17}, \cite{LoubeauOniciuc16} and
\cite{Ou16}. Non-harmonic biharmonic maps are called \emph{proper} biharmonic maps.

\subsection{The tangent bundle and the unit tangent sphere
bundle}

Let $(M, g)$ be an $m$-dimensional Riemannian manifold and $\nabla$
the associated Levi-Civita connection. Its Riemann curvature tensor
$R$ is defined by
\begin{equation*}
R(X, Y)Z =\nabla_{X}\nabla_{Y}Z - \nabla_{Y}\nabla_{X}Z -
\nabla_{[X, Y]}Z
\end{equation*}
for all vector fields $X, Y$ and $Z$ on $M$. The tangent bundle of $(M, g)$, denoted by $TM$, consists of
pairs $(x, u)$ where $x$ is a point in $M$ and $u$ a tangent vector
to $M$ at $x$. The mapping $\pi: TM \rightarrow M: (x, u)\mapsto x$ is
the natural projection from $TM$ onto $M$. The tangent space $T_{(x, u)}TM$ at a point $(x,
u)$ in $TM$ is a direct sum of the vertical subspace $\V_{(x,
u)}=Ker(d\pi|_{(x, u)})$ and the horizontal subspace $\Hi_{(x,
u)}$, with respect to the Levi-Civita connection $\nabla$ of $(M, g)$:
\begin{equation*}
T_{(x, u)}TM=\Hi_{(x, u)}\oplus \V_{(x, u)}.
\end{equation*}
For any vector $w\in T_{x}M$, there exists a unique vector $w^{h}\in
\Hi_{(x, u)}$ at the point $(x, u)\in TM$, which is called the
\emph{horizontal lift} of $w$ to $(x, u)$, such that $d\pi(w^{h})=w$
and a unique vector $w^{v}\in \V_{(x, u)}$, the
\emph{vertical lift} of $w$ to $(x, u)$, such that $w^{v}(df)=w(f)$
for all functions $f$ on $M$. Hence, every tangent vector
$\bar{w}\in T_{(x, u)}TM$ can be decomposed as
$\bar{w}=w_{1}^{h}+w_{2}^{v}$ for uniquely determined vectors
$w_{1}, w_{2} \in T_{x}M$. The \emph{horizontal} (respectively,
\emph{vertical}) \emph{lift} of a vector field $X$ on $M$ to $TM$ is
the vector field $X^{h}$ (respectively, $X^{v}$) on $TM$ whose value
at the point $(x, u)$ is the horizontal (respectively, vertical)
lift of $X_{x}$ to $(x, u)$.

The tangent bundle $TM$ of a Riemannian manifold $(M, g)$ can be
endowed in a natural way with a Riemannian metric $g_{S}$, the
\emph{Sasaki metric}, depending only on the Riemannian structure $g$
of the base manifold $M$. It is uniquely determined by
\begin{equation}\label{Eq: Sasaki metric}
\begin{array}{ll}
g_{S}(X^{h}, Y^{h})=g_{S}(X^{v}, Y^{v})=g(X, Y)\circ \pi, &g_{S}(X^{h}, Y^{v})=0\\
\end{array}
\end{equation}
for all vector fields $X$ and $Y$ on $M$. More intuitively, the
metric $g_{S}$ is constructed in such a way that the vertical and
horizontal subbundles are orthogonal and the bundle map $\pi: (TM,
g_{S})\mapsto (M, g)$ is a Riemannian submersion.

Next, we consider the unit tangent sphere bundle $T_{1}M$ which is
an embedded hypersurface of $TM$ defined by the equation $g_{x}(u,
u)=1$. The vector field $N_{(x, u)}=u^{v}$ is
a unit normal of $T_{1}M$ as well as the position vector for a point
$(x, u)\in T_{1}M$. For $X\in T_{x}M$, we define the
\emph{tangential lift} of $X$ to $(x, u)\in T_{1}M$ by
(\cite{Boe97})
\begin{equation*}
X^{t}_{(x, u)}=X^{v}_{(x, u)}-g(X, u)N_{(x, u)}.
\end{equation*}
For the sake of notational clarity, we will use $\bar{X}$ as a
shorthand of $X-g(X,u)u$. Then $X^{t}=\bar{X}^{v}$. Clearly, the
tangent space to $T_{1}M$ at $(x, u)$ is spanned by vectors of the
form $X^{h}$ and $X^{t}$ where $X\in T_{x}M$. The \emph{tangential
lift} of a vector field $X$ on $M$ to $T_{1}M$ is the vertical
vector field $X^{t}$ on $T_{1}M$ whose value at the point $(x, u)\in
T_{1}M$ is the tangential lift of $X_{x}$ to $(x, u)$. In the following, we consider
$T_{1}M$ with the metric induced from the Sasaki
metric $g_{S}$ of $TM$, which is also denoted by $g_{S}$.
We denote by $\X(M)$ (resp. $\X^{1}(M)$) the set of globally defined vector fields (resp. unit vector fields) on the base manifold $(M, g)$.

In the sequel, we concentrate on the map $V: (M, g)\rightarrow (T_{1}M,
g_{S})$. The tension field $\tau_{1}(V)$ is given by (\cite{GilMedr01})
\begin{equation}\label{Eq: tension field of V}
\tau_{1}(V)=(-S(V))^{h}+(-\bar{\Delta}V)^{t},
\end{equation}
where $\{e_{i}\}_{i=1}^{m}$ is a local orthonormal frame field of $(M, g)$, $S(V)=\sum_{i=1}^{m}{\Rom(\nabla_{e_{i}}{V}, V)e_{i}}$ and $\bar{\Delta}V=-\tr\nabla^{2}V=\sum_{i=1}^{m}\{\nabla_{\nabla_{e_{i}}{e_{i}}}V-\nabla_{e_{i}}\nabla_{e_{i}}V\}$.

Let now $(M, g)$ be a compact $m$-dimensional Riemannian manifold and $V \in \X^{1}(M).$ The \emph{bienergy} (resp. \emph{vertical bienergy}) $E_{2}(V)$ (resp. $E_{2}^{v}(V)$ of $V$ is defined as the bienergy (resp. vertical bienergy) associated with the corresponding map $V:(M, g)\rightarrow (T_{1}M, g_{S})$. More precisely, by using relations (\ref{Eq: Sasaki metric}) and
(\ref{Eq: tension field of V}), we get
\begin{eqnarray}
E_{2}(V)&=&\frac{1}{2}\int_{M}\|\tau_{1}(V)\|^{2}v_{g}=\frac{1}{2}\int_{M}\Large \left
\{g(S(V), S(V))+g(\bar{\Delta}V,
\bar{\Delta}V)-(g(\bar{\Delta}V, V))^{2}\right\}v_{g},\nonumber\\
E_{2}^{v}(V)&=&\frac{1}{2}\int_{M}\|\tau_{1}^{v}(V)\|^{2}v_{g}=\frac{1}{2}\int_{M}\Large \left \{g(\bar{\Delta}V,
\bar{\Delta}V)-(g(\bar{\Delta}V, V))^{2}\right\}v_{g}.\nonumber
\end{eqnarray}
Now, we give the following definitions:
\begin{defn}
Let $(M, g)$ be a Riemannian manifold. A unit vector field
$V\in \X^{1}(M)$ is a \emph{biharmonic unit vector field} (resp. \emph{biharmonic unit section}) if the corresponding
map $V:(M, g)\rightarrow (T_{1}M, g_{S})$ is a critical point for the
bienergy functional $E_{2}$ (resp. $E_{2}^{v}$), considering only variations through maps
defined by unit vector fields.
\end{defn}
The critical point condition for the bienergy functional $E_{2}$ restricted to $\X^{1}(M)$ has been determined in \cite{MarkUra14}. More precicely,
\begin{thm}
Let $(M, g)$ be an  $m$-dimensional Riemannian manifold. Then, $V\in \X^{1}(M)$ is a biharmonic unit vector field if and only if
\begin{eqnarray}\label{Eq: biharmonic unit vector field}
\sum_{i=1}^{m}\Large \left\{R(e_{i},
\nabla_{e_{i}}S(V))V+(\nabla_{e_{i}}R)(e_{i}, S(V))V
+2R(e_{i}, S(V))\nabla_{e_{i}}V\right\}+\nonumber\\
+\bar{\Delta}[\bar{\Delta}V-g(\bar{\Delta}V, V)V]-g(\bar{\Delta}V,
V)\bar{\Delta}V
\end{eqnarray}
is collinear to $V$, where $\{e_{i}\}_{i=1}^{m}$ is a local orthonormal frame field of $(M, g)$.
\end{thm}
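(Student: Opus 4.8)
The plan is to recover \eqref{Eq: biharmonic unit vector field} as the Euler--Lagrange equation of $E_2$, by combining Jiang's first variation formula for the bienergy with the fact that a variation of $V$ through unit vector fields displaces $V$ only along the fibres of $T_1M\to M$. Recall that for a map $\varphi:(M,g)\to(N,h)$ and a smooth variation $\{\varphi_t\}$ with variation field $W=\tfrac{\partial\varphi_t}{\partial t}\big|_{t=0}$,
$$\frac{d}{dt}\Big|_{t=0}E_2(\varphi_t)=-\int_M h\big(\tau_2(\varphi),W\big)\,v_g,\qquad \tau_2(\varphi)=-\Delta^{\varphi}\tau_1(\varphi)-\sum_{i=1}^{m}R^{N}\big(\tau_1(\varphi),d\varphi(e_i)\big)d\varphi(e_i),$$
where $\Delta^{\varphi}=-\sum_i\big(\nabla^{\varphi}_{e_i}\nabla^{\varphi}_{e_i}-\nabla^{\varphi}_{\nabla_{e_i}e_i}\big)$ is the rough Laplacian of $\varphi^{-1}TN$. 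Take $N=T_1M$ and $\varphi=V$. For $Z\in\X(M)$ the curve $V_t=(V+tZ)/\|V+tZ\|$ is a variation of $V$ through unit vector fields with variation field $(\bar Z)^{v}=Z^{t}$, and every admissible variation field arises in this way (the tangential lift automatically discards the $V$-component); hence the admissible $W$ are exactly the tangential lifts $Z^{t}$, $Z\in\X(M)$. Therefore $V$ is a biharmonic unit vector field if and only if $g_S(\tau_2(V),Z^{t})=0$ for all $Z$, i.e. --- writing the vertical part of the bitension field as $C^{t}$ for a vector field $C$ on $M$ --- if and only if $\bar C=C-g(C,V)V=0$, that is, $C$ is collinear to $V$. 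Note that only the vertical part of $\tau_2(V)$ is constrained; this is precisely why ``biharmonic unit vector field'' is weaker than ``$V$ is a biharmonic map into $T_1M$''.

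It remains to compute this vertical part $C$ and to identify it with the expression in \eqref{Eq: biharmonic unit vector field}. Here I would use the standard machinery: the Levi--Civita connection of the Sasaki metric on $TM$ (the formulas for $\nabla^{g_S}_{X^h}Y^h$, $\nabla^{g_S}_{X^h}Y^v$, $\nabla^{g_S}_{X^v}Y^h$, $\nabla^{g_S}_{X^v}Y^v$, with the curvature corrections $\pm\tfrac12(R(\cdot,\cdot)u)^{v}$ and $\tfrac12(R(u,\cdot)\cdot)^{h}$), the passage to $T_1M$ via the tangential lift together with the Gauss and Weingarten formulas for the hypersurface $T_1M\hookrightarrow(TM,g_S)$ whose unit normal is the position vector $N=u^{v}$, and the induced curvature tensor of $(T_1M,g_S)$ obtained from Kowalski's formula for the curvature of $(TM,g_S)$ plus the Gauss equation. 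With $\tau_1(V)=(-S(V))^{h}+(-\bar\Delta V)^{t}$ from \eqref{Eq: tension field of V} and $dV(e_i)=e_i^{h}+(\nabla_{e_i}V)^{t}$, the computation splits into three pieces: (i) the iterated pullback derivatives $\sum_i\nabla^{V}_{e_i}\nabla^{V}_{e_i}\tau_1(V)$ and $\sum_i\nabla^{V}_{\nabla_{e_i}e_i}\tau_1(V)$, hence $\Delta^{V}\tau_1(V)$; (ii) extracting the vertical component; (iii) the curvature term $\sum_i R^{T_1M}(\tau_1(V),dV(e_i))dV(e_i)$, again taking the vertical part.

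In (i)--(ii) the horizontal summand $-S(V)^{h}$ of $\tau_1(V)$ contributes to the vertical component only through the curvature corrections in the connection, and differentiating twice produces exactly $\sum_i\{R(e_i,\nabla_{e_i}S(V))V+(\nabla_{e_i}R)(e_i,S(V))V\}$ together with a multiple of $\sum_i R(e_i,S(V))\nabla_{e_i}V$; the vertical summand $-\bar\Delta V^{t}=-(\overline{\bar\Delta V})^{v}$ contributes $\bar\Delta\big(\bar\Delta V-g(\bar\Delta V,V)V\big)$, the inner projection being forced by the identity $X^{t}=(\bar X)^{v}$. In (iii) the mixed horizontal--vertical blocks of the Sasaki curvature supply the remaining part of $\sum_i R(e_i,S(V))\nabla_{e_i}V$, while the vertical--vertical block --- which by the Gauss equation is the intrinsic curvature of the round fibre, $R^{S}(a,b)c=g(b,c)a-g(a,c)b$ --- acting on $(-\bar\Delta V)^{t}$ and $(\nabla_{e_i}V)^{t}$ and summed over $i$ produces $-g(\bar\Delta V,V)\bar\Delta V$ (using $\sum_i\|\nabla_{e_i}V\|^{2}=\|\nabla V\|^{2}=g(\bar\Delta V,V)$, since $\nabla_{e_i}V\perp V$), up to terms collinear with $V$. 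Adding the three contributions and discarding all components along $V$ (which the tangential lift annihilates), $C$ equals $-1$ times the expression displayed in \eqref{Eq: biharmonic unit vector field}, so the criterion ``$C$ collinear to $V$'' becomes the statement of the theorem.

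The main obstacle is the bookkeeping in (i) and (iii): the curvature tensor of $(T_1M,g_S)$ is cumbersome, the pullback connection repeatedly converts horizontal lifts into vertical ones and back, and one must track carefully the third-order covariant derivatives of $V$, the single derivative-of-curvature term $(\nabla_{e_i}R)(e_i,S(V))V$ arising from commuting $\nabla$ past $R$, and the numerous $g(\,\cdot\,,V)V$ projection corrections coming from the tangential lift and from $\|V\|\equiv1$; it is easy to drop or double-count a term. Two checks keep the computation honest: ignoring the curvature of $M$ must reduce the vertical part to $\bar\Delta[\bar\Delta V-g(\bar\Delta V,V)V]-g(\bar\Delta V,V)\bar\Delta V$, which is exactly the Euler--Lagrange equation one obtains by the same method for the vertical bienergy $E_2^{v}$ (biharmonic unit sections); and linearising \eqref{Eq: biharmonic unit vector field} at a parallel field must reproduce the known first-order condition for harmonic unit vector fields.
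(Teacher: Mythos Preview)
The paper does not actually prove this theorem; it quotes the result from \cite{MarkUra14}. From the way the present paper reuses that machinery (see the proof of Theorem~4.3, which cites ``the proof of Theorem 3.2 in \cite{MarkUra14}'' and imports identities such as $g(\nabla_{\partial_t}\bar\Delta V_t,\bar\Delta V_t)=\Delta[g(\nabla_{\partial_t}V_t,\bar\Delta V_t)]+2\di\theta_t+g(\nabla_{\partial_t}V_t,\bar\Delta\bar\Delta V_t)$), one can infer that the original derivation proceeds by a \emph{direct} first variation of the explicit integrand $g(S(V),S(V))+g(\bar\Delta V,\bar\Delta V)-(g(\bar\Delta V,V))^{2}$, integrating by parts on $M$ to isolate the variation field $X=\nabla_{\partial_t}V_t|_{t=0}\perp V$. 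The target geometry of $(T_1M,g_S)$ enters only through the identity $\tau_1(V)=(-S(V))^{h}+(-\bar\Delta V)^{t}$, which converts $E_2(V)$ into an integral on the base; no curvature tensor of the Sasaki metric and no abstract bitension field appear.

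Your route is genuinely different: you pass through Jiang's bitension $\tau_2(V)$ and then extract its vertical component using the full connection and curvature of $(T_1M,g_S)$. This is a valid strategy and must give the same answer, but it is the harder road --- the Sasaki curvature is cumbersome, and, as you acknowledge, tracking the horizontal/vertical conversions, the $(\nabla_{e_i}R)$-term, and the tangential-lift projections is delicate. What your approach buys is a clean conceptual explanation of \emph{why} the criterion is ``vertical part of $\tau_2(V)$ collinear to $V$''; what the direct approach buys is a much shorter computation carried out entirely with $S(V)$ and $\bar\Delta V$ on $M$. As it stands your proposal is an outline rather than a proof: the assertions in your steps (i)--(iii) about which pieces survive and how the coefficient $2$ on $R(e_i,S(V))\nabla_{e_i}V$ assembles from the two sources are plausible but not established, and the sanity checks you list at the end are consistency tests, not substitutes for doing the calculation.
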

Following the terminology used in \cite{MarkUra14}, we get
\begin{thm}
Let $(M, g)$ be an  $m$-dimensional Riemannian manifold. Then, $V\in \X^{1}(M)$ is a biharmonic unit section if and only if
\begin{eqnarray}\label{Eq: biharmonic unit section}
\bar{\Delta}[\bar{\Delta}V-g(\bar{\Delta}V, V)V]-g(\bar{\Delta}V,
V)\bar{\Delta}V
\end{eqnarray}
is collinear to $V$.
\end{thm}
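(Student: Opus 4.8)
The plan is to compute directly the first variation of the vertical bienergy along deformations of $V$ through unit vector fields and read off the Euler--Lagrange condition; since the excerpt already records $E_{2}^{v}(V)=\tfrac12\int_{M}\{g(\bar{\Delta}V,\bar{\Delta}V)-g(\bar{\Delta}V,V)^{2}\}\,v_{g}$, one can work entirely with vector fields on $(M,g)$ and never revisit the geometry of $(T_{1}M,g_{S})$. First I would pin down the admissible variation fields: if $V_{t}\in\X^{1}(M)$, $t\in(-\varepsilon,\varepsilon)$, is a smooth family with $V_{0}=V$, then differentiating $g(V_{t},V_{t})\equiv 1$ at $t=0$ gives $g(W,V)=0$, where $W:=\tfrac{\partial}{\partial t}\big|_{t=0}V_{t}$; conversely, for every $W\in\X(M)$ with $g(W,V)=0$ the smooth curve $V_{t}:=(V+tW)/\|V+tW\|$ lies in $\X^{1}(M)$ and has $\tfrac{\partial}{\partial t}\big|_{t=0}V_{t}=W$, because $\|V+tW\|^{2}=1+t^{2}\|W\|^{2}$. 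So the admissible variation fields are exactly the vector fields orthogonal to $V$.

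Next, since $\bar{\Delta}=-\tr\nabla^{2}$ is a fixed linear differential operator on $\X(M)$ not involving $t$, we have $\tfrac{\partial}{\partial t}\big|_{t=0}\bar{\Delta}V_{t}=\bar{\Delta}W$, and differentiating $E_{2}^{v}(V_{t})$ under the integral sign gives
\[
\frac{d}{dt}\Big|_{t=0}E_{2}^{v}(V_{t})=\int_{M}\Big\{g(\bar{\Delta}V,\bar{\Delta}W)-g(\bar{\Delta}V,V)\big(g(\bar{\Delta}W,V)+g(\bar{\Delta}V,W)\big)\Big\}\,v_{g}.
\]
I would then integrate by parts using that $\bar{\Delta}$ is formally self-adjoint on the closed manifold $M$, namely $\int_{M}g(\bar{\Delta}A,B)\,v_{g}=\int_{M}g(A,\bar{\Delta}B)\,v_{g}$ for $A,B\in\X(M)$: the first term becomes $\int_{M}g(\bar{\Delta}\bar{\Delta}V,W)\,v_{g}$; for the second, after moving the scalar $g(\bar{\Delta}V,V)$ inside the inner product I would integrate by parts once more to get $-\int_{M}g\big(\bar{\Delta}[\,g(\bar{\Delta}V,V)V\,],W\big)\,v_{g}$; the third is already $-\int_{M}g\big(g(\bar{\Delta}V,V)\bar{\Delta}V,W\big)\,v_{g}$. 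Collecting the pieces and using $\bar{\Delta}\bar{\Delta}V-\bar{\Delta}[g(\bar{\Delta}V,V)V]=\bar{\Delta}[\bar{\Delta}V-g(\bar{\Delta}V,V)V]$,
\[
\frac{d}{dt}\Big|_{t=0}E_{2}^{v}(V_{t})=\int_{M}g\Big(\bar{\Delta}[\bar{\Delta}V-g(\bar{\Delta}V,V)V]-g(\bar{\Delta}V,V)\bar{\Delta}V,\ W\Big)\,v_{g}.
\]
Writing $Z$ for the vector field in the first slot — exactly the one in \eqref{Eq: biharmonic unit section} — the field $V$ is a critical point of $E_{2}^{v}$ iff this integral vanishes for every $W$ with $g(W,V)=0$; choosing $W=Z-g(Z,V)V$ forces $Z-g(Z,V)V\equiv 0$, i.e. $Z$ is collinear with $V$, as asserted.

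I do not expect a serious obstacle: this is a short first-variation computation. The only delicate point is the interplay of the integration by parts with the pointwise constraint $g(W,V)=0$, which is precisely what turns ``$Z=0$'' into ``$Z$ collinear with $V$''. As a sanity check, note $E_{2}(V)=\tfrac12\int_{M}g(S(V),S(V))\,v_{g}+E_{2}^{v}(V)$, so the Euler--Lagrange operator of $E_{2}$ on $\X^{1}(M)$ is the sum of that of $\tfrac12\int_{M}\|S(V)\|^{2}$ and the operator $Z$ above; comparing with \eqref{Eq: biharmonic unit vector field} shows the former accounts exactly for the three curvature terms there, consistently with the computation.
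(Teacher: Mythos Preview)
Your argument is correct. The paper does not actually supply a proof of this theorem: it simply writes ``Following the terminology used in \cite{MarkUra14}, we get'' and states the result. From the proof of Theorem~4.3 one can see the style of the \cite{MarkUra14} computation, which uses the pointwise identities
\[
g(\nabla_{\partial_{t}}\bar{\Delta}V_{t},\bar{\Delta}V_{t})=\Delta[g(\nabla_{\partial_{t}}V_{t},\bar{\Delta}V_{t})]+2\di\theta_{t}+g(\nabla_{\partial_{t}}V_{t},\bar{\Delta}\bar{\Delta}V_{t}),\qquad
g(\nabla_{\partial_{t}}\bar{\Delta}V_{t},V_{t})=2\di\omega_{t}+g(\nabla_{\partial_{t}}V_{t},\bar{\Delta}V_{t}),
\]
and then Stokes' theorem to discard the divergence terms. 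Your route is the same first-variation computation, but you bypass these identities by invoking directly the formal self-adjointness of $\bar{\Delta}$; this is a harmless repackaging and yields the identical Euler--Lagrange expression. The one point to tidy up is that the theorem is stated for an arbitrary (not necessarily compact) $(M,g)$: your integration by parts and the final choice $W=Z-g(Z,V)V$ tacitly assume $M$ closed, so in the non-compact case you should restrict to compactly supported $W$ and conclude $Z-g(Z,V)V=0$ pointwise via a bump-function argument, which is routine.
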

If a unit vector field $V$ of a Riemannian manifold $(M, g)$ defines a harmonic map from $(M, g)$ into $(T_{1}M, g_{S})$ i.e. $S(V)=0$ and $\bar{\Delta}V \hspace{0.10cm}$ is collinear to $V$, then it is automatically a biharmonic unit vector field and biharmonic unit section. Furthermore, if $V$ is a harmonic unit vector field (equivalently, harmonic unit section) i.e. $\bar{\Delta}V$ is collinear to $V$, then it is automatically a biharmonic unit section but, not necessarily a biharmonic unit vector field, unless the manifold is flat.

\section{Examples of biharmonic unit sections which are not harmonic sections}

\begin{exam} We consider the compact and simple unimodular Lie group $SU(2)$ equipped with a left-invariant Riemannian metric $< , >$. Then, there there exists an orthonormal basis $\{e_{1}, e_{2}, e_{3}\}$ of the Lie algebra $su(2)$ such that (\cite{Milnor76})
\begin{equation}\label{Eq:Liebracketsunimodular}
\begin{array}{lll}
[e_{2}, e_{3}]=\lambda_{1}e_{1}, &[e_{3}, e_{1}]=\lambda_{2}e_{2}, &[e_{1}, e_{2}]=\lambda_{3}e_{3},
\end{array}
\end{equation}
where $\lambda_{1}, \lambda_{2}, \lambda_{3}$ are strictly positive constants with $\lambda_{1}\geq \lambda_{2}\geq \lambda_{3}$.
\end{exam}
Let $\theta^{i}, i=1, 2, 3$, be the dual one forms of $\{e_{i}\},
i=1, 2, 3$. Then the Levi-Civita connection $\nabla$ is determined
by
\begin{eqnarray}\label{Eq:nabla SU(2)}
\nabla e_{1}&=&\mu_{3}e_{2}\otimes\theta^{3}-\mu_{2}e_{3}\otimes \theta^{2},\nonumber\\
\nabla e_{2}&=&-\mu_{3}e_{1}\otimes \theta^{3}+\mu_{1}e_{3}\otimes\theta^{1},\\
\nabla e_{3}&=&\mu_{2}e_{1}\otimes \theta^{2}-\mu_{1}e_{2}\otimes
\theta^{1}\nonumber
\end{eqnarray}
where
\begin{equation*}
\mu_{i}=\frac{1}{2}(\lambda_{1}+\lambda_{2}+\lambda_{3})-\lambda_{i}, i=1,2,3.
\end{equation*}
By using relations (\ref{Eq:nabla SU(2)}), we get
\begin{eqnarray}\label{Eq: Delta e1, e2, e3}
\bar{\Delta}e_{1}&=&-\nabla_{e_{1}}\nabla_{e_{1}}e_{1}-\nabla_{e_{2}}\nabla_{e_{2}}e_{1}-\nabla_{e_{3}}\nabla_{e_{3}}e_{1}=
(\mu_{2}^{2}+\mu_{3}^{2})e_{1},\nonumber\\
\bar{\Delta}e_{2}&=&-\nabla_{e_{1}}\nabla_{e_{1}}e_{2}-\nabla_{e_{2}}\nabla_{e_{2}}e_{2}-\nabla_{e_{3}}\nabla_{e_{3}}e_{2}=
(\mu_{1}^{2}+\mu_{3}^{2})e_{2},\\
\bar{\Delta}e_{3}&=&-\nabla_{e_{1}}\nabla_{e_{1}}e_{3}-\nabla_{e_{2}}\nabla_{e_{2}}e_{3}-\nabla_{e_{3}}\nabla_{e_{3}}e_{3}=
(\mu_{1}^{2}+\mu_{2}^{2})e_{3}.\nonumber
\end{eqnarray}
Let $V=x e_{1}+y e_{2}+z e_{3}$ be a left-invariant unit vector field on $SU(2)$. By (\ref{Eq: Delta e1, e2, e3}), we obtain
\begin{eqnarray}\label{Eq: Delta V SU(2)}
\bar{\Delta}V&=& x \bar{\Delta}e_{1}+ y\bar{\Delta}e_{2}+z\bar{\Delta}e_{3}=(\mu_{2}^{2}+\mu_{3}^{2}) x e_{1}+(\mu_{1}^{2}+\mu_{3}^{2}) y e_{2}+(\mu_{1}^{2}+\mu_{2}^{2}) z e_{3},\nonumber\\
<\bar{\Delta}V, V>&=&A=(\mu_{2}^{2}+\mu_{3}^{2})x^{2}+(\mu_{1}^{2}+\mu_{3}^{2}) y^{2}+(\mu_{1}^{2}+\mu_{2}^{2}) z^{2},\\
\bar{\Delta}\bar{\Delta}V&=&(\mu_{2}^{2}+\mu_{3}^{2})^{2} x e_{1}+(\mu_{1}^{2}+\mu_{3}^{2})^{2} y e_{2}+(\mu_{1}^{2}+\mu_{2}^{2})^{2} z e_{3}\nonumber.
\end{eqnarray}
Taking into account that the expression $<\bar{\Delta}V, V>$ is a constant (since $V$ is left-invariant) and combining (\ref{Eq: biharmonic unit section}), (\ref{Eq: Delta V SU(2)}), we conclude that $V$ is a biharmonic unit section, that is $\bar{\Delta}\bar{\Delta}V-2<\bar{\Delta}V, V>\bar{\Delta}V$ collinear to $V$ if and only if there exists a real constant $\lambda$ such that
\begin{eqnarray}
x\Large\{(\mu_{2}^{2}+\mu_{3}^{2})^{2}-2(\mu_{2}^{2}+\mu_{3}^{2})A-\lambda\Large\}&=&0,\nonumber\\
y\Large\{(\mu_{1}^{2}+\mu_{3}^{2})^{2}-2(\mu_{1}^{2}+\mu_{3}^{2})A-\lambda\Large\}&=&0,\nonumber\\
z\Large\{(\mu_{1}^{2}+\mu_{2}^{2})^{2}-2(\mu_{1}^{2}+\mu_{2}^{2})A-\lambda\Large\}&=&0,\nonumber\\
x^{2}+y^{2}+z^{2}&=&1.
\end{eqnarray}
As a consequence, this yields (in combination with the Table 2 in \cite{MarkUra14}),
\begin{thm}
Let $G$ be the Lie group $SU(2)$ equipped with a left-invariant Riemannian metric $<, >$. Let $\{e_{i}, i=1, 2, 3\}$ be an orthonormal
basis of the Lie algebra satisfying (\ref{Eq:Liebracketsunimodular}) with $\lambda_{1}, \lambda_{2}, \lambda_{3}$ strictly positive constants and $\lambda_{1}\geq \lambda_{2} \geq \lambda_{3}$. We distinguish the following subcases:
\begin{itemize}
\item $\lambda_{1}=\lambda_{2}=\lambda_{3}>0$. Then, every left-invariant unit vector field is a biharmonic unit section.
\item $\lambda_{1}=\lambda_{2}>\lambda_{3}$. Then, the left-invariant vector field $V=x e_{1}+y e_{2}+z e_{3}$ is a non-harmonic biharmonic unit section  if and only if its coordinates satisfy the circle equations: $\{z=\frac{1}{\sqrt{2}}, x^{2}+y^{2}=\frac{1}{2}\}$ and $\{z=-\frac{1}{\sqrt{2}}, x^{2}+y^{2}=\frac{1}{2}\}$
\item $\lambda_{1}>\lambda_{2}=\lambda_{3}$. Then, the left-invariant vector field $V=x e_{1}+y e_{2}+z e_{3}$ is a non-harmonic biharmonic unit section  if and only if its coordinates satisfy the circle equations: $\{x=\frac{1}{\sqrt{2}}, y^{2}+z^{2}=\frac{1}{2}\}$ and $\{x=-\frac{1}{\sqrt{2}}, y^{2}+z^{2}=\frac{1}{2}\}$.
\item $\lambda_{1}>\lambda_{2}>\lambda_{3}$. In this subcase, the only non-harmonic biharmonic unit sections are:$\pm \frac{1}{\sqrt{2}} e_{2}\pm \frac{1}{\sqrt{2}} e_{3}, \pm \frac{1}{\sqrt{2}} e_{1}\pm \frac{1}{\sqrt{2}} e_{3}, \pm \frac{1}{\sqrt{2}} e_{1}\pm \frac{1}{\sqrt{2}} e_{2}$.
\end{itemize}
Furthermore, a left-invariant vector field is a biharmonic unit section if and only if is a biharmonic unit vector field.
\end{thm}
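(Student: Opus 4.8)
The plan is to isolate the precise difference between the two critical-point conditions. By the two characterizations recalled above, a unit vector field $V$ on $SU(2)$ is a biharmonic unit vector field exactly when the expression (\ref{Eq: biharmonic unit vector field}) is collinear to $V$, and a biharmonic unit section exactly when (\ref{Eq: biharmonic unit section}) is collinear to $V$; the two expressions differ only by the curvature term
\begin{equation*}
C(V):=\sum_{i=1}^{3}\left\{R(e_{i},\nabla_{e_{i}}S(V))V+(\nabla_{e_{i}}R)(e_{i},S(V))V+2R(e_{i},S(V))\nabla_{e_{i}}V\right\}.
\end{equation*}
Hence it suffices to prove that for \emph{every} left-invariant unit vector field $V$ on $SU(2)$ the vector $C(V)$ is collinear to $V$: then (\ref{Eq: biharmonic unit vector field}) is collinear to $V$ if and only if (\ref{Eq: biharmonic unit section}) is, so the two classes coincide.

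First I would simplify $C(V)$. By (\ref{Eq:nabla SU(2)}) one has $\nabla_{e_{i}}e_{i}=0$ in the frame $\{e_{1},e_{2},e_{3}\}$ of (\ref{Eq:Liebracketsunimodular}); expanding $(\nabla_{e_{i}}R)(e_{i},S(V))V=\nabla_{e_{i}}\bigl(R(e_{i},S(V))V\bigr)-R(e_{i},\nabla_{e_{i}}S(V))V-R(e_{i},S(V))\nabla_{e_{i}}V$ then makes the first two terms telescope, so that
\begin{equation*}
C(V)=\sum_{i=1}^{3}\left\{\nabla_{e_{i}}\bigl(R(e_{i},S(V))V\bigr)+R(e_{i},S(V))\nabla_{e_{i}}V\right\}.
\end{equation*}
Next, a direct computation from (\ref{Eq:nabla SU(2)}) shows that the curvature is ``strongly diagonal'': $R(e_{i},e_{j})e_{k}=0$ for pairwise distinct $i,j,k$ and $R(e_{i},e_{j})e_{j}=K_{ij}e_{i}$, each sectional curvature $K_{ij}$ being an explicit quadratic polynomial in $\mu_{1},\mu_{2},\mu_{3}$ (consistent with $SU(2)$ being three-dimensional with diagonal Ricci tensor). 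Writing $V=xe_{1}+ye_{2}+ze_{3}$ and using (\ref{Eq:nabla SU(2)}) again, one computes the $\nabla_{e_{i}}V$ (each orthogonal to $e_{i}$) and then
\begin{equation*}
S(V)=(\mu_{3}-\mu_{2})K_{23}\,yz\,e_{1}+(\mu_{1}-\mu_{3})K_{13}\,xz\,e_{2}+(\mu_{2}-\mu_{1})K_{12}\,xy\,e_{3},
\end{equation*}
whence $R(e_{i},S(V))V=\sum_{a\neq i}S(V)_{a}\,K_{ia}\,(v_{a}e_{i}-v_{i}e_{a})$, where $S(V)_{a}$ is the $e_{a}$-component of $S(V)$ and $(v_{1},v_{2},v_{3})=(x,y,z)$.

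It then remains to substitute these expressions into the simplified formula for $C(V)$ and compute. One finds $C(V)=\Phi(x,y,z)\,V$, where $\Phi$ is a quadratic form in $x,y,z$ with coefficients polynomial in the $\mu_{i}$; for example, for $V$ proportional to $e_{1}+e_{2}$ one obtains $C(V)=\frac{1}{2}(\mu_{2}-\mu_{1})K_{12}(\mu_{1}K_{13}-\mu_{2}K_{23})\,V$, and the general case is entirely similar (and trivial when $V=\pm e_{i}$, where $S(V)=0$). This shows $C(V)$ is always collinear to $V$, which gives the equivalence. The only genuine obstacle is the bookkeeping in this last step --- checking the two polynomial identities $v_{j}\,C(V)_{1}=v_{1}\,C(V)_{j}$, $j=2,3$, in $(x,y,z)$. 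Alternatively one can bypass it: solving the polynomial conditions derived above for biharmonic unit sections gives exactly the fields listed in the four cases, and comparison with Table~2 of \cite{MarkUra14}, which classifies the left-invariant biharmonic unit vector fields on $SU(2)$, shows the two lists coincide.
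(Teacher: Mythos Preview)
Your principal claim---that $C(V)=\Phi(x,y,z)\,V$ for \emph{every} left-invariant unit $V$, with a single scalar $\Phi$---is false, so the main argument breaks down. Carrying your computation to the end (using the identity $\mu_{i}K_{ik}-\mu_{j}K_{jk}=(\mu_{j}-\mu_{i})K_{ij}$, which also yields your formula for $S(V)$) one obtains
\begin{equation*}
C(V)=x\bigl(A^{2}y^{2}+B^{2}z^{2}\bigr)e_{1}+y\bigl(A^{2}x^{2}+C^{2}z^{2}\bigr)e_{2}+z\bigl(B^{2}x^{2}+C^{2}y^{2}\bigr)e_{3},
\end{equation*}
with $A=(\mu_{2}-\mu_{1})K_{12}$, $B=(\mu_{1}-\mu_{3})K_{13}$, $C=(\mu_{3}-\mu_{2})K_{23}$. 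Each component does carry the right factor $v_{i}$, but the three residual scalars are \emph{different} quadratic forms in $(x,y,z)$. When $\lambda_{1}>\lambda_{2}>\lambda_{3}$ the numbers $A^{2},B^{2},C^{2}$ are pairwise distinct, so for instance $V=\tfrac{1}{\sqrt{3}}(e_{1}+e_{2}+e_{3})$ gives $C(V)$ \emph{not} collinear to $V$. Your test case $V\propto e_{1}+e_{2}$ succeeds only because $z=0$ kills the $B$- and $C$-terms and the surviving scalars $A^{2}y^{2}$ and $A^{2}x^{2}$ happen to agree; the ``general case'' is not similar. The polynomial identities $v_{j}\,C(V)_{1}=v_{1}\,C(V)_{j}$ you propose to check simply do not hold identically, so this route cannot prove the equivalence uniformly in $V$.

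What does work is exactly your ``alternative'', and it is the paper's argument: solve the algebraic system displayed immediately before the theorem to obtain the four-case list of left-invariant biharmonic unit sections, and then compare with Table~2 of \cite{MarkUra14}, where the left-invariant biharmonic unit vector fields on $SU(2)$ are already classified. The two lists coincide, which gives the final assertion. (A posteriori, the formula above shows that $C(V)$ \emph{is} collinear to $V$ at each of the listed critical points---always $\pm e_{i}$ or in a coordinate plane with $v_{i}^{2}=v_{j}^{2}$, or with the relevant $A,B,C$ vanishing---which explains the coincidence but does not, by itself, supply the converse direction.)
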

\begin{exam}
We consider the solvable Lie group $Sol_{3}$  equipped with the metric $g_{Sol}=e^{2z}dx^{2}+e^{-2z}dy^{2}+dz^{2}$, where $(x, y, z)$ are the standard coordinates on $\mathbb{R}^{3}$. We can easily check that the triple $\{e_{1}=e^{-z}\frac{\partial}{\partial x}, e_{2}=e^{z}\frac{\partial}{\partial y}, e_{3}=\frac{\partial}{\partial z}\}$ constitutes an orthonormal frame field on $Sol_{3}$.
\end{exam}
With respect to this orthonormal frame field, the Levi-Civita connection can easily be computed as (\cite{OUWang11}):
\begin{equation}\label{Eq: LeviCivita connection Sol}
\begin{array}{lcr}
\nabla_{e_{1}}e_{1}=-e_{3}, & \nabla_{e_{1}}e_{2}=0, & \nabla_{e_{1}}e_{3}=e_{1}, \\
\nabla_{e_{2}}e_{1}=0, & \nabla_{e_{2}}e_{2}=e_{3}, & \nabla_{e_{2}}e_{3}=-e_{2},\\
\nabla_{e_{3}}e_{1}=0, & \nabla_{e_{3}}e_{2}=0, & \nabla_{e_{3}}e_{3}=0.\\
\end{array}
\end{equation}
Let $V=x e_{1}+y e_{2}+z e_{3}$ be a left-invariant unit vector field on $Sol_{3}$. By using (\ref{Eq: LeviCivita connection Sol}), we calculate
\begin{equation*}
\begin{array}{lcr}
\bar{\Delta}V=x e_{1}+y e_{2}+2z e_{3}, & g_{Sol}(\bar{\Delta}V, V)=x^{2}+y^{2}+2z^{2}=1+z^{2}, \\
\bar{\Delta}\bar{\Delta}V=x e_{1}+y e_{2}+4z e_{3}.
\end{array}
\end{equation*}
As a consequence, $V$ is a harmonic unit section, equivalently, $\bar{\Delta}V=g_{Sol}(\bar{\Delta}V, V)V$ if and only if
\begin{equation*}
\begin{array}{lccr}
x z^{2}=0,  & y z^{2}=0, & z(1-z^{2})=0, & x^{2}+y^{2}+z^{2}=1.
\end{array}
\end{equation*}
$V$ is a biharmonic unit section if and only if there exists a constant $\lambda$ such that
\begin{equation*}
\begin{array}{lccr}
x(1+2z^{2}+\lambda)=0, & y(1+2z^{2}+\lambda)=0, & z(\lambda+4z^{2})=0, & x^{2}+y^{2}+z^{2}=1.
\end{array}
\end{equation*}
Summarizing, we obtain
\begin{thm}
Let $V=x e_{1}+y e_{2}+z e_{3}$ be a left-invariant unit vector field on $Sol_{3}$. Then,
\begin{enumerate}
\item $V$ is a harmonic unit section if and only if either $V=\pm e_{3}$ or $V\in e_{3}^{\perp}$.
\item $V$ is a non-harmonic biharmonic unit section if and only if its coordinates satisfy the circle equations: $\{z=\frac{1}{\sqrt{2}}, x^{2}+y^{2}=\frac{1}{2}\}$ and $\{z=-\frac{1}{\sqrt{2}}, x^{2}+y^{2}=\frac{1}{2}\}$.
\end{enumerate}
\end{thm}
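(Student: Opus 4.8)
The plan is to solve, by elementary case analysis, the two algebraic systems already obtained above by inserting the left-invariant vectors $\bar\Delta V=xe_{1}+ye_{2}+2ze_{3}$, $\bar\Delta\bar\Delta V=xe_{1}+ye_{2}+4ze_{3}$ and the scalar $A:=g_{Sol}(\bar\Delta V,V)=1+z^{2}$ into the harmonic and the biharmonic unit section conditions. The one simplification worth recording is that, since $V$ is left-invariant, $A$ is a constant, so $\bar\Delta$ commutes with multiplication by $A$ and the quantity in \eqref{Eq: biharmonic unit section}, namely $\bar\Delta[\bar\Delta V-A V]-A\bar\Delta V$, collapses to $\bar\Delta\bar\Delta V-2A\,\bar\Delta V$. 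Comparing coefficients of $e_{1},e_{2},e_{3}$ then turns the harmonic-section condition $\bar\Delta V=AV$ into $xz^{2}=0$, $yz^{2}=0$, $z(1-z^{2})=0$, $x^{2}+y^{2}+z^{2}=1$, and the biharmonic-section condition ($\bar\Delta\bar\Delta V-2A\,\bar\Delta V$ collinear to $V$) into the existence of $\lambda\in\Real$ with $x(1+2z^{2}+\lambda)=0$, $y(1+2z^{2}+\lambda)=0$, $z(4z^{2}+\lambda)=0$ and $x^{2}+y^{2}+z^{2}=1$. These are precisely the two systems displayed in the text.

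For (1) I would split on whether $z=0$. If $z=0$, the first three equations hold identically and only $x^{2}+y^{2}=1$ survives, which gives the unit vectors of $e_{3}^{\perp}$. If $z\neq0$, then $z^{2}\neq0$ forces $x=y=0$, hence $z^{2}=1$ and $V=\pm e_{3}$ (with $z(1-z^{2})=0$ then automatic). These two families are exactly the harmonic unit sections, as claimed.

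For (2) I would run three mutually exclusive cases. If $z=0$, the third equation is automatic and, since $x^{2}+y^{2}=1$ forces $(x,y)\neq(0,0)$, the first two equations give $\lambda=-1$; the solutions are then the unit vectors of $e_{3}^{\perp}$, which are harmonic by (1). If $z\neq0$ and $x=y=0$, then $z^{2}=1$ and $V=\pm e_{3}$, again harmonic by (1). If $z\neq0$ and $(x,y)\neq(0,0)$, then $z(4z^{2}+\lambda)=0$ gives $\lambda=-4z^{2}$, while at least one of the first two equations gives $1+2z^{2}+\lambda=0$; eliminating $\lambda$ yields $1-2z^{2}=0$, i.e. $z=\pm\tfrac{1}{\sqrt{2}}$, whence $x^{2}+y^{2}=1-z^{2}=\tfrac12$. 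This is exactly the pair of circles in the statement. To finish, I would note that these circles carry no harmonic section: on them $z^{2}=\tfrac12\neq0$, so $xz^{2}=yz^{2}=0$ would force $x=y=0$, contradicting $x^{2}+y^{2}=\tfrac12$. Hence the non-harmonic biharmonic unit sections are exactly the two circles, and (2) follows.

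The argument is entirely elementary, with no analytic obstacle; the only point requiring care is the bookkeeping in (2), since both $V=\pm e_{3}$ and every unit vector of $e_{3}^{\perp}$ solve the biharmonic system but are harmonic, and so must be discarded when one isolates the \emph{non-harmonic} biharmonic unit sections.
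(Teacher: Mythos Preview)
Your proposal is correct and follows exactly the paper's approach: the paper derives the same two algebraic systems from the computed $\bar\Delta V$, $\bar\Delta\bar\Delta V$ and $A=1+z^{2}$, and then simply writes ``Summarizing, we obtain'' before stating the theorem. Your case analysis for both parts is precisely the elementary verification that the paper leaves to the reader, and your bookkeeping in (2), distinguishing the harmonic solutions from the genuinely non-harmonic biharmonic ones, is the only point needing care---which you handle correctly.
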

\begin{exam}
Let $(H^{n}, g)$ be the $n$-dimensional Poincar\'{e} half space, i.e.
\begin{equation*}
H^{n}=\{(y_{1}, y_{2}, \ldots, y_{n})\in \Real^{n}|y_{1}>0\}
\end{equation*}
and $g$ is the Riemannian metric given by
\begin{equation*}
g=(c y_{1})^{-2}\sum_{i=1}^{n}(dy_{i})^{2}.
\end{equation*}
\end{exam}
It has constant curvature $-c^{2}$. By using the theory of homogeneous structures, we use the representation of $H^{n}$ as the subgroup $G$ of $GL(n, \Real)$ of the form (\cite{MedrGonVanh01})
\begin{equation*}
\begin{array}{c}
a
\end{array}
=\left (
\begin{array}{ccccc}
e^{x_{1}} & 0 & \ldots  & 0  & x_{2}\\
0 & e^{x_{1}} & \ldots & 0 &  x_{3}\\
. & . & .  & . & .\\
. & . & . & . & . \\
0 & . & . & e^{x_{1}} & x_{n}\\
0 & . & . & 0 & 1
\end{array}\right )
\end{equation*}
It is a solvable Lie group which is a semi-direct product of the multiplicative $\Real_{0}^{+}=\{x\in\Real|x>0\}$ and the additive group $\Real^{n-1}$. The left-invariant vector fields
\begin{equation*}
\begin{array}{lcr}
X_{1}=c\frac{\partial}{\partial x_{1}},  & X_{i}=c e^{x_{1}}\frac{\partial}{\partial x_{i}},  & (2\leq i \leq n)
\end{array}
\end{equation*}
constitute a basis of the Lie algebra. On $G$, we consider the Riemannian metric $< , >$ for which these vector fields form an orthonormal basis at each point. Then, $G$ is isometric to the Poincar\'{e} half-space $(H^{n}, g)$. The Levi-Civita connection with respect to this metric is determined by
\begin{equation}\label{Eq: LeviCivita connection Hn}
\begin{array}{lcr}
\nabla_{X_{i}}X_{i}=cX_{1}, & \nabla_{X_{i}}X_{1}=-cX_{i}  & (2\leq i \leq n)
\end{array}
\end{equation}
and the remaining covariant derivatives vanish. Let $V=V_{1}X_{1}+\sum_{i=2}^{n}V_{i}X_{i}$ be a left-invariant unit vector field on $G$. Using (\ref{Eq: LeviCivita connection Hn}), we get (\cite{MedrGonVanh01})
\begin{equation}\label{Eq: div and Laplacian Hn}
\begin{array}{lccr}
\nabla_{X_{1}}V=0, & \nabla_{X_{i}}V=-c V_{1} X_{i}+c V_{i} X_{1}, & \bar{\Delta}X_{1}=c^{2}(n-1)X_{1}, & \bar{\Delta}X_{i}=c^{2} X_{i},
\end{array}
\end{equation}
for $2\leq i \leq n$. As a consequence, we obtain
\begin{eqnarray}\label{Eq: S(V) and DV Hn}
\bar{\Delta}V&=&V_{1}\bar{\Delta}X_{1}+\sum_{i=2}^{n}V_{i}\bar{\Delta}X_{i}=c^{2}(n-1)V_{1}X_{1}+c^{2}\sum_{i=2}^{n}V_{i}X_{i},\nonumber\\
<\bar{\Delta}V, V>&=&c^{2}(n-1)V_{1}^{2}+c^{2}\sum_{i=2}^{n}V_{i}^{2}=c^{2}[(n-1)V_{1}^{2}+1-V_{1}^{2}]=c^{2}[1+(n-2)V_{1}^{2}],\nonumber\\
\bar{\Delta}\bar{\Delta}V&=&c^{4}(n-1)^{2}V_{1}X_{1}+c^{4}\sum_{i=2}^{n}V_{i}X_{i},\nonumber\\
S(V)&=&\sum_{i=1}^{n}R(\nabla_{X_{i}}V, V)X_{i}=(-c^{2})\sum_{i=2}^{n}\Big\{<V, X_{i}>\nabla_{X_{i}}V-<\nabla_{X_{i}}V, X_{i}>V\Big\}\nonumber\\
&&=(-c^{3})\Big\{[(n-2)V_{1}^{2}+1]X_{1}+(n-2)V_{1}\sum_{i=2}^{n}V_{i}X_{i}\Big\},
\end{eqnarray}
where we have used that $(H^{n}, < , >)$ is a space of constant curvature $-c^{2}$ and $V_{1}^{2}+\sum_{i=2}^{n}V_{i}^{2}=1$. For $n=2$ every left-invariant unit vector field on $(H^{n}, < , >)$ is harmonic unit section and for $n>2$, $V$ is a harmonic unit section if and only if either $V=\pm X_{1}$ or $V\in \{X_{1}\}^{\perp}$, where $X_{1}^{\perp}$ denotes the distribution defined by the vector fields orthogonal to $X_{1}$ (\cite{MedrGonVanh01}). Combining (\ref{Eq: div and Laplacian Hn}) and (\ref{Eq: S(V) and DV Hn}), we calculate
\begin{equation*}
\begin{array}{lcr}
\nabla_{X_{1}}S(V)=0, & \nabla_{X_{i}}S(V)=(-c^{4})\Big\{(n-2)V_{1}V_{i}X_{1}-[(n-2)V_{1}^{2}+1]X_{i}\Big\}, & (2\leq i \leq n)
\end{array}
\end{equation*}
and
\begin{eqnarray}\label{Eq: curvature term biharmonic unit vector}
\sum_{i=2}^{n}R(X_{i}, \nabla_{X_{i}}S(V))V&=&(-c^{4})(n-2)\sum_{i=2}^{n}V_{1}V_{i}R(X_{i}, X_{1})V\nonumber\\
&=&c^{6}(n-2)\sum_{i=2}^{n}V_{1}V_{i}\Big\{<V, X_{1}>X_{i}-<V, X_{i}>X_{1}\Big\}\nonumber\\
&=&c^{6}(n-2)\Big\{-V_{1}(1-V_{1}^{2})X_{1}+V_{1}^{2}\sum_{i=2}^{n}V_{i}X_{i}\Big\},\nonumber\\
\sum_{i=2}^{n}R(X_{i}, S(V))\nabla_{X_{i}}V&=&c\sum_{i=2}^{n}V_{i}R(X_{i}, S(V))X_{1}-cV_{1}\sum_{i=2}^{n}R(X_{i}, S(V))X_{i}\nonumber\\
&=&c^{6}\Big\{(n-1)+(n-1)(n-2)V_{1}^{2}\Big\}V_{1}X_{1}+c^{6}\sum_{i=2}^{n}\Big\{1+(n-1)(n-2)V_{1}^{2}\Big\}V_{i}X_{i}.
\end{eqnarray}
By using (\ref{Eq: biharmonic unit section}), (\ref{Eq: biharmonic unit vector field}), (\ref{Eq: S(V) and DV Hn}) and (\ref{Eq: curvature term biharmonic unit vector}), we conclude that $V$ is a biharmonic unit section if and only if there exists a constant $\lambda$ such that
\begin{equation*}
\begin{array}{lcr}
c^{4}\Big\{(n-1)(n-3)-2(n-1)(n-2)V_{1}^{2}\Big\}V_{1}=\lambda V_{1}, & -c^{4}\Big\{1+2(n-2)V_{1}^{2}\Big\}V_{i}=\lambda V_{i}, & (2\leq i \leq n)
\end{array}
\end{equation*}
Furthermore, $V$ is a biharmonic unit vector field if and only if there exists a constant $\mu$ such that
\begin{eqnarray}
c^{4}\Big\{(n-1)(n-3)-2(n-1)(n-2)V_{1}^{2}\Big\}V_{1}+c^{6}\Big\{n+(n-2)(2n-1)V_{1}^{2}\Big\}V_{1}&=&\mu V_{1},\nonumber\\
-c^{4}\Big\{1+2(n-2)V_{1}^{2}\Big\}V_{i}+c^{6}\Big\{2+(n-2)(2n-1)V_{1}^{2}\Big\}V_{i}&=&\mu V_{i},\nonumber
\end{eqnarray}
for $2\leq i \leq n$.
Summarizing, we yield
\begin{thm}
Let $H^{n}\cong G$ be the $n$-dimensional Poincar\'{e} half-space. For $n=2$, every left-invariant unit vector field is biharmonic unit section and biharmonic unit vector field. For $n>2$, we have
\begin{enumerate}
\item The left-invariant unit vector field $V=V_{1}X_{1}+\sum_{i=2}^{n}V_{i}X_{i}$ is a non-harmonic biharmonic unit section if and only if its coordinates satisfy the equations of the $(n-2)$-hyperspheres of $\Real^{n}$: $C_{1}=\{V_{1}=\frac{1}{\sqrt{2}}, \sum_{i=2}^{n}V_{i}^{2}=\frac{1}{2}\}$ and $C_{2}=\{V_{1}=-\frac{1}{\sqrt{2}}, \sum_{i=2}^{n}V_{i}^{2}=\frac{1}{2}\}$.
\item The set of left-invariant biharmonic unit vector field is: $V=\pm X_{1}$, $V\in X_{1}^{\perp}$ and the equations of the $(n-2)$-hyperspheres of $\Real^{n}$: $C_{3}=\{V_{1}=\sqrt{\frac{c^{2}+n-2}{2(n-2)}}, \sum_{i=2}^{n}V_{i}^{2}=\frac{n-2-c^{2}}{2(n-2)}\}$ and $C_{4}=\{V_{1}=-\sqrt{\frac{c^{2}+n-2}{2(n-2)}}, \sum_{i=2}^{n}V_{i}^{2}=\frac{n-2-c^{2}}{2(n-2)}\}$ with the assumption $c^{2}<n-2$, where $X_{1}^{\perp}$ denotes the distribution defined by the vector fields orthogonal to $X_{1}$.
\end{enumerate}
\end{thm}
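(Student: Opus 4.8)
The proof is a direct analysis of the two scalar systems displayed just above the statement, which already encode the conditions ``$V$ is a biharmonic unit section'' and ``$V$ is a biharmonic unit vector field'' for $V = V_1 X_1 + \sum_{i=2}^n V_i X_i$ subject to $V_1^2 + \sum_{i=2}^n V_i^2 = 1$. The plan is to split into three cases according to the position of $V$ relative to the distinguished direction $X_1$: (a) $V_1 = 0$, i.e. $V \in X_1^\perp$; (b) $V_1 \ne 0$ and $V_i = 0$ for every $i \ge 2$, i.e. $V = \pm X_1$; and (c) $V_1 \ne 0$ together with $V_{i_0} \ne 0$ for some $i_0 \ge 2$. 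In cases (a) and (b) the equations indexed by $i \ge 2$ (resp. the single equation for $V_1$) pin down a unique admissible Lagrange multiplier and then hold identically, so every such $V$ qualifies; since these are exactly the harmonic unit sections recalled before the statement, they contribute no proper examples to part (1), but for part (2) one must still check separately that $\pm X_1$ and the fields of $X_1^\perp$ are biharmonic unit vector fields --- this is not automatic on a curved manifold --- which is done by exhibiting the multipliers $\mu = c^4(2c^2-1)$ (for $X_1^\perp$) and $\mu = c^4(n-1)^2(2c^2-1)$ (for $\pm X_1$).

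For part (1), in case (c) one eliminates $\lambda$ between the $X_1$-equation and any nonzero $X_i$-equation; the bracket multiplying $V_i$ is independent of $i$, so a single $\lambda$ serves all indices at once, and equating the two expressions for $\lambda$, using the identity $(n-1)(n-3)+1 = (n-2)^2$, collapses the system to $2(n-2)^2 V_1^2 = (n-2)^2$, i.e. $V_1^2 = \tfrac12$ and hence $\sum_{i\ge2}V_i^2 = \tfrac12$. This is precisely $C_1 \cup C_2$; conversely such $V$ satisfy the system with $\lambda = -c^4(n-1)$, and for $n>2$ they are non-harmonic because $\bar\Delta V$ collinear to $V$ would force $c^2(n-1) = c^2$. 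This proves (1).

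For part (2) one runs the same elimination on the biharmonic-unit-vector-field system. The decisive point is that the $c^6$-terms involve $V_1^2$ only through the same coefficient $(n-2)(2n-1)$ in both equations, so that coefficient cancels when the two expressions for $\mu$ are equated; what remains is $c^4(n-2)^2(1-2V_1^2) + c^6(n-2) = 0$, that is $V_1^2 = \tfrac{c^2+n-2}{2(n-2)}$ and $\sum_{i\ge2}V_i^2 = \tfrac{n-2-c^2}{2(n-2)}$. This describes a genuine mixed unit vector field exactly when the last expression is positive, i.e. $c^2 < n-2$, giving $C_3 \cup C_4$ under that hypothesis and nothing new when $c^2 \ge n-2$; conversely one verifies a constant $\mu$ exists. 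Combined with cases (a), (b), this is the full list in (2). Finally, for $n=2$ both systems reduce, upon substitution, to identities with the constant multipliers $\lambda = -c^4$ and $\mu = 2c^6-c^4$, so every left-invariant unit vector field is simultaneously a biharmonic unit section and a biharmonic unit vector field.

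The computation is essentially mechanical, and I do not anticipate a serious obstacle; the only points demanding a little care are (i) checking that one multiplier works for all the $X_i$-equations simultaneously (it does, by the $i$-independence of the relevant bracket), (ii) the clean cancellation of the $V_1^2$-term among the $c^6$-contributions in part (2), which is what makes the final relation linear in $V_1^2$, and (iii) the sign analysis $c^2 < n-2$ governing the existence of $C_3, C_4$.
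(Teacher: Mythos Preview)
Your proposal is correct and follows exactly the route the paper takes: the paper derives the two scalar systems you work from and then writes ``Summarizing, we yield'' the theorem, leaving the case analysis implicit, while you spell out precisely that analysis (the split into $V_1=0$, $V=\pm X_1$, and the mixed case, the elimination of the multiplier via $(n-1)(n-3)+1=(n-2)^2$, and the cancellation of the $c^6(n-2)(2n-1)V_1^2$ terms). Your explicit verification that $\pm X_1$ and $X_1^\perp$ satisfy the biharmonic unit vector field system, with the multipliers $\mu=c^4(n-1)^2(2c^2-1)$ and $\mu=c^4(2c^2-1)$, is a detail the paper omits but which is indeed needed, since harmonic unit sections are not automatically biharmonic unit vector fields on a non-flat manifold.
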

\begin{rem}
The notions ''harmonic unit section'' and ''harmonic unit vector fields'' are equivalent. On the other hand, the left-invariant unit vector fields of $H^{n}$ whose coordinates satisfy the equations of the hyperspheres $C_{1}$ and $C_{2}$ (resp. $C_{3}$ and $C_{4}$) are biharmonic unit sections (resp. biharmonic unit vector fields) which are not biharmonic unit vector fields (resp. biharmonic unit sections).
\end{rem}
\begin{rem}
Theorem 3.3 completes and corrects the statement of \cite[Theorem 3.1]{Mark09}, which is based on the incorrect \cite[Proposition 3.1]{Mark09}.
\end{rem}

\section{Conformal change of the metric and the tension field of a vector field}

In this section we derive the critical points of the bienergy functional (resp. vertical bienergy functional) for unit vector fields after a conformal change of the metric on the base manifold. We have
\begin{thm}
Let $(M^{n}, g)$ be a Riemannian manifold, $V$ a unit vector field on $M$ and $u \in C^{\infty}(M)$. We consider the conformal change of metric $\widetilde{g}=e^{2u}g$ and $\widetilde{V}=e^{-u}V$. Then,
\begin{eqnarray}\label{Eq: conformal change Delta V}
\widetilde{\bar{\Delta}}\widetilde{V}-\widetilde{g}(\widetilde{\bar{\Delta}}\widetilde{V}, \widetilde{V})\widetilde{V}&=&e^{-3u}\Big\{\bar{\Delta}V-g(\bar{\Delta}V, V)V+(2-n)\nabla_{\grad u}V-2\sum_{i=1}^{n-1}g(\grad u, \nabla_{E_{i}}V)E_{i}\nonumber\\
&&+[2\di V+(n-2)V(u)]\sum_{i=1}^{n-1}E_{i}(u)E_{i}\Big\}, \\
\hspace{2.5cm}\widetilde{\nabla}_{\widetilde{V}}\widetilde{V}-(\widetilde{\di} \widetilde{V})\tilde{V}&=&e^{-2u}\Big\{\nabla_{V}V-(\di V) V+(2-n)V(u)V - \grad u\Big\},\label{Eq: conformal change S(V)}
\end{eqnarray}
where $\{E_{1}, E_{2}, \ldots, E_{n-1}, E_{n}=V\}$ is an orthonormal frame field of $(M, g)$.
\end{thm}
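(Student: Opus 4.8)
The plan is to compute both formulas directly by tracking how all the relevant operators transform under the conformal change $\widetilde{g} = e^{2u}g$, using the frame $\{E_1,\dots,E_{n-1},E_n=V\}$ which is $g$-orthonormal, so that $\{\widetilde E_i = e^{-u}E_i\}$ is $\widetilde g$-orthonormal and $\widetilde V = e^{-u}V = \widetilde E_n$. First I would recall (or re-derive) the standard conformal change formula for the Levi-Civita connection,
\[
\widetilde\nabla_X Y = \nabla_X Y + X(u)Y + Y(u)X - g(X,Y)\grad u,
\]
where $\grad u$ is taken with respect to $g$; note $\widetilde{\grad}\, u = e^{-2u}\grad u$. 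From this I would assemble the two building blocks appearing on the left-hand sides: the horizontal-type term $\widetilde\nabla_{\widetilde V}\widetilde V - (\widetilde{\di}\,\widetilde V)\widetilde V$ and the vertical-type term $\widetilde{\bar\Delta}\widetilde V - \widetilde g(\widetilde{\bar\Delta}\widetilde V,\widetilde V)\widetilde V$. The first is the shorter computation: expand $\widetilde\nabla_{\widetilde V}\widetilde V$ with $\widetilde V = e^{-u}V$ using the connection formula and Leibniz, then compute $\widetilde{\di}\,\widetilde V = \sum_i \widetilde g(\widetilde\nabla_{\widetilde E_i}\widetilde V,\widetilde E_i)$; the scaling factor $e^{-2u}$ and the $(2-n)V(u)V - \grad u$ correction should drop out after the $V$-component contributions to $\widetilde\di$ and the $e^{-u}$ derivatives are collected.

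The substantive work is formula \eqref{Eq: conformal change Delta V}. Here I would first establish the conformal transformation of the rough Laplacian $\bar\Delta = -\tr\nabla^2$ acting on a vector field. The cleanest route is: write $\widetilde{\bar\Delta}\widetilde V = -\sum_{i=1}^n\big(\widetilde\nabla_{\widetilde E_i}\widetilde\nabla_{\widetilde E_i}\widetilde V - \widetilde\nabla_{\widetilde\nabla_{\widetilde E_i}\widetilde E_i}\widetilde V\big)$, substitute $\widetilde E_i = e^{-u}E_i$ and $\widetilde V = e^{-u}V$, expand every $\widetilde\nabla$ via the connection formula, and carefully collect terms by homogeneity in $u$. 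One must be disciplined about the derivatives hitting the exponential prefactors: terms with two derivatives of $e^{-u}$ produce $|\grad u|^2$ and $\Delta u$ contributions, terms with one derivative produce the $\nabla_{\grad u}V$, $g(\grad u,\nabla_{E_i}V)E_i$, $V(u)$ and $\di V$ contributions, and the zeroth-order piece reproduces $e^{-3u}\bar\Delta V$. The key simplification is that after forming the combination $\widetilde{\bar\Delta}\widetilde V - \widetilde g(\widetilde{\bar\Delta}\widetilde V,\widetilde V)\widetilde V$ — i.e. projecting off the $\widetilde V$-direction, equivalently the $E_n$-direction — all the purely scalar terms proportional to $V$ (the $\Delta u$, $|\grad u|^2$, and similar contributions) cancel against their counterparts in $\widetilde g(\widetilde{\bar\Delta}\widetilde V,\widetilde V)\widetilde V$, leaving only the four stated terms plus the $\bar\Delta V - g(\bar\Delta V,V)V$ piece. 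I expect to use the orthonormality $g(E_i,E_j)=\delta_{ij}$, $g(E_i,V)=0$ repeatedly, and to split sums $\sum_{i=1}^n = \sum_{i=1}^{n-1} + (\text{term }i=n)$ since the $i=n$ term $E_n = V$ is special.

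The main obstacle is bookkeeping: keeping track of which derivatives act on which exponential factor, and correctly handling the $\widetilde\nabla_{\widetilde E_i}\widetilde E_i$ terms (the trace of the connection coefficients of the rescaled frame, which themselves pick up $u$-derivative corrections), so that the coefficients $(2-n)$, $-2$, and $[2\di V + (n-2)V(u)]$ come out exactly right. A useful consistency check along the way: the terms $-2\sum_i g(\grad u,\nabla_{E_i}V)E_i$ and $[2\di V + \dots]\sum_i E_i(u)E_i$ together must be what survives from the cross-terms between one derivative on the prefactor and the $\nabla V$ structure, and one can verify the $n=2$ specialization against the torus computations used later in Section 5. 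I would also double-check that $\widetilde g(\widetilde{\bar\Delta}\widetilde V,\widetilde V) = e^{-4u} \cdot (\text{the }E_n\text{-component of the braced expression})$, so that the overall $e^{-3u}$ is uniform; this is the place where sign errors are most likely, and where I would slow down.
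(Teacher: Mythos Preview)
Your plan is correct and essentially identical to the paper's own proof: the paper also fixes the frame $\{\widetilde E_i=e^{-u}E_i\}$, invokes the standard conformal connection formula $\widetilde\nabla_X Y=\nabla_X Y+X(u)Y+Y(u)X-g(X,Y)\grad u$, expands $\sum_i\widetilde\nabla_{\widetilde E_i}\widetilde\nabla_{\widetilde E_i}\widetilde V$, $\sum_i\widetilde\nabla_{\widetilde E_i}\widetilde E_i$, and $\sum_i\widetilde\nabla_{\widetilde\nabla_{\widetilde E_i}\widetilde E_i}\widetilde V$ to obtain $\widetilde{\bar\Delta}\widetilde V$ and its $\widetilde V$-component, then specializes to $E_n=V$ and uses $E_i[V(u)]=g(\grad u,\nabla_{E_i}V)+g(\nabla_{E_i}\grad u,V)$ to arrive at \eqref{Eq: conformal change Delta V}; the second identity is handled exactly as you outline. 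One small correction to your sanity check: since $\widetilde g=e^{2u}g$ and $\widetilde V=e^{-u}V$, the inner product $\widetilde g(\widetilde{\bar\Delta}\widetilde V,\widetilde V)$ scales as $e^{-2u}$ times the $V$-component of the bracketed expression (not $e^{-4u}$), which is precisely what makes the overall $e^{-3u}$ prefactor consistent after subtracting $\widetilde g(\widetilde{\bar\Delta}\widetilde V,\widetilde V)\widetilde V$.
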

\begin{proof}
We consider an arbitrary orthonormal frame field $\{E_{1}, E_{2}, \ldots, E_{n}\}$ of $(M, g)$. We set $\widetilde{E}_{i}=e^{-u}E_{i}, (1\leq i \leq n)$. Then, $\{\widetilde{E}_{1}, \widetilde{E}_{2}, \ldots, \widetilde{E}_{n}\}$ is an orthonormal frame field of $(M, \widetilde{g})$. The Levi-Civita connections $\nabla$ and $\widetilde{\nabla}$ of $(M, g)$ and $(M, \widetilde{g})$ are related by (\cite[page 107]{DragomirPerro11}):
\begin{equation}\label{Eq:Levicivitaconformal}
\widetilde{\nabla}_{Y}{Z}=\nabla_{Y}Z+Y(u)Z+Z(u)Y-g(Y, Z)\grad u
\end{equation}
for every $Y, Z\in \X(M)$. Therefore
\begin{equation}\label{Eq:tildenablaV}
\widetilde{\nabla}_{\widetilde{E}_{i}}\widetilde{V}=e^{-2u}\Big\{\nabla_{E_{i}}V+V(u)E_{i}-g(V, E_{i})\grad u\Big\}.
\end{equation}
By using (\ref{Eq:Levicivitaconformal}) and (\ref{Eq:tildenablaV}), we get
\begin{eqnarray}
\sum_{i=1}^{n}\widetilde{\nabla}_{\widetilde{E_{i}}}\widetilde{\nabla}_{\widetilde{E_{i}}}\widetilde{V}&=&e^{-3u}\Big\{-\nabla_{\grad u}V+\nabla_{E_{i}}\nabla_{E_{i}}V-2\di V\grad u + \sum_{i=1}^{n}(\nabla_{E_{i}}V)(u)E_{i}+\sum_{i=1}^{n}E_{i}[V(u)]E_{i}\nonumber\\
&&+V(u)\sum_{i=1}^{n}\nabla_{E_{i}}E_{i}+(2-n)V(u)\sum_{i=1}^{n}E_{i}(u)E_{i}-\sum_{i=1}^{n}g(V, \nabla_{E_{i}}E_{i})\grad u\nonumber\\
&&-\nabla_{V}\grad u-\|\grad u\|^{2}V\Big\},\nonumber\\
\sum_{i=1}^{n}\widetilde{\nabla}_{\widetilde{E}_{i}}\widetilde{E}_{i}&=&e^{-2u}\Big\{(1-n)\grad u+\sum_{i=1}^{n}\nabla_{E_{i}}E_{i}\Big\},\nonumber\\
\sum_{i=1}^{n}\widetilde{\nabla}_{\widetilde{\nabla}_{\widetilde{E}_{i}}\widetilde{E}_{i}}\widetilde{V}&=&e^{-3u}\Big\{(1-n)\nabla_{\grad u}V+\sum_{i=1}^{n}\nabla_{\nabla_{E_{i}}E_{i}}V+V(u)\sum_{i=1}^{n}\nabla_{E_{i}}E_{i}-\sum_{i=1}^{n}g(\nabla_{E_{i}}E_{i}, V)\grad u\Big\}.\nonumber
\end{eqnarray}
As a consequence, we obtain
\begin{eqnarray}\label{eq: tilde Delta V}
\widetilde{\bar{\Delta}}\widetilde{V}&=&\sum_{i=1}^{n}\widetilde{\nabla}_{\widetilde{\nabla}_{\widetilde{E}_{i}}\widetilde{E}_{i}}\widetilde{V}-
\sum_{i=1}^{n}\widetilde{\nabla}_{\widetilde{E_{i}}}\widetilde{\nabla}_{\widetilde{E_{i}}}\widetilde{V}\nonumber\\
&=&e^{-3u}\Big\{\bar{\Delta}V+(2-n)\nabla_{\grad u}V+\nabla_{V}\grad u+2\di V \grad u-\sum_{i=1}^{n}(\nabla_{E_{i}}V)(u)E_{i}\nonumber\\
&&+(n-2)V(u)\grad u-\sum_{i=1}^{n}E_{i}[V(u)]E_{i}+\|\grad u\|^{2}V\Big\},\nonumber\\
\widetilde{g}(\widetilde{\bar{\Delta}}\widetilde{V}, \widetilde{V})&=&e^{-2u}\Big\{g(\bar{\Delta}V, V)+g(\nabla_{V}\grad u, V)+2\di V V(u)-(\nabla_{V}V)(u)+(n-2)V(u)^{2}\nonumber\\
&&-V V(u)+\|\grad u\|^{2}\Big\}.
\end{eqnarray}
In the sequel, we consider $E_{n}=V$. Using (\ref{eq: tilde Delta V}) and taking into account $E_{i}[V(u)]=E_{i}g(V, \grad u)=g(\grad u, \nabla_{E_{i}}V)+g(\nabla_{V}\grad u, E_{i})$, we get (\ref{Eq: conformal change Delta V}). By using (\ref{Eq:Levicivitaconformal}) and (\ref{Eq:tildenablaV}), we get
\begin{eqnarray}
\widetilde{\nabla}_{\widetilde{V}}\widetilde{V}&=&\nabla_{\widetilde{V}}\widetilde{V}+2\widetilde{V}(u)\widetilde{V}-g(\widetilde{V}, \widetilde{V})\grad u=e^{-2u}\Big\{\nabla_{V}V+V(u)V-\grad u\Big\},\nonumber\\
\widetilde{\di} \widetilde{V}&=&\sum_{i=1}^{n-1}\widetilde{g}(\widetilde{\nabla}_{\widetilde{E}_{i}}\widetilde{V}, \widetilde{E}_{i})=e^{-u}\Big\{\di V+(n-1)V(u)\Big\},\nonumber
\end{eqnarray}
from which (\ref{Eq: conformal change S(V)}) is easily deduced.
\end{proof}
\begin{cor}
Let $(M^{2}, g)$ be a 2-dimensional Riemannian manifold, $V$ a unit vector field on $M$ and $u \in C^{\infty}(M)$. We consider the conformal change of metric $\widetilde{g}=e^{2u}g$ and $\widetilde{V}=e^{-u}V$. Then,
\begin{equation}\label{Eq: tilde Delta V 2 dim}
\widetilde{\bar{\Delta}}\widetilde{V}-\widetilde{g}(\widetilde{\bar{\Delta}}\widetilde{V}, \widetilde{V})\widetilde{V}= e^{-3u}\Big\{\bar{\Delta}V-g(\bar{\Delta}V, V)V\Big\}.
\end{equation}
As a consequence, $V$ is a harmonic unit section on $(M^{2}, g)$ if and only if $\widetilde{V}$ is a harmonic unit section on $(M^{2}, \widetilde{g})$.
\end{cor}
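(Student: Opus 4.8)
The plan is to specialize the general formula (\ref{Eq: conformal change Delta V}) from Theorem 4.1 to the case $n=2$ and show that the three extra terms involving $\grad u$ collapse. Setting $n=2$ in (\ref{Eq: conformal change Delta V}), the term $(2-n)\nabla_{\grad u}V$ vanishes outright, and the term $[2\di V+(n-2)V(u)]\sum_{i}E_{i}(u)E_{i}$ reduces to $2(\di V)\sum_{i=1}^{1}E_{i}(u)E_{i}$, since $n-2=0$. The only remaining obstruction is the term $-2\sum_{i=1}^{n-1}g(\grad u,\nabla_{E_{i}}V)E_{i}$, which for $n=2$ is $-2g(\grad u,\nabla_{E_{1}}V)E_{1}$. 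So after setting $n=2$ the right-hand side becomes $e^{-3u}\{\bar{\Delta}V-g(\bar{\Delta}V,V)V-2g(\grad u,\nabla_{E_{1}}V)E_{1}+2(\di V)E_{1}(u)E_{1}\}$, and the whole task is to prove that the last two terms cancel, i.e. that $g(\grad u,\nabla_{E_{1}}V)=(\di V)E_{1}(u)$.

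The key observation is that in dimension two, with the orthonormal frame $\{E_{1},E_{2}=V\}$, the vector field $V$ has unit length, so $g(\nabla_{E_{1}}V,V)=0$ and hence $\nabla_{E_{1}}V$ is proportional to $E_{1}$; write $\nabla_{E_{1}}V=a E_{1}$ for a function $a$. Similarly $\nabla_{V}V=b E_{1}$ for a function $b$. Then $\di V=\sum_{i}g(\nabla_{E_{i}}V,E_{i})=g(\nabla_{E_{1}}V,E_{1})+g(\nabla_{V}V,V)=a+0=a$. On the other hand $g(\grad u,\nabla_{E_{1}}V)=g(\grad u,aE_{1})=a\,g(\grad u,E_{1})=a E_{1}(u)=(\di V)E_{1}(u)$. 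This is exactly the required identity, so the two offending terms cancel and (\ref{Eq: tilde Delta V 2 dim}) follows. I would present this as a short direct computation: invoke Theorem 4.1 with $n=2$, then use $|V|=1$ to write $\nabla_{E_{1}}V=a E_{1}$, read off $\di V=a$, and substitute.

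For the ``as a consequence'' clause, recall that by definition $V$ is a harmonic unit section on $(M^{2},g)$ if and only if $\bar{\Delta}V$ is collinear to $V$, equivalently $\bar{\Delta}V-g(\bar{\Delta}V,V)V=0$. By (\ref{Eq: tilde Delta V 2 dim}), $\widetilde{\bar{\Delta}}\widetilde{V}-\widetilde{g}(\widetilde{\bar{\Delta}}\widetilde{V},\widetilde{V})\widetilde{V}=e^{-3u}\{\bar{\Delta}V-g(\bar{\Delta}V,V)V\}$, and since $e^{-3u}>0$ nowhere vanishes, the left-hand side vanishes if and only if the right-hand side does. Hence $\widetilde{V}$ is a harmonic unit section on $(M^{2},\widetilde{g})$ if and only if $V$ is a harmonic unit section on $(M^{2},g)$.

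I expect the main (and only real) obstacle to be the cancellation identity $g(\grad u,\nabla_{E_{1}}V)=(\di V)E_{1}(u)$; everything else is bookkeeping with the $n=2$ substitution. The point to be careful about is that this identity is genuinely two-dimensional — it rests on the fact that $\nabla_{E_{1}}V$ has no component along $V$ when $|V|=1$, so that the orthogonal complement of $V$ is one-dimensional and spanned by $E_{1}$, forcing $\nabla_{E_{1}}V \parallel E_{1}$. In higher dimensions $\nabla_{E_{i}}V$ can have components along the other $E_{j}$'s and no such collapse occurs, which is consistent with the paper's later remark that conformal invariance fails for the biharmonic (as opposed to harmonic) problem.
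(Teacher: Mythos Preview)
Your proposal is correct and follows essentially the same route as the paper: specialize Theorem~4.1 to $n=2$, use the adapted orthonormal frame $\{E_{1},V\}$ together with $|V|=1$ to force $\nabla_{E_{1}}V$ to be a multiple of $E_{1}$, identify that multiple as $\di V$, and observe the cancellation of the two $\grad u$-terms. The paper's proof is terser (it just records the full connection table~(\ref{Eq: connection M2 V}) and says ``by (\ref{Eq: conformal change Delta V}) and (\ref{Eq: connection M2 V})''), but the content is the same; your explicit isolation of the identity $g(\grad u,\nabla_{E_{1}}V)=(\di V)\,E_{1}(u)$ is exactly the computation hidden behind that sentence.
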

\begin{proof}
We consider the local orthonormal frame field $\{E_{1}, V\}$. The Levi-Civita connection of $(M^{2}, g)$ is given by
\begin{equation}\label{Eq: connection M2 V}
\begin{array}{lcr}
\nabla_{V}V=a E_{1}, & \nabla_{E_{1}}E_{1}=-b V,\\
\nabla_{E_{1}}V=b E_{1}, & \nabla_{V}E_{1}=-a V, & [V, E_{1}]=-a V-b E_{1},
\end{array}
\end{equation}
for some functions $a, b$ on $M$. We easily notice that $\di V=b$ and $\di E_{1}=-a$. By using (\ref{Eq: conformal change Delta V}) and (\ref{Eq: connection M2 V}), we get (\ref{Eq: tilde Delta V 2 dim}). The second part of the Corollary follows from by the definition of harmonic unit sections.
\end{proof}
\begin{rem}
We should point out that conformal invariance of harmonic unit sections (Corollary 4.2) is of a different nature from the conformal invariance of harmonic maps on surfaces.
\end{rem}
We assume now that $(M, g)$ is a 2-dimensional compact Riemannian manifold. By Corollary 4.2 and $v_{\widetilde{g}}=e^{2u}v_{g}$(\cite[page 107]{DragomirPerro11}), we get
\begin{equation*}
E_{2}^{v}(V)=\frac{1}{2}\int_{M}\|\bar{\Delta}V-g(\bar{\Delta}V, V)V\|_{g}^{2}v_{g}=\frac{1}{2}\int_{M}e^{2u}\|\widetilde{\bar{\Delta}}\widetilde{V}-\widetilde{g}(\widetilde{\bar{\Delta}}\widetilde{V}, \widetilde{V})\widetilde{V}\|_{\widetilde{g}}^{2}v_{\widetilde{g}}.
\end{equation*}
In the sequel, we derive a condition on $\tilde{V}$ for $V$ to be a biharmonic unit section. In particular, we have
\begin{thm}
Let $(M^{2}, g)$ be a 2-dimensional compact Riemannian manifold, $V$ a unit vector field on $M$ and $u \in C^{\infty}(M)$. We consider the conformal change of metric $\widetilde{g}=e^{2u}g$ and $\widetilde{V}=e^{-u}V$. Then, $V$ is a biharmonic unit section if and only if,
\begin{equation}\label{Eq: conformal change biharmonic unit section}
\widetilde{\bar{\Delta}}\Big(e^{2u}
(\widetilde{\bar{\Delta}}\widetilde{V}-\widetilde{g}(\widetilde{\bar{\Delta}}\widetilde{V}, \widetilde{V})\widetilde{V})\Big)-e^{2u}\widetilde{g}(\widetilde{\bar{\Delta}}\widetilde{V}, \widetilde{V})\widetilde{\bar{\Delta}}\widetilde{V}
\end{equation}
is collinear to $V$.
\end{thm}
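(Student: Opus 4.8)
The plan is to start from the intrinsic characterization of biharmonic unit sections given in Theorem~2.2, namely that $V$ is a biharmonic unit section on $(M^2,\widetilde g)$ — no wait: here the metric is $g$, not $\widetilde g$ — that $V$ is a biharmonic unit section on $(M^2,g)$ if and only if the expression in~(\ref{Eq: biharmonic unit section}), $\bar\Delta[\bar\Delta V - g(\bar\Delta V,V)V] - g(\bar\Delta V,V)\bar\Delta V$, is collinear to $V$. The whole point is to rewrite each of the two operators $\bar\Delta(\cdot)$ and the scalar $g(\bar\Delta V,V)$ appearing there in terms of the conformal data $\widetilde g = e^{2u}g$, $\widetilde V = e^{-u}V$, using Corollary~4.1 as the key substitution. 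Since $V$ is an honest (unit) vector field for $g$, its relation to $\widetilde V$ is just a pointwise rescaling, so collinearity to $V$ and collinearity to $\widetilde V$ are the same condition; that observation lets us pass freely between the two frames at the level of the final conclusion.

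First I would record the two ingredients. From Corollary~4.1 in dimension two we have the clean identity
\begin{equation*}
\bar\Delta V - g(\bar\Delta V,V)V \;=\; e^{3u}\bigl(\widetilde{\bar\Delta}\widetilde V - \widetilde g(\widetilde{\bar\Delta}\widetilde V,\widetilde V)\widetilde V\bigr),
\end{equation*}
so the inner bracket in~(\ref{Eq: biharmonic unit section}) is $e^{3u}$ times the corresponding $\widetilde g$-quantity. Next I need to express the scalar $g(\bar\Delta V,V)$ in conformal terms; this I would extract from the computation of $\widetilde g(\widetilde{\bar\Delta}\widetilde V,\widetilde V)$ already carried out in the proof of Theorem~4.1 (equation~(\ref{eq: tilde Delta V})) — in dimension two the extra terms collapse and one gets a relation of the shape $\widetilde g(\widetilde{\bar\Delta}\widetilde V,\widetilde V) = e^{-2u}\{g(\bar\Delta V,V) + (\text{terms in }u)\}$; solving for $g(\bar\Delta V,V)$ gives $g(\bar\Delta V,V) = e^{2u}\widetilde g(\widetilde{\bar\Delta}\widetilde V,\widetilde V) - (\text{terms in }u)$. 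Finally I need the transformation rule for the outer operator applied to a general vector field $W$: combining the Levi-Civita comparison~(\ref{Eq:Levicivitaconformal}) with the definition of $\bar\Delta$, one has, for any $W\in\X(M)$ on a surface, a formula expressing $\bar\Delta W$ in terms of $\widetilde{\bar\Delta}$ of $e^{\alpha u}W$ for the appropriate weight $\alpha$, plus lower-order $u$-terms; the relevant instance is $W = e^{3u}(\widetilde{\bar\Delta}\widetilde V - \widetilde g(\widetilde{\bar\Delta}\widetilde V,\widetilde V)\widetilde V)$.

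With these in hand the argument is a substitution. Write $\Phi := \widetilde{\bar\Delta}\widetilde V - \widetilde g(\widetilde{\bar\Delta}\widetilde V,\widetilde V)\widetilde V$, so the inner bracket is $e^{3u}\Phi$ and~(\ref{Eq: biharmonic unit section}) becomes $\bar\Delta(e^{3u}\Phi) - g(\bar\Delta V,V)\,\bar\Delta V$. I would then push $\bar\Delta$ through the conformal factor, convert $\bar\Delta V$ itself via $\bar\Delta V = e^{3u}\Phi + g(\bar\Delta V,V)V$, and substitute the scalar relation, so that everything is written with $\widetilde{\bar\Delta}$, $\widetilde V$, and the factor $e^{2u}$; the claim is that all the purely $u$-dependent correction terms that appear are themselves collinear to $V$ (equivalently to $\widetilde V$) and so drop out of the collinearity condition, leaving exactly~(\ref{Eq: conformal change biharmonic unit section}) up to an overall positive conformal factor. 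The main obstacle — and the only place real work is hidden — is the bookkeeping in this last step: verifying that every term generated by commuting $\bar\Delta$ past $e^{3u}$ and by re-expressing the scalar either reassembles into the displayed $\widetilde g$-expression or is proportional to $V$. In dimension two this is precisely what makes the computation tractable, since the "extra" terms in~(\ref{Eq: conformal change Delta V}) cancel for $n=2$; I expect the surviving $u$-terms to organize, after using $g(\bar\Delta V,V)$'s conformal expansion, into multiples of $V$, so that the collinearity statement is insensitive to them. Once that is checked, multiplying through by the (nowhere-zero) power of $e^{u}$ needed to clear denominators gives the equivalence exactly as stated.
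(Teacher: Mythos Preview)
Your approach is sound and the cancellation you anticipate does occur: working in the frame $\{JV,V\}$ with the connection coefficients of~(\ref{Eq: connection M2 V}), the $JV$-components of all the $u$-correction terms arising from the conformal transformation of the outer $\bar\Delta$ and of the scalar $g(\bar\Delta V,V)$ cancel identically, so the $g$-expression~(\ref{Eq: biharmonic unit section}) equals $e^{3u}$ times the target~(\ref{Eq: conformal change biharmonic unit section}) modulo a multiple of $V$. However, this is genuinely different from the paper's argument. The paper does not transform the Euler--Lagrange equation pointwise; instead it first uses Corollary~4.2 to rewrite the \emph{functional} itself as $E_{2}^{v}(V)=\tfrac12\int_{M}e^{2u}\|\widetilde{\bar\Delta}\widetilde V-\widetilde g(\widetilde{\bar\Delta}\widetilde V,\widetilde V)\widetilde V\|_{\widetilde g}^{2}\,v_{\widetilde g}$ and then re-derives the first variation from scratch with respect to variations $\widetilde V_{t}$, integrating by parts via the identity $\bar\Delta(fX)=(\Delta f)X+f\bar\Delta X-2\nabla_{\grad f}X$ and the formulas of \cite{MarkUra14}. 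That variational route sidesteps the term-by-term bookkeeping entirely: the weight $e^{2u}$ appears automatically from $v_{g}$ versus $v_{\widetilde g}$ together with the $e^{-3u}$ of Corollary~4.2, and the collinearity condition drops out of the first-variation formula without any need to verify that leftover $u$-terms are proportional to $V$. Your direct route is purely local and avoids integration, but the price is precisely the cancellation you flagged as ``the only place real work is hidden''; you should carry it out explicitly rather than leave it as an expectation, since it is not automatic from the $n=2$ simplification of~(\ref{Eq: conformal change Delta V}) alone --- it genuinely requires substituting the conformal expansion of $g(\bar\Delta V,V)$ from~(\ref{eq: tilde Delta V}) and checking the resulting identity in the moving frame.
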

\begin{proof}
Recall that $V$ is a biharmonic unit section if and only if $\frac{d}{dt}E_{2}^{v}(V_{t})|_{t=0}=0$ for all variations $\{V_{t}=e^{u}\widetilde{V}_{t}\}_{t\in I}$ within $\X^{1}(M)$. We adopt the terminology of \cite{MarkUra14} and \cite{MarkUra}.
We consider the function
\begin{equation*}
E_{2}^{v}(\widetilde{V}_{t})=E_{2}^{v}(t)=\frac{1}{2}\int_{M}e^{2u}\Big\{\widetilde{g}(\widetilde{\bar{\Delta}}\widetilde{V}_{t}, \widetilde{\bar{\Delta}}\widetilde{V_{t}})-\widetilde{g}(\widetilde{\bar{\Delta}}\widetilde{V}_{t}, \widetilde{V}_{t})^{2}\Big\}v_{\widetilde{g}}.
\end{equation*}
Differentiating the function $E_{2}^{v}(t)$ w.r.t. the variable $t$, we obtain
\begin{eqnarray}\label{Eq: derivative vertical bienergy conformal}
\frac{d}{dt}E_{2}^{v}(t)&=&\int_{M}e^{2u}\widetilde{g}(\widetilde{\nabla}_{\partial_{t}}\widetilde{\bar{\Delta}}\widetilde{V}_{t}, \widetilde{\bar{\Delta}}\widetilde{V}_{t})v_{\widetilde{g}}-\int_{M}e^{2u}\widetilde{g}(\widetilde{\nabla}_{\partial_{t}}\widetilde{\bar{\Delta}}\widetilde{V}_{t}, \widetilde{V}_{t})\widetilde{g}(\widetilde{\bar{\Delta}}\widetilde{V}_{t}, \widetilde{V}_{t})v_{\widetilde{g}}\nonumber\\
&&-\int_{M}e^{2u}\widetilde{g}(\widetilde{\bar{\Delta}}\widetilde{V}_{t}, \widetilde{\nabla}_{\partial_{t}}\widetilde{V}_{t})\widetilde{g}(\widetilde{\bar{\Delta}}\widetilde{V}_{t}, \widetilde{V}_{t})v_{\widetilde{g}}.
\end{eqnarray}
For the sake of convenience, we omit the tilde. From the proof of Theorem 3.2 in \cite{MarkUra14}, we have the following formulas,
\begin{eqnarray}
g(\nabla_{\partial_{t}}\bar{\Delta}V_{t}, \bar{\Delta}V_{t})&=&\Delta[g(\nabla_{\partial_{t}}V_{t}, \bar{\Delta}V_{t})]+2\di \theta_{t}+g(\nabla_{\partial_{t}}V_{t}, \bar{\Delta}\bar{\Delta}V_{t}),\nonumber\\
g(\nabla_{\partial_{t}}\bar{\Delta}V_{t}, V_{t})&=&2\di \omega_{t}+g(\nabla_{\partial_{t}}V_{t}, \bar{\Delta}V_{t}),\nonumber
\end{eqnarray}
where $\theta_{t}(\cdot)=g(\nabla_{\partial_{t}}V_{t}, \nabla_{\cdot}\bar{\Delta}V_{t}), t \in I$ and $\omega_{t}(\cdot)=g(\nabla_{\partial_{t}}V_{t}, \nabla_{\cdot}V_{t})$, $t\in I$. Therefore, (\ref{Eq: derivative vertical bienergy conformal}) gives
\begin{eqnarray}
\frac{d}{dt}E_{2}^{v}(t)&=&\int_{M}e^{2u}\Delta[g(\nabla_{\partial_{t}}V_{t}, \bar{\Delta}V_{t})]v_{g}+2\int_{M}e^{2u}\di \theta _{t}+\int_{M}e^{2u}g(\bar{\Delta}\bar{\Delta}V_{t}, \nabla_{\partial_{t}}V_{t})\nonumber\\
&&-2\int_{M}e^{2u}\di \omega_{t}g(\bar{\Delta}V_{t}, V_{t})v_{g}-2\int_{M}e^{2u}g(\bar{\Delta}V_{t}, V_{t})g(\bar{\Delta}V_{t}, \nabla_{\partial_{t}}V_{t})v_{g}\nonumber\\
&=&\int_{M}g\Big(\Delta(e^{2u})\bar{\Delta}V_{t}-2\nabla_{\grad(e^{2u})}\bar{\Delta}V_{t}+e^{2u}\bar{\Delta}\bar{\Delta}V_{t}+
2\nabla_{\grad[e^{2u}g(\bar{\Delta}V_{t}, V_{t})]}V_{t}\nonumber\\
&&-2e^{2u}g(\bar{\Delta}V_{t}, V_{t})\bar{\Delta}V_{t}, \nabla_{\partial_{t}}V_{t}\Big)v_{g}.\nonumber
\end{eqnarray}
By the formula $\bar{\Delta}(fX)=(\Delta f)X+f\bar{\Delta}X-2\nabla_{\grad f}X$, we deduce
\begin{equation*}
\frac{d}{dt}|_{t=0}E_{2}^{v}(t)=\int_{M}\widetilde{g}\Big(X,\widetilde{\bar{\Delta}}\Big(e^{2u}
(\widetilde{\bar{\Delta}}\widetilde{V}-\widetilde{g}(\widetilde{\bar{\Delta}}\widetilde{V}, \widetilde{V})\widetilde{V})\Big)-e^{2u}\widetilde{g}(\widetilde{\bar{\Delta}}\widetilde{V}, \widetilde{V})\widetilde{\bar{\Delta}}\widetilde{V}\Big)v_{\widetilde{g}},
\end{equation*}
where $X=\nabla_{\partial_{t}}V_{t}|_{t=0}$.
\end{proof}
For a 2-dimensional manifold $(M, g)$ with Gaussian curvature $k_{g}$ and $V$ a unit vector field on $M$, we have $S(V)=\sum_{i=1}^{2}R(\nabla_{e_{i}}V, V)e_{i}=k_{g}(\nabla_{V}V-\di (V )V)$. By (\ref{Eq: conformal change S(V)}), with $n=2$, we have
\begin{eqnarray}
E_{2}(V)&=&\frac{1}{2}\int_{M}\Big\{\|\bar{\Delta}V-g(\bar{\Delta}V, V)V\|^{2}+\|S(V)\|^{2}\Big\}v_{g}\nonumber\\
&=&\frac{1}{2}\int_{M}\Big\{\|\bar{\Delta}V-g(\bar{\Delta}V, V)V\|^{2}+k_{g}^{2}\|\nabla_{V}V-\di (V) V\|^{2}\Big\}v_{g}\nonumber\\
&=&\frac{1}{2}\int_{M}\Big\{e^{2u}\|\widetilde{\bar{\Delta}}\widetilde{V}-\widetilde{g}(\widetilde{\bar{\Delta}}\widetilde{V}, \widetilde{V})\widetilde{V}\|_{\widetilde{g}}^{2}+k_{g}^{2}\Big[\|\widetilde{\nabla}_{\widetilde{V}}\widetilde{V}-\widetilde{\di}(\widetilde{V})\widetilde{V}\|_{\widetilde{g}}^{2}
+\|\widetilde{\grad}u\|_{\widetilde{g}}^{2}\nonumber\\
&&+2\widetilde{g}(\widetilde{\nabla}_{\widetilde{V}}\widetilde{V}-\widetilde{\di}(\widetilde{V})\widetilde{V}, \widetilde{\grad}u)\Big]\Big\}v_{\widetilde{g}}.\nonumber
\end{eqnarray}
In the sequel, we derive the condition on $\widetilde{V}$ for $V$ to be a biharmonic unit vector field. It is enough to deal with the derivative of horizontal part of the integral since the derivative of the vertical part was obtained in Theorem 4.3.
\begin{thm}
Let $(M^{2}, g)$ be a 2-dimensional compact Riemannian manifold, $V$ a unit vector field on $M$ and $u \in C^{\infty}(M)$. We consider the conformal change of metric $\widetilde{g}=e^{2u}g$ such that $(M, \widetilde{g})$ is flat and $\widetilde{V}=e^{-u}V$. Then, $V$ is a biharmonic unit vector field if and only if,
\begin{eqnarray}\label{Eq: conformal change biharmonic unit vector field}
\tilde{\bar{\Delta}}\Big(e^{2u}
(\tilde{\bar{\Delta}}\tilde{V}-\tilde{g}(\tilde{\bar{\Delta}}\tilde{V}, \tilde{V})\tilde{V})\Big)-e^{2u}\tilde{g}(\tilde{\bar{\Delta}}\tilde{V}, \tilde{V})\tilde{\bar{\Delta}}\tilde{V}+[\tilde{V}, J\tilde{V}]k_{g}^{2}J\tilde{V}+k_{g}^{2}(\tilde{\bar{\Delta}}\tilde{V}-\tilde{g}(\tilde{\bar{\Delta}}\tilde{V}, \tilde{V})\tilde{V})+\nonumber\\
+\tilde{g}(J\widetilde{\grad}u, \widetilde{\grad}k_{g}^{2})J\tilde{V}
\end{eqnarray}
is collinear to $\widetilde{V}$, where $J$ is the natural complex structure defined on $(M^{2}, g)$ and $\grad u=e^{2u}\widetilde{\grad} u$.
\end{thm}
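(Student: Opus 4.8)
The plan is to compute the first variation of $E_{2}$ along a variation $\{V_{t}=e^{u}\widetilde{V}_{t}\}_{t\in I}$ through unit vector fields of $(M,g)$, since $V$ is a biharmonic unit vector field precisely when $\frac{d}{dt}E_{2}(V_{t})|_{t=0}=0$ for all such variations. First I would split, using the expansion of $E_{2}(V)$ displayed just before the statement and completing the square in its bracketed term, $E_{2}(V_{t})=E_{2}^{v}(t)+E_{2}^{h}(t)$ with
\[
E_{2}^{v}(t)=\frac{1}{2}\int_{M}e^{2u}\|\widetilde{\bar{\Delta}}\widetilde{V}_{t}-\widetilde{g}(\widetilde{\bar{\Delta}}\widetilde{V}_{t},\widetilde{V}_{t})\widetilde{V}_{t}\|_{\widetilde{g}}^{2}\,v_{\widetilde{g}},\qquad
E_{2}^{h}(t)=\frac{1}{2}\int_{M}k_{g}^{2}\,\|\widetilde{\nabla}_{\widetilde{V}_{t}}\widetilde{V}_{t}-(\widetilde{\di}\widetilde{V}_{t})\widetilde{V}_{t}+\widetilde{\grad}u\|_{\widetilde{g}}^{2}\,v_{\widetilde{g}}.
\]
The derivative of the first piece is already available: by the proof of Theorem 4.3, $\frac{d}{dt}E_{2}^{v}(t)|_{t=0}=\int_{M}\widetilde{g}(X,\mathcal{L}_{v})v_{\widetilde{g}}$, where $X:=\widetilde{\nabla}_{\partial_{t}}\widetilde{V}_{t}|_{t=0}$ is the variation field (a section of $\widetilde{V}^{\perp}=V^{\perp}$; it is the $X$ of Theorem 4.3, there written with the tilde suppressed) and $\mathcal{L}_{v}$ is the sum of the first two summands of \eqref{Eq: conformal change biharmonic unit vector field}. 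So all that remains is to differentiate $E_{2}^{h}$ and to recognize the result as the last three summands of \eqref{Eq: conformal change biharmonic unit vector field}.

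For this I would exploit the flatness of $(M,\widetilde{g})$: choosing locally a $\widetilde{\nabla}$-parallel orthonormal frame $\{\widetilde{e}_{1},\widetilde{e}_{2}\}$ and writing $\widetilde{V}_{t}=\cos\phi_{t}\,\widetilde{e}_{1}+\sin\phi_{t}\,\widetilde{e}_{2}$ for an angle function $\phi_{t}$, one has $\widetilde{\nabla}_{Y}\widetilde{V}_{t}=Y(\phi_{t})J\widetilde{V}_{t}$ for all $Y$, whence the standard flat-surface identities
\[
\widetilde{\nabla}_{\widetilde{V}_{t}}\widetilde{V}_{t}-(\widetilde{\di}\widetilde{V}_{t})\widetilde{V}_{t}=J\widetilde{\grad}\phi_{t},\qquad
\widetilde{\bar{\Delta}}\widetilde{V}_{t}-\widetilde{g}(\widetilde{\bar{\Delta}}\widetilde{V}_{t},\widetilde{V}_{t})\widetilde{V}_{t}=(\Delta\phi_{t})J\widetilde{V}_{t},
\]
\[
[\widetilde{V}_{t},J\widetilde{V}_{t}]=-\widetilde{\grad}\phi_{t},\qquad
\widetilde{\nabla}_{\partial_{t}}\widetilde{V}_{t}=(\partial_{t}\phi_{t})J\widetilde{V}_{t},
\]
with $\Delta$ the (non-negative) Laplacian of $(M,\widetilde{g})$, $\Delta f=-\widetilde{\di}\,\widetilde{\grad}f$. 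In particular $X=\psi\,J\widetilde{V}$ with $\psi:=\widetilde{g}(X,J\widetilde{V})=\partial_{t}\phi_{t}|_{t=0}$ an arbitrary smooth function on $M$; although $\phi_{t}$ is only local, the quantities entering these formulas are global (e.g. $\widetilde{\grad}\phi_{t}=\widetilde{\di}(\widetilde{V}_{t})\,J\widetilde{V}_{t}-\widetilde{\di}(J\widetilde{V}_{t})\,\widetilde{V}_{t}$).

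Differentiating $E_{2}^{h}$ — using that $k_{g},u,\widetilde{g}$ do not depend on $t$, that $J$ is parallel, that $\widetilde{\nabla}_{\partial_{t}}$ commutes with $\widetilde{\grad}$, that $J$ is $\widetilde{g}$-orthogonal with $J^{2}=-\id$, and writing $\phi=\phi_{0}$ — I get
\[
\frac{d}{dt}E_{2}^{h}(t)\Big|_{t=0}=\int_{M}k_{g}^{2}\,\widetilde{g}(J\widetilde{\grad}\psi,\,J\widetilde{\grad}\phi+\widetilde{\grad}u)\,v_{\widetilde{g}}=\int_{M}k_{g}^{2}\,\widetilde{g}(\widetilde{\grad}\psi,\,\widetilde{\grad}\phi-J\widetilde{\grad}u)\,v_{\widetilde{g}}.
\]
Integrating by parts on the compact manifold $M$, and using $\widetilde{\di}(\widetilde{\grad}\phi)=-\Delta\phi$ and $\widetilde{\di}(J\widetilde{\grad}u)=0$ (the rotated gradient of a function is divergence-free in dimension two), this becomes $-\int_{M}\psi\,\widetilde{\di}(k_{g}^{2}(\widetilde{\grad}\phi-J\widetilde{\grad}u))v_{\widetilde{g}}=\int_{M}\widetilde{g}(X,\mathcal{L}_{h})v_{\widetilde{g}}$ with
\[
\mathcal{L}_{h}=\big(-\widetilde{g}(\widetilde{\grad}k_{g}^{2},\widetilde{\grad}\phi)+k_{g}^{2}\,\Delta\phi+\widetilde{g}(\widetilde{\grad}k_{g}^{2},J\widetilde{\grad}u)\big)\,J\widetilde{V}.
\]
Since $-\widetilde{g}(\widetilde{\grad}k_{g}^{2},\widetilde{\grad}\phi)=[\widetilde{V},J\widetilde{V}](k_{g}^{2})$ (because $[\widetilde{V},J\widetilde{V}]=-\widetilde{\grad}\phi$) and $(\Delta\phi)J\widetilde{V}=\widetilde{\bar{\Delta}}\widetilde{V}-\widetilde{g}(\widetilde{\bar{\Delta}}\widetilde{V},\widetilde{V})\widetilde{V}$, the field $\mathcal{L}_{h}$ is exactly the sum of the last three terms of \eqref{Eq: conformal change biharmonic unit vector field}. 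Therefore $\frac{d}{dt}E_{2}(V_{t})|_{t=0}=\int_{M}\widetilde{g}(X,\mathcal{L}_{v}+\mathcal{L}_{h})v_{\widetilde{g}}$, and, $X$ ranging over all of $\Gamma(\widetilde{V}^{\perp})=\Gamma(V^{\perp})$ (equivalently $X=\psi J\widetilde{V}$, $\psi$ arbitrary), this vanishes for every admissible variation iff $\mathcal{L}_{v}+\mathcal{L}_{h}$ is pointwise $\widetilde{g}$-orthogonal to $J\widetilde{V}$, i.e. collinear to $\widetilde{V}$, equivalently to $V$ — which is \eqref{Eq: conformal change biharmonic unit vector field}.

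The hard part will be the bookkeeping in this last step: carrying the integration by parts through while tracking every term produced by the product rule $\widetilde{\di}(fW)=\widetilde{g}(\widetilde{\grad}f,W)+f\,\widetilde{\di}W$ applied to $f=k_{g}^{2}$, $W=\widetilde{\grad}\phi-J\widetilde{\grad}u$ (with $k_{g}$ non-constant and $\widetilde{\grad}u$ non-parallel), and then repackaging them via the flat-surface identities. The vanishing $\widetilde{\di}(J\widetilde{\grad}u)=0$ is exactly what prevents a second-order term in $u$ from surviving, and the non-constancy of the Gaussian curvature is what forces the two extra first-order contributions $[\widetilde{V},J\widetilde{V}]k_{g}^{2}\,J\widetilde{V}$ and $\widetilde{g}(J\widetilde{\grad}u,\widetilde{\grad}k_{g}^{2})\,J\widetilde{V}$; one also has to verify that the local computation in the frame assembles into a global identity, which is clear because every object in the final formula is globally defined even though the angle function $\phi$ is not.
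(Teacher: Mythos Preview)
Your proof is correct and reaches the same conclusion, but it takes a different route from the paper's own argument. The paper does not introduce an angle function in a parallel frame at this stage; instead it works in the moving orthonormal frame $\{J\widetilde V,\widetilde V\}$ with connection coefficients $a,b$ (as in (\ref{Eq: connection M2 V})), differentiates the three pieces of the horizontal integrand $F(\widetilde V_{t})$ separately using $\widetilde\nabla_{\partial_t}\widetilde\nabla_{\widetilde V_t}\widetilde V_t=\widetilde\nabla_{\widetilde V_t}\widetilde\nabla_{\partial_t}\widetilde V_t+\widetilde\nabla_{[\partial_t,\widetilde V_t]}\widetilde V_t$ and $R^{\widetilde g}=0$, and then recombines the resulting terms via the identities $\widetilde g(\widetilde{\bar\Delta}\widetilde V-\widetilde g(\widetilde{\bar\Delta}\widetilde V,\widetilde V)\widetilde V, J\widetilde V)=-\widetilde V(a)-J\widetilde V(b)$ and $[\widetilde V,J\widetilde V]=-a\widetilde V-bJ\widetilde V$. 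Your approach instead exploits flatness to pass to a $\widetilde\nabla$-parallel frame and a (local) angle $\phi_t$, which immediately reduces the horizontal bienergy to $\tfrac12\int k_g^{2}\|J\widetilde{\grad}\phi_t+\widetilde{\grad}u\|^{2}v_{\widetilde g}$ and makes the first variation a one-line integration by parts. This is exactly the technique the paper reserves for Section~5 on the torus; applying it here is legitimate and shortens the computation considerably, at the cost of the extra remark (which you make) that, although $\phi$ is only locally defined, $\widetilde{\grad}\phi$, $\Delta\phi$ and $\partial_t\phi_t$ are global because they equal $-[\widetilde V,J\widetilde V]$, $\widetilde g(\widetilde{\bar\Delta}\widetilde V,J\widetilde V)$ and $\widetilde g(X,J\widetilde V)$ respectively. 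The paper's frame-coefficient method avoids that discussion and keeps the argument manifestly global throughout.
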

\begin{proof}
Using the orthonormal frame field $\{\widetilde{E}_{1}=J(\widetilde{V}), \widetilde{V}\}$ of $(M, \widetilde{g})$  satisfying $(\ref{Eq: connection M2 V})$, we compute
\begin{eqnarray}
\widetilde{\bar{\Delta}}V&=&\widetilde{\nabla}_{\widetilde{\nabla}_{\widetilde{E}_{1}}\widetilde{E}_{1}}\widetilde{V}-\widetilde{\nabla}_{\widetilde{E}_{1}}\widetilde{\nabla}_
{\widetilde{E}_{1}}\widetilde{V}+\widetilde{\nabla}_{\widetilde{\nabla}_{\widetilde{V}}\widetilde{V}}\widetilde{V}-
\widetilde{\nabla}_{\widetilde{V}}\widetilde{\nabla}_{\widetilde{V}}\widetilde{V}\nonumber\\
&=&-b\widetilde{\nabla}_{\widetilde{V}}\widetilde{V}-\widetilde{E}_{1}(b)\widetilde{E}_{1}-
b\widetilde{\nabla}_{\widetilde{E}_{1}}\widetilde{E}_{1}+a\widetilde{\nabla}_{\widetilde{E}_{1}}\widetilde{V}-\widetilde{V}(a)\widetilde{E}_{1}-a\widetilde{\nabla}_{\widetilde{V}}\widetilde{E}_{1}\nonumber\\
&=&-\widetilde{E}_{1}(b)\widetilde{E}_{1}-\widetilde{V}(a)\widetilde{E}_{1}+(a^{2}+b^{2})\widetilde{V}.\nonumber
\end{eqnarray}
As a consequence,
\begin{equation}\label{Eq: Laplacian 2 manifold product JV}
\widetilde{g}(\widetilde{\bar{\Delta}}\widetilde{V}-\widetilde{g}(\widetilde{\bar{\Delta}}\widetilde{V}, \widetilde{V})\widetilde{V}, J(\widetilde{V}))=-\widetilde{E}_{1}(b)-\widetilde{V}(a)=\widetilde{V}(\widetilde{\di} J(\widetilde{V}))-J(\widetilde{V})(\widetilde{\di}\widetilde{V}).
\end{equation}
We consider the horizontal part of the bienergy functional in terms of $\widetilde{V}$:
\begin{equation*}
F(\widetilde{V}_{t})=F(t)=\frac{1}{2}\int_{M}\Big\{k_{g}^{2}\|\widetilde{\nabla}_{\widetilde{V}_{t}}\widetilde{V}_{t}-(\widetilde{\di} \widetilde{V}_{t})\widetilde{V}_{t}\|_{\widetilde{g}}^{2}+2k_{g}^{2}\widetilde{g}(\widetilde{\nabla}_{\widetilde{V}_{t}}\widetilde{V}_{t}-(\widetilde{\di}\widetilde{V}_{t})\widetilde{V}_{t}, \widetilde{\grad}u)+k_{g}^{2}\|\widetilde{\grad}u\|_{\widetilde{g}}^{2}\Big\}v_{\widetilde{g}}.
\end{equation*}
Differentiating the function $F(t)$ w.r.t. the variable $t$, we obtain
\begin{eqnarray}\label{Eq: derivative F}
\frac{d}{dt}F(t)&=&\frac{d}{dt}\frac{1}{2}\int_{M}k_{g}^{2}\Big(\widetilde{g}(\widetilde{\nabla}_{\widetilde{V}_{t}}\widetilde{V}_{t}, \widetilde{\nabla}_{\widetilde{V}_{t}}\widetilde{V}_{t})+(\widetilde{\di} \widetilde{V}_{t})^{2}\Big) v_{\widetilde{g}}\nonumber\\
&&+\frac{d}{dt}\int_{M}k_{g}^{2}\widetilde{g}\Big(\widetilde{\nabla}_{\widetilde{V}_{t}}\widetilde{V}_{t}-(\widetilde{\di} \widetilde{V}_{t})\widetilde{V}_{t}, \widetilde{\grad} u\Big)v_{\widetilde{g}}
\end{eqnarray}
In the sequel, we omit the tilde for simplification. Assuming that $(M, \widetilde{g})$ is flat, we get
\begin{eqnarray}
\frac{d}{dt}&\frac{1}{2}\int_{M}&k_{g}^{2}\Big(g(\nabla_{V_{t}}V_{t}, \nabla_{V_{t}}V_{t})+(\di V_{t})^{2}\Big)v_{g}=\int_{M}\Big\{k_{g}^{2}g(\nabla_{\partial_{t}}\nabla_{V_{t}}V_{t}, \nabla_{V_{t}}V_{t})+k_{g}^{2}(\di V_{t})\frac{d}{dt}[\di V_{t}]\Big\}v_{g}\nonumber\\
&=&\int_{M}\Big(k_{g}^{2}g(\nabla_{V_{t}}\nabla_{\partial_{t}}V_{t}+\nabla_{[\partial_{t}, V_{t}]}V_{t}, \nabla_{V_{t}}V_{t})+k_{g}^{2}(\di V_{t})g(\nabla_{\partial_{t}}\nabla_{e_{i}}V_{t}, e_{i})\Big)v_{g}\nonumber\\
&=&\int_{M}\Big(k_{g}^{2}g(\nabla_{V_{t}}\nabla_{\partial_{t}}V_{t}+\nabla_{\nabla_{\partial_{t}}V_{t}}V_{t}, \nabla_{V_{t}}V_{t})+k_{g}^{2}(\di V_{t})g(\nabla_{e_{i}}\nabla_{\partial_{t}}V_{t}, e_{i})\Big)v_{g}\nonumber\\
&=&\int_{M}\Big(k_{g}^{2}g(\nabla_{V_{t}}\nabla_{\partial_{t}}V_{t}+\nabla_{\nabla_{\partial_{t}}V_{t}}V_{t}, \nabla_{V_{t}}V_{t})+k_{g}^{2}\di V_{t}\di \eta_{t}\Big)v_{g}\nonumber\\
&=&\int_{M}\Big(k_{g}^{2}g(\nabla_{V_{t}}\nabla_{\partial_{t}}V_{t}+\nabla_{\nabla_{\partial_{t}}V_{t}}V_{t}, \nabla_{V_{t}}V_{t})+\di (k_{g}^{2}[\di V_{t}]\eta_{t})-\eta_{t}(\grad [k_{g}^{2}\di V_{t}])\Big)v_{g}\nonumber\\
&=&\int_{M}\Big(k_{g}^{2}g(\nabla_{V_{t}}\nabla_{\partial_{t}}V_{t}+\nabla_{\nabla_{\partial_{t}}V_{t}}V_{t}, \nabla_{V_{t}}V_{t})-g(\nabla_{\partial_{t}}V_{t}, \grad (k_{g}^{2}\di V_{t}))\Big)v_{g},\nonumber
\end{eqnarray}
where $\eta_{t}(\cdot)=g(\nabla_{\partial_{t}}V_{t}, \cdot)$. We set $X=\nabla_{\partial_{t}}V_{t}|_{t=0}$. As a consequence,
\begin{equation}\label{Eq: EulerLagrangeequF I}
\frac{1}{2}\frac{d}{dt}|_{t=0}\int_{M}k_{g}^{2}\Big(g(\nabla_{V_{t}}V_{t}, \nabla_{V_{t}}V_{t})+(\di V_{t})^{2}\Big)v_{g}=\int_{M}k_{g}^{2}g(\nabla_{V}X+\nabla_{X}V, \nabla_{V}V)-g(X, \grad[k_{g}^{2}\di V])v_{g}
\end{equation}
Since $X$ is perpendicular to $V$, we have $X=f E_{1}=g(X, JV) J(V)$. By (\ref{Eq: connection M2 V}), we have
\begin{equation*}
k_{g}^{2}g(\nabla_{V}X+\nabla_{X}V, \nabla_{V}V)=k_{g}^{2}g\Big(a V(f) + a b f\Big)=\di(a k_{g}^{2} f V)+g(X, J V) V\Big(\di (J V) k_{g}^{2}\Big),
\end{equation*}
and using (\ref{Eq: connection M2 V}) and (\ref{Eq: Laplacian 2 manifold product JV}), we get
\begin{eqnarray}\label{Eq: EulerLagrangeequF II}
V\Big(\di J(V)k_{g}^{2}\Big)J(V)-g(\grad[k_{g}^{2}\di V], J V)J V&=&\Big(-a V(k_{g}^{2})- b E_{1}(k_{g}^{2}) - V(a) k_{g}^{2} - k_{g}^{2} E_{1}(b)\Big)E_{1}\nonumber\\
&=&[V, J(V)](k_{g}^{2})JV+k_{g}^{2}(\bar{\Delta}V-g(\bar{\Delta}V, V)V, J V)J V.
\end{eqnarray}
Furthermore,
\begin{eqnarray}
&\frac{d}{dt}&\int_{M}k_{g}^{2}g\Big(\nabla_{V_{t}}V_{t}-(\di V_{t})V_{t}, \grad u\Big)v_{g}=\int_{M}k_{g}^{2}g\Big(\nabla_{\partial_{t}} \nabla_{V_{t}}V_{t}-[\frac{d}{dt}\di V_{t}]V_{t}-\di V_{t}\nabla_{\partial_{t}}V_{t}, \grad u\Big)v_{g}\nonumber\\
&=&\int_{M}k_{g}^{2}g\Big(\nabla_{V_{t}}\nabla_{\partial_{t}}V_{t}+\nabla_{\nabla_{\partial_{t}}V_{t}}V_{t}, \grad u\Big)v_{g}-\int_{M}(\di \eta_{t})k_{g}^{2}V_{t}(u)v_{g}-\int_{M}k_{g}^{2}g\Big(\nabla_{\partial_{t}}V_{t}, \di V_{t}\grad u\Big)v_{g}\nonumber\\
&=&\int_{M}k_{g}^{2}g\Big(\nabla_{V_{t}}\nabla_{\partial_{t}}V_{t}+\nabla_{\nabla_{\partial_{t}}V_{t}}V_{t}, \grad u\Big)v_{g}-\int_{M}\di (k_{g}^{2}V_{t}(u)\eta_{t})v_{g}+\int_{M}\eta_{t}(\grad[k_{g}^{2}V_{t}(u)])v_{g}\nonumber\\
&-&\int_{M}k_{g}^{2}g\Big(\nabla_{\partial_{t}}V_{t}, \di V_{t}\grad u\Big)v_{g}.\nonumber
\end{eqnarray}
As a consequence,
\begin{eqnarray}\label{Eq: EulerLagrangeequF III}
\frac{d}{dt}|_{t=0}\int_{M}k_{g}^{2}g\Big(\nabla_{V_{t}}V_{t}-(\di V_{t})V_{t}, \grad u\Big)v_{g}&=&\int_{M}k_{g}^{2}g\Big(\nabla_{V}X+\nabla_{X}V, \grad u\Big)v_{g}\nonumber\\
&&+\int_{M}g\Big(X, \grad[k_{g}^{2}V(u)]-k_{g}^{2}(\di V)\grad u\Big)v_{g},
\end{eqnarray}
where $X=f E_{1}=f JV$. By using (\ref{Eq: connection M2 V}), we have
\begin{eqnarray}\label{Eq: EulerLagrangeequF IV}
k_{g}^{2}g(\nabla_{V}X+\nabla_{X}V, \grad u)&=&k_{g}^{2} J V(u) \di (f V) - a k_{g}^{2} f V(u)\nonumber\\
&=& \di\Big(k_{g}^{2} J V(u) f V\Big)-g(J V, X)V\Big(k_{g}^{2} J V(u)\Big)+k_{g}^{2} g(J V, X) V(u) \di J V,
\end{eqnarray}
and
\begin{eqnarray}\label{Eq: EulerLagrangeequF V}
&-V(k_{g}^{2}J V(u))&+k_{g}^{2}[\di(J V)]V(u)+J V(k_{g}^{2}V(u))-k_{g}^{2}(\di V) J V(u)\nonumber\\
&=&-V(k_{g}^{2})E_{1}(u)+E_{1}(k_{g}^{2})V(u)-k_{g}^{2}[V, E_{1}](u)-k_{g}^{2} a V(u)-k_{g}^{2} b E_{1}(u)\nonumber\\
&=&-V(k_{g}^{2})E_{1}(u)+E_{1}(k_{g}^{2})V(u)\nonumber\\
&=&g(J \grad u, \grad k_{g}^{2}).
\end{eqnarray}
Substituting (\ref{Eq: EulerLagrangeequF II}) in (\ref{Eq: EulerLagrangeequF I}), (\ref{Eq: EulerLagrangeequF IV}) and (\ref{Eq: EulerLagrangeequF V}) in (\ref{Eq: EulerLagrangeequF III}),  (\ref{Eq: derivative vertical bienergy conformal}) and (\ref{Eq: derivative F}) gives (\ref{Eq: conformal change biharmonic unit vector field}).
\end{proof}

\section{Biharmonic unit sections and biharmonic unit vector fields on 2-torus}
Let $d_{1}, d_{2} \in \Real^{2}$ be two linearly independent vectors and $\Gamma \subset \Real^{2}$ the lattice given by
\begin{equation*}
\Gamma=\{m d_{1} + n d_{2}: m, n \in \mathbb{Z}\}.
\end{equation*}
Consider the 2-torus $T^{2}=\Real^{2}/\Gamma$ and the natural projection $\pi: \Real^{2}\rightarrow T^{2}$. Then $\Real^{2}$ is the universal covering space of $T^{2}$. Let $J$ be the almost complex structure on $T^{2}$ induced by the fixed orientation. Let $g$ be an arbitrary Riemannian metric on $T^{2}$, $k_{g}$ the Gaussian curvature of $(T^{2}, g)$ and $\{S, W\}$ an orthonormal frame field such that $JS=W$. We give the following definitions (\cite[pages 30-31]{DragomirPerro11}):
\begin{defn}
Let $(m, n)\in \mathbb{Z}^{2}$. We set
\begin{equation*}
Per(m, n)=\{\alpha \in C^{\infty}(\Real^{2}):\alpha(\xi+d_{1})-\alpha(\xi)=2m\pi, \alpha(\xi+d_{2})-\alpha(\xi)=2n\pi, \forall \xi \in \Real^{2}\}.
\end{equation*}
An element $\alpha \in Per(m, n)$ is referred to as an \emph{$(m, n)$-semiperiodic function}.
\end{defn}
We set $\mathcal{W}=\bigcup_{(m, n)\in \mathbb{Z}^{2}}Per(m, n)$.
\begin{defn}
Let $V$ be a unit vector field on $(T^{2}, g)$. A function $\alpha:\Real^{2}\rightarrow \Real$ is called a \emph{$(S, W)$-angle function} for $V$ if
\begin{equation*}
V\circ \pi=(\cos\alpha)S\circ\pi+(\sin\alpha)W\circ\pi
\end{equation*}
everywhere on $\Real^{2}$.
\end{defn}
We prove the following Theorem:
\begin{thm}
Let $(T^{2}, g)$ be a 2-dimensional Riemannian torus. Then:\\
(i) A unit vector field $V$ on $(T^{2}, g)$ is a biharmonic unit section if and only if it is a harmonic unit section.\\
(ii) There exist biharmonic unit vector fields on $(T^{2}, g)$.
\end{thm}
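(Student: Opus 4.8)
The plan is to reduce everything to a single scalar quantity --- the angle function --- by passing to the conformal gauge in which the metric is flat. By uniformization, fix $u\in C^{\infty}(T^{2})$ so that $\widetilde g=e^{2u}g$ is flat, and write $\widetilde V=e^{-u}V=(\cos\alpha)S+(\sin\alpha)W$ where $\{S,W\}$ is now a \emph{globally parallel} orthonormal frame of $(T^{2},\widetilde g)$ with $JS=W$ (such a frame exists because a flat torus inherits a parallel framing from $\Real^{2}$); then $d\alpha$, hence $\widetilde{\grad}\alpha$, descends to $T^{2}$. Since $\widetilde\nabla S=\widetilde\nabla W=0$ and $J$ is $\widetilde\nabla$-parallel, one computes directly $\widetilde\nabla_{X}\widetilde V=(d\alpha(X))\,J\widetilde V$ and $\widetilde\nabla_{X}(J\widetilde V)=-(d\alpha(X))\,\widetilde V$, and therefore
\[
\widetilde{\bar{\Delta}}\widetilde V=(\widetilde{\Delta}\alpha)\,J\widetilde V+\|\widetilde{\grad}\alpha\|_{\widetilde g}^{2}\,\widetilde V,\qquad \widetilde g(\widetilde{\bar{\Delta}}\widetilde V,\widetilde V)=\|\widetilde{\grad}\alpha\|_{\widetilde g}^{2},
\]
where $\widetilde{\Delta}$ is the nonnegative Laplacian of $(T^{2},\widetilde g)$. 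In particular $\widetilde{\bar{\Delta}}\widetilde V-\widetilde g(\widetilde{\bar{\Delta}}\widetilde V,\widetilde V)\widetilde V=(\widetilde{\Delta}\alpha)\,J\widetilde V$, so by Corollary 4.2, $\bar{\Delta}V-g(\bar{\Delta}V,V)V=e^{3u}(\widetilde{\Delta}\alpha)\,J\widetilde V$, and $V$ is a harmonic unit section exactly when $\widetilde{\Delta}\alpha=0$.

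\emph{Part (i).} A harmonic unit section is always a biharmonic unit section (recorded after Theorem 2.2), so it suffices to prove the converse. Assume $V$ is a biharmonic unit section and insert the formulas above into the criterion of Theorem 4.3. Using $\bar{\Delta}(fX)=(\Delta f)X+f\bar{\Delta}X-2\nabla_{\grad f}X$ from the proof of Theorem 4.3 together with $\widetilde{\bar{\Delta}}(J\widetilde V)=-(\widetilde{\Delta}\alpha)\widetilde V+\|\widetilde{\grad}\alpha\|_{\widetilde g}^{2}J\widetilde V$, the $J\widetilde V$-component of the obstruction collapses --- the terms $e^{2u}(\widetilde{\Delta}\alpha)\|\widetilde{\grad}\alpha\|_{\widetilde g}^{2}$ cancel --- to exactly $\widetilde{\Delta}\!\big(e^{2u}\widetilde{\Delta}\alpha\big)=0$. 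On the compact surface $(T^{2},\widetilde g)$ this forces $e^{2u}\widetilde{\Delta}\alpha\equiv c$ for a constant $c$; integrating and using $\int_{T^{2}}\widetilde{\Delta}\alpha\,v_{\widetilde g}=-\int_{T^{2}}\widetilde{\di}(\widetilde{\grad}\alpha)\,v_{\widetilde g}=0$ and $\int_{T^{2}}e^{-2u}\,v_{\widetilde g}>0$ gives $c=0$. Hence $\widetilde{\Delta}\alpha=0$, i.e. $V$ is a harmonic unit section.

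\emph{Part (ii).} Feeding the flat-gauge computations above and $\widetilde\nabla_{\widetilde V}\widetilde V-(\widetilde{\di}\widetilde V)\widetilde V=J\widetilde{\grad}\alpha$ into the expression for $E_{2}(V)$ obtained just before Theorem 4.4, the bienergy becomes a functional of the angle alone,
\[
E_{2}(V)=\frac{1}{2}\int_{T^{2}}\Big\{e^{2u}(\widetilde{\Delta}\alpha)^{2}+k_{g}^{2}\,\|J\widetilde{\grad}\alpha+\widetilde{\grad}u\|_{\widetilde g}^{2}\Big\}\,v_{\widetilde g},
\]
with $u$ and $k_{g}$ fixed, and invariant under $\alpha\mapsto\alpha+\mathrm{const}$. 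Fix a homotopy class of unit vector fields, i.e. a semiperiod $(m,n)$, and write $\alpha=\alpha_{0}+\beta$ with $\alpha_{0}$ a fixed $(m,n)$-semiperiodic function and $\beta\in C^{\infty}(T^{2})$, normalised by $\int_{T^{2}}\beta=0$. The functional is convex in $\beta$ and coercive on $\{\beta\in H^{2}(T^{2}):\int\beta=0\}$, since the first term bounds $\|\widetilde{\Delta}\beta\|_{L^{2}}^{2}$ from below up to an additive constant, and $\|\widetilde{\Delta}\beta\|_{L^{2}}$ controls $\|\beta\|_{H^{2}}$ modulo constants by elliptic estimates and the Poincar\'{e} inequality. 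The direct method then yields a minimiser $\beta^{*}$; by elliptic regularity for its fourth-order Euler--Lagrange equation $\beta^{*}$ is smooth, and since the homotopy class is open in $\X^{1}(T^{2})$ and every admissible variation through unit vector fields is realised by varying the angle function, $\beta^{*}$ produces a critical point of $E_{2}$ on $\X^{1}(T^{2})$, that is, a biharmonic unit vector field.

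\emph{Main difficulty.} The delicate step is the flat-gauge bookkeeping: checking that the $J\widetilde V$-component of the Theorem 4.3 obstruction reduces precisely to $\widetilde{\Delta}(e^{2u}\widetilde{\Delta}\alpha)$ after the indicated cancellations, and that the curvature terms in Theorem 4.4 reassemble, in the flat gauge, into the Euler--Lagrange equation of the displayed bienergy functional, so that (ii) is genuinely the minimisation problem above. Granting these reductions, (i) is a compactness argument on the torus and (ii) is a routine application of the direct method.
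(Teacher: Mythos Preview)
Your Part (i) follows the paper's argument essentially verbatim: pass to the flat conformal gauge, express $\widetilde V$ via an angle function $\alpha$ relative to a parallel frame, compute that the biharmonic--unit--section obstruction of Theorem~4.3 projected onto $J\widetilde V$ collapses to $\widetilde{\Delta}(e^{2u}\widetilde{\Delta}\alpha)=0$, and then integrate to force $\widetilde{\Delta}\alpha=0$.

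Your Part (ii), however, takes a genuinely different route. The paper first derives the explicit Euler--Lagrange equation via Theorem~4.4, obtaining the linear fourth--order PDE $P\alpha=f$ with $P h=\widetilde{\Delta}(e^{2u}\widetilde{\Delta}h)-\widetilde{\di}(k_{g}^{2}\widetilde{\grad}h)$ and $f=-\widetilde{\di}(k_{g}^{2}J\widetilde{\grad}u)$; it then observes that $P$ is self--adjoint elliptic, shows $\ker P=\{\text{constants}\}$ by the integration--by--parts identity $\int e^{2u}(\widetilde{\Delta}h)^{2}=-\int k_{g}^{2}\|\widetilde{\grad}h\|^{2}$, and solves by Fredholm theory since $\int f\,v_{\widetilde g}=0$. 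You instead write down $E_{2}$ directly as a quadratic functional of $\alpha$ and minimize by the direct method (convexity, coercivity in $H^{2}$ from the $e^{2u}(\widetilde{\Delta}\beta)^{2}$ term, elliptic regularity). Both arguments are correct and, the functional being quadratic, essentially equivalent. Your approach has the advantage that it bypasses the rather laborious computation of Theorem~4.4 altogether --- you only need the bienergy formula preceding it together with $\widetilde{\nabla}_{\widetilde V}\widetilde V-(\widetilde{\di}\widetilde V)\widetilde V=J\widetilde{\grad}\alpha$ --- and it delivers existence in every homotopy class at once, something the paper defers to Theorem~5.6. The paper's approach, in turn, makes the governing PDE explicit, which feeds directly into the later stability result (Theorem~5.7). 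One remark: the ``delicate step'' you flag for (ii), namely checking that the curvature terms in Theorem~4.4 reassemble into the Euler--Lagrange equation of your displayed functional, is in fact unnecessary in your argument --- since variations through unit vector fields are exactly variations of the angle, a minimizer of your functional is by definition a biharmonic unit vector field, with no appeal to Theorem~4.4 required.
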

\begin{proof}
(i) Let $u\in C^{\infty}(T^{2})$ and $\widetilde{g}=e^{2u}g$ be the metric on $T^{2}$ conformal to $g$ such that $(T^{2}, \widetilde{g})$ is flat. We denote by $\widetilde{\nabla}$ the Levi-Civita connection with respect to $\widetilde{g}$, $\{V_{1}, V_{2}\}$ a parallel $\widetilde{g}$-orthonormal frame field and $\widetilde{V}=e^{-u}V$. Then, we have
\begin{equation*}
\widetilde{V}=\cos\alpha V_{1}+\sin\alpha V_{2},
\end{equation*}
for a smooth function $\alpha : T^{2}\rightarrow \Real$. We compute
\begin{eqnarray}
\widetilde{\bar{\Delta}}\widetilde{V}&=&\widetilde{\Delta}(\cos\alpha)V_{1}+\widetilde{\Delta}(\sin\alpha)V_{2}=
[-V_{1}V_{1}(\cos\alpha)-V_{2}V_{2}(\cos\alpha)]V_{1}+[-V_{1}V_{1}(\sin\alpha)-
V_{2}V_{2}(\sin\alpha)]V_{2}\nonumber\\
&=&[-\sin\alpha \widetilde{\Delta}\alpha+(\cos\alpha) \|\widetilde{\grad} \alpha\|^{2}]V_{1}+[(\cos\alpha)\widetilde{\Delta}\alpha+(\sin\alpha) \|\widetilde{\grad}\alpha\|^{2}]V_{2},\nonumber\\
\widetilde{\bar{\Delta}}(J\widetilde{V})&=&-[(\cos\alpha) \widetilde{\Delta}\alpha+(\sin\alpha) \|\widetilde{\grad}\alpha\|^{2}]V_{1}+[-\sin\alpha \widetilde{\Delta}\alpha+(\cos\alpha) \|\widetilde{\grad}\alpha\|^{2}]V_{2},\nonumber
\end{eqnarray}
\begin{equation}\label{Eq: helpfulequationstildeVbiharmonicsection}
\begin{array}{lcr}
\widetilde{g}(\widetilde{\bar{\Delta}}\widetilde{V}, \widetilde{V})=\|\widetilde{\grad}\alpha\|^{2}, &\widetilde{g}(\widetilde{\bar{\Delta}}\widetilde{V}, J\widetilde{V})=\widetilde{\Delta}\alpha, & \widetilde{g}(\widetilde{\bar{\Delta}}(J\widetilde{V}), J\widetilde{V})=\|\widetilde{\grad}\alpha\|^{2},
\end{array}
\end{equation}
\begin{eqnarray}
\widetilde{\bar{\Delta}}\widetilde{V}-\widetilde{g}(\widetilde{\bar{\Delta}}\widetilde{V}, \widetilde{V})\widetilde{V}&=&-\sin\alpha \widetilde{\Delta}\alpha V_{1}+(\cos\alpha) \widetilde{\Delta}\alpha V_{2}=\widetilde{\Delta}\alpha J(\widetilde{V}),\nonumber\\
\widetilde{\bar{\Delta}}\Big(e^{2u}
(\widetilde{\bar{\Delta}}\widetilde{V}-\widetilde{g}(\widetilde{\bar{\Delta}}\widetilde{V}, \widetilde{V})\widetilde{V})\Big)&=&\widetilde{\bar{\Delta}}(e^{2u}\widetilde{\Delta}\alpha J\widetilde{V})
=\widetilde{\Delta}(e^{2u}\widetilde{\Delta}\alpha)J\widetilde{V}+e^{2u}(\widetilde{\Delta})\alpha\widetilde{\bar{\Delta}}(J\widetilde{V})-
\widetilde{\nabla}_{\widetilde{\grad}[e^{2u}\widetilde{\Delta}\alpha]}J\widetilde{V},\nonumber
\end{eqnarray}
where $J\widetilde{V}=-\sin\alpha V_{1}+\cos\alpha V_{2}$. Using Relations (\ref{Eq: helpfulequationstildeVbiharmonicsection}) and taking the inner product of (\ref{Eq: conformal change biharmonic unit section}) with $J\widetilde{V}$, we have
\begin{equation*}
\widetilde{\Delta}(e^{2u}\widetilde{\Delta}\alpha)=0.
\end{equation*}
Therefore,
\begin{equation*}
\widetilde{\Delta}\alpha=c e^{-2u},
\end{equation*}
where $c$ is a real constant. Integrating over $T^{2}$, we have
\begin{equation*}
0=\int_{T^{2}}\widetilde{\Delta}\alpha v_{\widetilde{g}}=c\int_{T^{2}}e^{-2u}v_{\widetilde{g}},
\end{equation*}
i.e. $c=0$ and $\alpha$ is a constant function on $T^{2}$. In this case, $\widetilde{V}$ is a harmonic unit section on $(T^{2}, \widetilde{g})$. By using Corollary 4.2, $V$ is a harmonic unit section on $(T^{2}, g)$.\\
(ii) Since $V_{1}, V_{2}$ are parallel vector fields on $(T^{2}, \widetilde{g})$, we have
\begin{eqnarray}
\widetilde{\nabla}_{\widetilde{V}}J\widetilde{V}&=&\Big\{-\cos^{2}\alpha V_{1}(\alpha)-\sin\alpha\cos\alpha V_{2}(\alpha)\Big\}V_{1}+\Big\{-\sin\alpha\cos\alpha V_{1}(\alpha)-\sin^{2}\alpha V_{2}(\alpha)\Big\}V_{2},\nonumber\\
\widetilde{\nabla}_{J\widetilde{V}}\widetilde{V}&=&\Big\{\sin^{2}\alpha V_{1}(\alpha)-\sin\alpha\cos\alpha V_{2}(\alpha)\Big\}V_{1}+\Big\{-\sin\alpha\cos\alpha V_{1}(\alpha)+\cos^{2}\alpha V_{2}(\alpha)\Big\}V_{2},\nonumber
\end{eqnarray}
\begin{equation}\label{Eq: helpfulequationstildeVbiharmonicvectorfield}
[\widetilde{V}, J\widetilde{V}]=\widetilde{\nabla}_{\widetilde{V}}J\widetilde{V}-\widetilde{\nabla}_{J\widetilde{V}}\widetilde{V}=-\widetilde{\grad}\alpha.
\end{equation}
Using Relations (\ref{Eq: helpfulequationstildeVbiharmonicsection}), (\ref{Eq: helpfulequationstildeVbiharmonicvectorfield}), $\widetilde{\di}(J\widetilde{\grad}u)=0$ and taking the inner product of (\ref{Eq: conformal change biharmonic unit vector field}) with $J\widetilde{V}$, we obtain that $V$ is a biharmonic unit vector field on $(T^{2}, g)$ if and only if
\begin{equation}\label{Eq: centralPDEbiharmonicvectorfield}
\widetilde{\Delta}(e^{2u}\widetilde{\Delta}\alpha)-\widetilde{\di}(k_{g}^{2}\widetilde{\grad}\alpha)=-\widetilde{\di}(k_{g}^{2}J\widetilde{\grad} u).
\end{equation}
Equation (\ref{Eq: centralPDEbiharmonicvectorfield}) can be written
\begin{equation}\label{Eq: centralPDEbiharmonicvectorfieldI}
Ph=f
\end{equation}
where $Ph=\widetilde{\Delta}(e^{2u}\widetilde{\Delta}h)-\widetilde{\di}(k_{g}^{2}\widetilde{\grad}h)$ and $f=-\widetilde{\di}(k_{g}^{2}J\widetilde{\grad} u)$ ($h \in C^{\infty}(T^{2})$). We easily notice that $P$ is an elliptic, self-adjoint operator acting on smooth functions on $T^{2}$ equipped with the metric $\tilde{g}$. Using standard ideas of the theory of elliptic operators (\cite[p.464]{Besse87}), Equation (\ref{Eq: centralPDEbiharmonicvectorfieldI}) admits solution if and only if $f\in (Ker P)^{\perp}$. On the other hand, Equation $Pv=0$, implies
\begin{equation*}
\widetilde{\Delta}(e^{2u}\widetilde{\Delta}h)=\widetilde{\di}(k_{g}^{2}\widetilde{\grad}h).
\end{equation*}
Therefore,
\begin{equation*}
\int_{T^{2}}\widetilde{\Delta}(e^{2u}\widetilde{\Delta}h) h v_{\widetilde{g}}=\int_{T^{2}}\widetilde{\di}(k_{g}^{2}\widetilde{\grad}h)h v_{\widetilde{g}}.
\end{equation*}
Hence,
\begin{equation*}
\int_{T^{2}}e^{2v}(\widetilde{\Delta}h)^{2}v_{\widetilde{g}}=-\int_{T^{2}}k_{g}^{2}\|\widetilde{\grad}h\|^{2}v_{\widetilde{g}}.
\end{equation*}
i.e. $h$ is a constant. As a consequence, $Ph=f$ has solution if and only if $\int_{T^{2}}fv_{\widetilde{g}}=0$, which holds.
\end{proof}
\begin{rem}
In \cite{Wiegm95}, G.~Wiegmink investigated the total bending of vector fields on 2-dimensional tori. More precisely, he proved that a global unit vector field $V$ is a critical point of the total bending on $(T^{2}, g)$ if and only if $\widetilde{V}=e^{-u}V$ is a critical point of the total bending on $(T^{2}, \widetilde{g})$. Theorem 5.3 (part (ii)) asserts that is no longer the case for biharmonic unit vector fields.
\end{rem}
We remind the following Lemma (\cite{DragomirPerro11}, \cite{Wiegm95}):
\begin{lem}
Let $(S, W)$ be a fixed global orthonormal 2-frame field on the 2-torus $(T^{2}, g)$. With notations as above we have:
\begin{enumerate}
\item  Given $\alpha \in C^{\infty}(\Real^{2})$, there is $\bar{\alpha}\in C^{\infty}(T^{2})$ such that $a(p)=\bar{\alpha}(\pi(p))$ for all $p \in \Real^{2}$ if and only if $\alpha \in Per(0, 0)$.
 \item  For all $(m, n)\in \mathbb{Z}^{2}$ the set $Per(m, n)$ is an affine subspace of $C^{\infty}(\Real^{2})$ and $Per(0, 0)$ is its associated vector space, in particular:
\begin{equation*}
\alpha \in Per(m, n) \Longrightarrow [\forall Y \in \X(\Real^{2}), Y(\alpha)\in Per(0, 0)].
\end{equation*}
\item For each $V \in \X^{1}(T^{2})$ there is an angle function $\alpha \in \mathcal{W}$. For fixed $V \in \X^{1}(T^{2})$, all its angle functions differ only by integer multiples of $2\pi$ and are contained in one $Per(m, n)$, $(m, n)$ depending on $(S, W)$.
\item $\alpha \in C^{\infty}(\Real^{2})$ is an angle function for a unit vector field $V\in \X^{1}(T^{2})$ if and only if $\alpha \in \mathcal{W}$.
\item Homotopy classification of unit vector fields on $T^{2}$: According to (3) to each $V\in \X^{1}(T^{2}$, one can uniquely assign a pair $htp^{(S, W)}(V)\in\mathbb{Z}^{2}$ by requiring that $htp^{(S, W)}(V)=(m, n)$ if the $(S, W)$-angle functions of $V$ are in $Per(m, n)$. Then $V_{1}, V_{2}$ are homotopic in $\X^{1}(T^{2})$, exactly if $htp^{(S, W)}(V_{1})=htp^{(S, W)}(V_{2})$. Thus the homotopy classes of $\X^{1}(T^{2})$ are classified by the elements of $\mathbb{Z}^{2}$, and we shall denote them by $\mathcal{E}_{(m, n)}^{(S, W)}$ ($(m, n)\in\mathbb{Z}^{2}$). The homotopy classes themselves do not depend on the choice of $(S, W)$, however their index $(m, n)$ does.
\end{enumerate}
\end{lem}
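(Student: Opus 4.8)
The plan is to handle the five items in increasing order of depth: (1) and (2) are bookkeeping with the defining jump relations of $Per(m,n)$, (3) and (4) are covering-space lifting statements, and (5) is a homotopy-lifting argument fed by all of the above. First I would dispatch (1): an $\alpha\in C^{\infty}(\Real^{2})$ descends to $T^{2}=\Real^{2}/\Gamma$ exactly when it is invariant under the two generators $d_{1},d_{2}$ of $\Gamma$, i.e. when $\alpha(\xi+d_{i})-\alpha(\xi)=0$ for $i=1,2$, which is precisely $\alpha\in Per(0,0)$. For (2), if $\alpha,\beta\in Per(m,n)$ then $\alpha-\beta$ kills both jumps, so $\alpha-\beta\in Per(0,0)$, while adding an element of $Per(0,0)$ to an element of $Per(m,n)$ stays in $Per(m,n)$; hence $Per(m,n)$ is an affine subspace of $C^{\infty}(\Real^{2})$ directed by $Per(0,0)$. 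The derivative statement in (2) then follows by differentiating $\alpha(\xi+d_{i})=\alpha(\xi)+2m_{i}\pi$ along $Y$: the constant $2m_{i}\pi$ drops out, so $Y(\alpha)$ has vanishing $d_{i}$-jumps, i.e. $Y(\alpha)\in Per(0,0)$.

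Next I would prove (3) and (4) together. Given $V\in\X^{1}(T^{2})$, the pullback $V\circ\pi$ is a $\Gamma$-invariant unit vector field on $\Real^{2}$, and $\xi\mapsto\big(g(V\circ\pi,S\circ\pi),g(V\circ\pi,W\circ\pi)\big)(\xi)$ is a smooth map from $\Real^{2}$ into the unit circle; since $\Real^{2}$ is simply connected it lifts through $\Real\to\Real/2\pi\mathbb{Z}$ to a smooth angular function $\alpha\colon\Real^{2}\to\Real$, unique up to an additive constant in $2\pi\mathbb{Z}$. Because $V$, $S$, $W$ all live on $T^{2}$, this angular data is $\Gamma$-invariant modulo $2\pi$, so $\xi\mapsto\alpha(\xi+d_{i})-\alpha(\xi)$ is a continuous $2\pi\mathbb{Z}$-valued function on the connected set $\Real^{2}$, hence a constant $2m_{i}\pi$; thus $\alpha\in Per(m_{1},m_{2})\subseteq\mathcal{W}$, which, together with the uniqueness up to $2\pi\mathbb{Z}$, gives (3). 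Conversely, for any $\alpha\in Per(m,n)$ the field $\widehat V=(\cos\alpha)\,S\circ\pi+(\sin\alpha)\,W\circ\pi$ on $\Real^{2}$ is $\Gamma$-invariant (the trigonometric functions are $2\pi$-periodic and $S\circ\pi,W\circ\pi$ descend), so it pushes down to a $V\in\X^{1}(T^{2})$ with $\alpha$ as angle function; combined with (3) this yields (4).

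For (5), well-definedness of $htp^{(S,W)}$ is immediate from (3), and surjectivity onto $\mathbb{Z}^{2}$ from (4). For ``homotopic $\Rightarrow$ equal index'', given a homotopy $\{V_{t}\}_{t\in[0,1]}$ in $\X^{1}(T^{2})$ I would lift the joint angular data on the simply connected space $[0,1]\times\Real^{2}$ to a continuous family $\alpha_{t}\in C^{\infty}(\Real^{2})$; then the integers $m_{i}(t)$ defined by $\alpha_{t}(\xi+d_{i})-\alpha_{t}(\xi)=2m_{i}(t)\pi$ depend continuously on $t$ and so are constant, giving $htp^{(S,W)}(V_{0})=htp^{(S,W)}(V_{1})$. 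For the converse, if $\alpha_{0},\alpha_{1}$ are angle functions of $V_{0},V_{1}$ with the same index $(m,n)$, then $\alpha_{1}-\alpha_{0}\in Per(0,0)$ by (2), so the straight segment $\alpha_{t}=(1-t)\alpha_{0}+t\alpha_{1}$ stays in $Per(m,n)$; by (4) it produces a smooth homotopy $V_{t}$ from $V_{0}$ to $V_{1}$. Hence $htp^{(S,W)}$ induces a bijection from homotopy classes onto $\mathbb{Z}^{2}$. Finally, if $(S',W')$ is another positively oriented orthonormal frame, its $(S,W)$-angle function lies in some $Per(p,q)$, and the $(S',W')$-angle function of any $V$ differs from its $(S,W)$-angle function by that fixed function, so $htp^{(S',W')}=htp^{(S,W)}-(p,q)$: a translation of $\mathbb{Z}^{2}$ permuting the (intrinsic) homotopy classes while relabelling their indices.

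The main obstacle is concentrated in the lifting steps of (3)--(5): verifying that the angular data of a unit vector field is smooth, that it (and a whole homotopy of unit vector fields) lifts smoothly to $\Real$ via simple connectedness of $\Real^{2}$ and of $[0,1]\times\Real^{2}$, and that the resulting $d_{i}$-jumps are locally constant in the base point and in the homotopy parameter. The conceptual point that makes the nontrivial half of (5) work is that linear interpolation of two angle functions sharing an index remains admissible precisely because, by (2), $Per(m,n)$ is an affine subspace with direction $Per(0,0)$.
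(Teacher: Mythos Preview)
The paper does not prove this lemma: it is stated with the phrase ``We remind the following Lemma'' and cited from \cite{DragomirPerro11} and \cite{Wiegm95}, with no argument given. Your proposal is therefore not comparable to a proof in the paper, but it is a correct and standard treatment of the result and matches the approach taken in the cited references (covering-space lifting for (3)--(4), continuity of integer-valued jumps and affine interpolation in $Per(m,n)$ for (5)).

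One small point worth tightening in (2): the implication $\alpha\in Per(m,n)\Rightarrow Y(\alpha)\in Per(0,0)$, as literally stated for \emph{all} $Y\in\X(\Real^{2})$, is false (take $\alpha=\xi_{1}$ and $Y=\xi_{1}\partial_{\xi_{1}}$). Your differentiation argument implicitly uses that $Y$ is $\Gamma$-invariant, which is the only case ever needed (the vector fields $\widehat{S},\widehat{W}$ that appear later are lifts from $T^{2}$). This is a harmless imprecision inherited from the statement itself, but you should flag the hypothesis you are actually using.
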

We end this section by computing and studying the critical points of the bienergy functional of unit vector fields on $(T^{2}, g)$ defined on the set $\mathcal{W}$ of all angle functions. Fix an orthonormal frame field $\{S, W\}$ of $(T^{2}, g)$. The Levi-Civita connection $\nabla$ of $(T^{2}, g)$ is given by
\begin{equation}\label{connection S W torus}
\begin{array}{llcrr}
\nabla_{S}S=\alpha W, & \nabla_{S}W=-\alpha S, & \nabla_{W}S=b W, &\nabla_{W}W=-b S, &\di(Z)=S(\alpha)+W(b),
\end{array}
\end{equation}
where $Z=\alpha S+b W \in \X(T^{2})$. We consider the 1-forms $\Theta_{S}, \Theta_{W}\in\Omega^{1}(T^{2})$ given by
\begin{equation*}
\begin{array}{lr}
\Theta_{S}=g(S, \cdot), & \Theta_{W}=g(W, \cdot).
\end{array}
\end{equation*}
The volume form $\omega_{T}$ is given by $\omega_{T}=\Theta_{S}\wedge \Theta_{W}\in \Omega^{2}(T^{2})$. Let $V$ be a unit vector field on $(T^{2}, g)$ and $\theta \in \mathcal{W}$ its angle function. Then,
\begin{equation*}
\widehat{V}\circ \pi=(\cos\theta) S\circ \pi+(\sin \theta) W\circ \pi,
\end{equation*}
where $\pi:\Real^{2} \rightarrow T^{2}$ is the projection. We set $\widehat{g}=\pi^{*}g$. We denote by $\widehat{S}, \widehat{W}, \widehat{V}$ vector fields on $\Real^{2}$ which are $\pi$ - related to $S, W, V$ i.e. $d\pi_{p}(\widehat{S}(p))=S(\pi(p))$ for all $ p \in \Real^{2}$. The hat symbol indicates dependence on the pullback metric $\widehat{g}$. We have
\begin{lem}
Let $V$ be a unit vector field on $(T^{2}, g)$ and $\theta$ its angle function. Then,
\begin{eqnarray}\label{Eq: Laplacian nabla hat V}
\widehat{\bar{\Delta}}\widehat{V}-\widehat{g}(\widehat{\bar{\Delta}}\widehat{V}, \widehat{V})\widehat{V}&=&\Big\{-(\sin\theta) \widehat{\Delta}\theta + (\sin\theta) [\widehat{W}(b\circ \pi)+\widehat{S}(\alpha \circ \pi)]\Big\} \widehat{S}+\nonumber\\
&&+\Big\{(\cos\theta) \widehat{\Delta}\theta - (\cos\theta) [\widehat{S}(\alpha\circ \pi)+\widehat{W}(b\circ \pi)]\Big\}\widehat{W},\\
\widehat{\nabla}_{\widehat{V}}\widehat{V}-\widehat{\di}\widehat{V}\widehat{V}&=&\Big\{-\widehat{W}(\theta)-b\circ \pi\Big\}\widehat{S}+\Big\{\alpha \circ \pi+\widehat{S}(\theta)\Big\}\widehat{W}.\nonumber
\end{eqnarray}
\end{lem}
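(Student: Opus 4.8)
The statement is a direct computation carried out with the lifted orthonormal frame $\{\widehat{S},\widehat{W}\}$ on $\Real^{2}$, so the plan is essentially bookkeeping. Since $\pi$ is a Riemannian covering for $\widehat{g}=\pi^{*}g$, the connection $\widehat{\nabla}$ inherits the structure equations (\ref{connection S W torus}); that is, $\widehat{\nabla}_{\widehat{S}}\widehat{S}=(\alpha\circ\pi)\widehat{W}$, $\widehat{\nabla}_{\widehat{S}}\widehat{W}=-(\alpha\circ\pi)\widehat{S}$, $\widehat{\nabla}_{\widehat{W}}\widehat{S}=(b\circ\pi)\widehat{W}$, $\widehat{\nabla}_{\widehat{W}}\widehat{W}=-(b\circ\pi)\widehat{S}$, and $J$ (which is parallel for $\widehat{\nabla}$, as on any oriented Riemannian surface) still satisfies $J\widehat{S}=\widehat{W}$. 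Put $p:=\widehat{S}(\theta)+\alpha\circ\pi$ and $q:=\widehat{W}(\theta)+b\circ\pi$. Writing $\widehat{V}=(\cos\theta)\widehat{S}+(\sin\theta)\widehat{W}$ and $J\widehat{V}=-(\sin\theta)\widehat{S}+(\cos\theta)\widehat{W}$, a Leibniz expansion together with these structure equations gives $\widehat{\nabla}_{\widehat{S}}\widehat{V}=p\,J\widehat{V}$ and $\widehat{\nabla}_{\widehat{W}}\widehat{V}=q\,J\widehat{V}$, whence $\widehat{\di}\widehat{V}=-p\sin\theta+q\cos\theta$ and $\widehat{\nabla}_{\widehat{V}}\widehat{V}=(\cos\theta)\widehat{\nabla}_{\widehat{S}}\widehat{V}+(\sin\theta)\widehat{\nabla}_{\widehat{W}}\widehat{V}=(p\cos\theta+q\sin\theta)\,J\widehat{V}$.

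For the second displayed identity I would simply form $\widehat{\nabla}_{\widehat{V}}\widehat{V}-(\widehat{\di}\widehat{V})\widehat{V}=(p\cos\theta+q\sin\theta)\,J\widehat{V}-(-p\sin\theta+q\cos\theta)\,\widehat{V}$ and re-expand it in the frame $\{\widehat{S},\widehat{W}\}$; after simplification using $\cos^{2}\theta+\sin^{2}\theta=1$ this equals exactly $\{-\widehat{W}(\theta)-b\circ\pi\}\widehat{S}+\{\alpha\circ\pi+\widehat{S}(\theta)\}\widehat{W}$.

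For the first identity I would compute $\widehat{\bar{\Delta}}\widehat{V}=\widehat{\nabla}_{\widehat{\nabla}_{\widehat{S}}\widehat{S}}\widehat{V}+\widehat{\nabla}_{\widehat{\nabla}_{\widehat{W}}\widehat{W}}\widehat{V}-\widehat{\nabla}_{\widehat{S}}\widehat{\nabla}_{\widehat{S}}\widehat{V}-\widehat{\nabla}_{\widehat{W}}\widehat{\nabla}_{\widehat{W}}\widehat{V}$ term by term. Differentiating once more and using $\widehat{\nabla}J=0$ (so $\widehat{\nabla}_{\widehat{S}}(J\widehat{V})=J(\widehat{\nabla}_{\widehat{S}}\widehat{V})=-p\,\widehat{V}$, and likewise along $\widehat{W}$) gives $\widehat{\nabla}_{\widehat{S}}\widehat{\nabla}_{\widehat{S}}\widehat{V}=\widehat{S}(p)\,J\widehat{V}-p^{2}\widehat{V}$ and $\widehat{\nabla}_{\widehat{W}}\widehat{\nabla}_{\widehat{W}}\widehat{V}=\widehat{W}(q)\,J\widehat{V}-q^{2}\widehat{V}$, while the connection corrections are $\widehat{\nabla}_{\widehat{\nabla}_{\widehat{S}}\widehat{S}}\widehat{V}=(\alpha\circ\pi)q\,J\widehat{V}$ and $\widehat{\nabla}_{\widehat{\nabla}_{\widehat{W}}\widehat{W}}\widehat{V}=-(b\circ\pi)p\,J\widehat{V}$. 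Hence $\widehat{\bar{\Delta}}\widehat{V}=R\,J\widehat{V}+(p^{2}+q^{2})\widehat{V}$, where $R=(\alpha\circ\pi)q-(b\circ\pi)p-\widehat{S}(p)-\widehat{W}(q)$. Since $J\widehat{V}\perp\widehat{V}$ and $\|\widehat{V}\|=1$, we have $\widehat{g}(\widehat{\bar{\Delta}}\widehat{V},\widehat{V})=p^{2}+q^{2}$, so $\widehat{\bar{\Delta}}\widehat{V}-\widehat{g}(\widehat{\bar{\Delta}}\widehat{V},\widehat{V})\widehat{V}=R\,J\widehat{V}$. Substituting the values of $p$ and $q$ into $R$, the two occurrences of $(\alpha\circ\pi)(b\circ\pi)$ cancel and $R=\bigl[(\alpha\circ\pi)\widehat{W}(\theta)-(b\circ\pi)\widehat{S}(\theta)-\widehat{S}\widehat{S}(\theta)-\widehat{W}\widehat{W}(\theta)\bigr]-\bigl[\widehat{S}(\alpha\circ\pi)+\widehat{W}(b\circ\pi)\bigr]$; the first bracket is precisely the Laplacian $\widehat{\Delta}\theta$ read off in the frame $\{\widehat{S},\widehat{W}\}$ from the lifted structure equations. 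Expanding $R\,J\widehat{V}=R\bigl(-(\sin\theta)\widehat{S}+(\cos\theta)\widehat{W}\bigr)$ then produces the first displayed formula.

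There is no conceptual obstacle; the step demanding most care will be the second-order computation and, above all, keeping the sign conventions consistent --- $\widehat{\bar{\Delta}}=-\tr\widehat{\nabla}^{2}$, $J^{2}=-\id$, $J\widehat{S}=\widehat{W}$ --- together with recognising the combination $(\alpha\circ\pi)\widehat{W}(\theta)-(b\circ\pi)\widehat{S}(\theta)-\widehat{S}\widehat{S}(\theta)-\widehat{W}\widehat{W}(\theta)$ as $\widehat{\Delta}\theta$.
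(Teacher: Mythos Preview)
Your argument is correct. The key organizing idea---writing $\widehat{\nabla}_{\widehat{S}}\widehat{V}=p\,J\widehat{V}$, $\widehat{\nabla}_{\widehat{W}}\widehat{V}=q\,J\widehat{V}$ and then using $\widehat{\nabla}J=0$ so that $\widehat{\bar{\Delta}}\widehat{V}$ is automatically of the form $R\,J\widehat{V}+(p^{2}+q^{2})\widehat{V}$---is sound, and the identification of the first bracket in $R$ with $\widehat{\Delta}\theta$ via the structure equations is right on the nose.

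The paper proceeds differently: it stays in the fixed frame $\{\widehat{S},\widehat{W}\}$ throughout, first computing $\widehat{\bar{\Delta}}\widehat{S}$, $\widehat{\bar{\Delta}}\widehat{W}$, $\widehat{\Delta}(\cos\theta)$, $\widehat{\Delta}(\sin\theta)$ separately, and then assembling $\widehat{\bar{\Delta}}\widehat{V}$ through the product rule $\widehat{\bar{\Delta}}(fX)=(\widehat{\Delta}f)X+f\widehat{\bar{\Delta}}X-2\widehat{\nabla}_{\widehat{\grad}f}X$; the component orthogonal to $\widehat{V}$ is extracted only after the full expression has been written out. Your approach, by contrast, works in the moving frame $\{\widehat{V},J\widehat{V}\}$ from the start and exploits the parallelism of $J$ to keep the tangential and normal parts separated at every step. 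This buys you a considerably shorter and more transparent computation (no six-term expansions of $\widehat{\bar{\Delta}}\widehat{V}$ in $\widehat{S},\widehat{W}$-components), at the small cost of having to justify $\widehat{\nabla}J=0$ and to recognise $\widehat{\Delta}\theta$ at the end. The paper's route is more mechanical but makes every coefficient explicit; yours is more conceptual and scales better.
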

\begin{proof}
Since $d\pi$ is an isometry, we have
\begin{equation*}
\widehat{\nabla}_{\widehat{X}}\widehat{Y}=\nabla_{X}Y \circ \pi,
\end{equation*}
where $\widehat{X}, \widehat{Y}\in \X(\Real^{2})$ are $\pi$-related to $X, Y \in \X(T^{2})$.
By using relations (\ref{connection S W torus}), we get
\begin{eqnarray}\label{Eq: homotopy biharmonic I}
\widehat{\bar{\Delta}}\widehat{S}&=&\widehat{\nabla}_{\widehat{\nabla}_{\widehat{S}}\widehat{S}}\widehat{S}-\widehat{\nabla}_{\widehat{S}}\widehat{\nabla}_{\widehat{S}}\widehat{S}+
\widehat{\nabla}_{\widehat{\nabla}_{\widehat{W}}\widehat{W}}\widehat{S}-\widehat{\nabla}_{\widehat{W}}\widehat{\nabla}_{\widehat{W}}\widehat{S}\nonumber\\
&=&(\alpha \circ \pi)\widehat{\nabla}_{\widehat{W}}\widehat{S}-(b\circ\pi)\widehat{\nabla}_{\widehat{S}}\widehat{S}-\widehat{\nabla}_{\widehat{S}}[(\alpha\circ\pi)\widehat{W}]-\widehat{\nabla}_{\widehat{W}}[(b\circ \pi)\widehat{W}]\nonumber\\
&=&\Big\{(\alpha \circ \pi)^{2}+(b\circ \pi)^{2}\Big\}\widehat{S}+\Big\{-\widehat{S}(\alpha\circ \pi)-\widehat{W}(b\circ \pi)\Big\}\widehat{W},\nonumber\\
\widehat{\bar{\Delta}}\widehat{W}&=&\Big\{\widehat{W}(b\circ \pi)+\widehat{S}(\alpha\circ \pi)\Big\}\widehat{S}+\Big\{(\alpha \circ \pi)^{2}+(b\circ \pi)^{2}\Big\}\widehat{W},\nonumber\\
\widehat{\Delta}(\cos\theta)&=&(\widehat{\nabla}_{\widehat{S}}\widehat{S})(\cos\theta)-\widehat{S}\widehat{S}(\cos\theta)+
(\widehat{\nabla}_{\widehat{W}}\widehat{W})(\cos\theta)-\widehat{W}\widehat{W}(\cos\theta),\nonumber\\
&=&(\cos\theta)\|\widehat{\grad}\theta\|^{2}-(\sin\theta) \widehat{\Delta}\theta,\nonumber\\
\widehat{\Delta}(\sin\theta)&=&(\sin\theta)\|\widehat{\grad}\theta\|^{2}+(\cos\theta)\widehat{\Delta}\theta.
\end{eqnarray}
Combining (\ref{connection S W torus}), (\ref{Eq: homotopy biharmonic I}) and $\widehat{V}=(\cos\theta) \widehat{S}+(\sin \theta) \widehat{W}$, we compute
\begin{eqnarray}
\widehat{\bar{\Delta}}\widehat{V}&=&\widehat{\Delta}(\cos\theta)\widehat{S}+(\cos\theta) \widehat{\bar{\Delta}}\widehat{S}-
2\widehat{\nabla}_{\widehat{\grad}(\cos\theta)}\widehat{S}+\widehat{\Delta}(\sin\theta)\widehat{W}+(\sin\theta)\widehat{\bar{\Delta}}\widehat{W}-
2\widehat{\nabla}_{\widehat{\grad}(\sin\theta)}\widehat{W}\nonumber\\
&=&\Big\{(\cos\theta)\|\widehat{\grad}\theta\|^{2}-(\sin\theta)\widehat{\Delta}\theta+(\cos\theta)[(\alpha\circ\pi)^2+(b\circ\pi)^{2}]+(\sin\theta)[\widehat{W}(b\circ\pi)+
\widehat{S}(\alpha\circ\pi)]\nonumber\\
&&+2\widehat{S}(\theta)(\alpha\circ \pi)\cos\theta+2\widehat{W}(\theta)(b\circ\pi)\cos\theta\Big\}\widehat{S}+\Big\{(\sin\theta)\|\widehat{\grad}\theta\|^{2}+(\cos\theta)\widehat{\Delta}\theta+
(\sin\theta)[(\alpha\circ\pi)^{2}\nonumber\\
&&+(b\circ\pi)^{2}]-(\cos\theta)[\widehat{S}(\alpha\circ\pi)+\widehat{W}(b\circ\pi)]+2\sin\theta(\alpha\circ\pi)\widehat{S}(\theta)+2\sin \theta(b\circ\pi)\widehat{W}(\theta)\Big\}\widehat{W},\nonumber\\
\widehat{g}(\widehat{\bar{\Delta}}\widehat{V}, \widehat{V})&=&\|\widehat{\grad}\theta\|^{2}+(\alpha\circ\pi)^{2}+(b\circ\pi)^{2}+2\widehat{S}(\theta)(\alpha\circ\pi)+2\widehat{W}(\theta)(b\circ\pi),\nonumber\\
\widehat{\nabla}_{\widehat{V}}\widehat{V}&=&\Big[-\sin\theta\cos\theta \widehat{S}(\theta)-\sin\theta\cos\theta (\alpha\circ\pi)-(\sin^{2}\theta)\widehat{W}(\theta)-
(b\circ\pi)\sin^{2}\theta\Big]\widehat{S}\nonumber\\
&&+\Big[\cos^{2}\theta (\alpha\circ\pi)+(\cos^{2}\theta)\widehat{S}(\theta)+\sin\theta\cos\theta (b\circ\pi)+\sin\theta\cos\theta \widehat{W}(\theta)\Big]\widehat{W},\nonumber\\
\widehat{\di} \widehat{V}&=&(\cos\theta)\widehat{\di} \widehat{S}+\widehat{S}(\cos\theta)+(\sin\theta)\widehat{\di} \widehat{W}+\widehat{W}(\sin\theta)\nonumber\\
&=&\cos\theta (b\circ\pi)-(\sin\theta)\widehat{S}(\theta)-\sin\theta (\alpha\circ\pi)+(\cos\theta)\widehat{W}(\theta).\nonumber
\end{eqnarray}
and we immediately obtain (\ref{Eq: Laplacian nabla hat V}).
\end{proof}
Let $Q=\{s d_{1}+t d_{2}\in\Real^{2}:(s, t)\in[0, 1]^{2}\}$. Then $\pi(Q)=T^{2}$ and $\pi:Q \setminus \partial Q\rightarrow T^{2}$ is injective. By Lemma 5.5, we have
\begin{eqnarray}\label{Eq: bienergy functionl angle torus}
E_{2}(V)&=&\frac{1}{2}\int_{T^{2}}\Big\{\|\bar{\Delta}V-g(\bar{\Delta}V, V)V\|_{g}^{2}+k_{g}^{2}\|\nabla_{V}V-(\di V) V\|_{g}^{2}\Big\}v_{g}\nonumber\\
&=&\int_{Q}\Big\{\|\widehat{\bar{\Delta}}\widehat{V}-\widehat{g}(\widehat{\bar{\Delta}}\widehat{V}, \widehat{V})\widehat{V}\|_{\widehat{g}}^{2}
+(k_{g}\circ\pi)^{2}\|\widehat{\nabla}_{\widehat{V}}\widehat{V}-(\widehat{\di} \widehat{V}) \widehat{V}\|_{\widehat{g}}^{2}\Big\}\pi^{*}(\omega_{T})\nonumber\\
&=&\int_{Q}\Big\{(\widehat{\Delta}\theta-[\widehat{S}(\alpha\circ\pi)+\widehat{W}(b\circ\pi)])^{2}
+(k_{g}\circ\pi)^{2}[\widehat{W}(\theta)+(b\circ\pi)]^{2}\nonumber\\
&&+(k_{g}\circ\pi)^{2}[\widehat{S}(\theta)+(\alpha\circ\pi)^{2}]\Big\}\pi^{*}(\omega_{T})\\
&=&\int_{Q}\Big\{(\widehat{\Delta}\theta-\widehat{\di}\widehat{Z})^{2}+(k_{g}\circ\pi)^{2}\|\widehat{\grad}\theta+\widehat{Z}\|_{\widehat{g}}^{2}\Big\}\pi^{*}(\omega_{T})\nonumber\\
&=&G(\theta)\nonumber,
\end{eqnarray}
where $v_{g}=2\pi^{*}(\omega_{T})$ and $\widehat{Z}=(\alpha \circ \pi)\widehat{S}+(b\circ\pi)\widehat{W}$ (\cite{Wiegm95}). In the next Theorem, we give the critical point condition for the functional $G:\mathcal{W}\rightarrow [0, +\infty)$ defined on the set $\mathcal{W}$ of all angle functions.
\begin{thm}
Let $(T^{2}, g)$ be a 2-torus, $k_{g}$ its Gaussian curvature and $\{S, W\}$ an orthonormal frame field of $(T^{2}, g)$. Then,
(i) $V \in \X^{1}(T^{2})$ is a biharmonic unit vector field if and only if every $(S, W)$-angle $\theta$ of $V$ satisfies
\begin{equation}\label{Eq: angle PDE}
\widehat{\Delta}(\widehat{\Delta}\theta)-\widehat{\di}\Big((k_{g}\circ\pi)^{2}\widehat{\grad}\theta\Big)=\widehat{\Delta}(\widehat{\di}\widehat{Z})+\widehat{\di}\Big(
(k_{g}\circ\pi)^{2}\widehat{Z}\Big),
\end{equation}
where $\widehat{Z}=(\alpha \circ \pi)\widehat{S}+(b\circ\pi)\widehat{W}$.\\
(ii)There exists a biharmonic unit vector field in each homotopy class $\mathcal{E}_{(m. n)}^{(S, W)}$.
\end{thm}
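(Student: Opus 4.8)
The plan is to reduce the variational problem on $\X^1(T^2)$ to a variational problem for the angle function by means of the identity $E_2(V)=G(\theta)$ established in \eqref{Eq: bienergy functionl angle torus}, then to read off the Euler--Lagrange equation of $G$ for part (i) and, for part (ii), to solve that equation in each affine space $Per(m,n)$ by standard elliptic theory on the closed surface $(T^2,g)$.

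\emph{Part (i).} By Lemma 5.5, a smooth variation $V_t$ of $V$ inside $\X^1(T^2)$ stays in the homotopy class of $V$, so all the $(S,W)$-angle functions $\theta_t$ lie in one fixed affine space $Per(m,n)$; hence the variation field $\phi:=\left.\frac{d}{dt}\right|_{t=0}\theta_t$ belongs to $Per(0,0)$, i.e.\ descends to a smooth function on $T^2$, and conversely every such $\phi$ is realized. Differentiating \eqref{Eq: bienergy functionl angle torus} under the integral sign in the direction $\phi$ yields
\[
\frac{1}{2}\left.\frac{d}{dt}\right|_{t=0}G(\theta+t\phi)=\int_Q\Big\{(\widehat{\Delta}\theta-\widehat{\di}\widehat{Z})\,\widehat{\Delta}\phi+(k_g\circ\pi)^2\,\widehat{g}(\widehat{\grad}\theta+\widehat{Z},\,\widehat{\grad}\phi)\Big\}\pi^*(\omega_T).
\]
Every factor here descends to $T^2$ — in particular $\widehat{\grad}\theta$ and $\widehat{\Delta}\theta=-\widehat{\di}(\widehat{\grad}\theta)$, since $\widehat{S}(\theta),\widehat{W}(\theta)\in Per(0,0)$ — so the integral equals the corresponding integral over $T^2$, on which self-adjointness of $\widehat{\Delta}$ and the divergence theorem move $\widehat{\Delta}$ and $\widehat{\di}$ off $\phi$ without boundary terms. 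The first variation then becomes $\int_{T^2}\phi\,\big[\widehat{\Delta}(\widehat{\Delta}\theta-\widehat{\di}\widehat{Z})-\widehat{\di}((k_g\circ\pi)^2(\widehat{\grad}\theta+\widehat{Z}))\big]v_g$, whose vanishing for all $\phi\in C^\infty(T^2)$ is precisely \eqref{Eq: angle PDE}. Since \eqref{Eq: angle PDE} involves only derivatives of $\theta$, it is insensitive to the $2\pi\mathbb{Z}$-ambiguity of the angle function, so it holds for one $(S,W)$-angle of $V$ iff it holds for all of them.

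\emph{Part (ii).} Fix $(m,n)\in\mathbb{Z}^2$ and, using Lemma 5.5, an angle function $\theta_0\in Per(m,n)$ (e.g.\ of any $V$ with $htp^{(S,W)}(V)=(m,n)$). I seek a solution of \eqref{Eq: angle PDE} of the form $\theta=\theta_0+h$ with $h\in C^\infty(T^2)$. Put
\[
Ph:=\widehat{\Delta}(\widehat{\Delta}h)-\widehat{\di}\big((k_g\circ\pi)^2\widehat{\grad}h\big),
\]
a fourth-order self-adjoint elliptic operator on $C^\infty(T^2)$; note that $P\theta_0$ is a well-defined function on $T^2$ even though $\theta_0$ does not descend, since only $\widehat{\grad}\theta_0$ and $\widehat{\Delta}\theta_0$, which do descend, enter it. Then \eqref{Eq: angle PDE} is equivalent to $Ph=f$ with $f:=\widehat{\Delta}(\widehat{\di}\widehat{Z})+\widehat{\di}((k_g\circ\pi)^2\widehat{Z})-P\theta_0\in C^\infty(T^2)$. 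By the solvability theory for self-adjoint elliptic operators on a compact manifold (\cite[p.464]{Besse87}), $Ph=f$ admits a smooth solution if and only if $f\perp\Ker P$. Testing $Pv=0$ against $v$ and integrating by parts gives $\int_{T^2}(\widehat{\Delta}v)^2v_g+\int_{T^2}(k_g\circ\pi)^2\|\widehat{\grad}v\|^2v_g=0$, forcing $\widehat{\Delta}v=0$, hence $v$ constant; thus $\Ker P=\Real$. Finally $\int_{T^2}f\,v_g=0$, because each summand of $f$ is $\widehat{\Delta}(\cdot)$ or $\widehat{\di}(\cdot)$ of a globally defined function or vector field on $T^2$. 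Hence $f\perp\Ker P$, a solution $h$ exists, and $\theta=\theta_0+h\in Per(m,n)$ is the angle function of a biharmonic unit vector field in $\mathcal{E}^{(S,W)}_{(m,n)}$.

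\emph{Where the difficulty lies.} There is no genuine analytic obstacle; the work is organizational. The three points to handle with care are: (a) that variations of $V$ through unit vector fields correspond exactly to additive perturbations of $\theta$ by functions on $T^2$, so that the critical points of $G$ on $\mathcal{W}$ are indeed the biharmonic unit vector fields; (b) that $\widehat{\Delta}$ and $\widehat{\di}$ applied to the semiperiodic $\theta$ (and to $\widehat{Z}$) produce honest tensors on $T^2$, which legitimizes both the integration by parts in part (i) and the use of elliptic theory in part (ii); and (c) the identification $\Ker P=\Real$, which collapses the Fredholm alternative to the condition $\int_{T^2}f\,v_g=0$ — automatically satisfied because $f$ is a sum of divergence-type terms.
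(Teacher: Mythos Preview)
Your proposal is correct and follows essentially the same approach as the paper's own proof: for (i) you compute the first variation of $G$ and integrate by parts to obtain \eqref{Eq: angle PDE}, and for (ii) you subtract a fixed element of $Per(m,n)$, reduce to a self-adjoint elliptic equation $Ph=f$ on $T^2$, identify $\Ker P$ with the constants, and verify $\int_{T^2}f\,v_g=0$. The only differences are cosmetic (you are somewhat more explicit about why the integrands descend to $T^2$ and why the admissible variations are exactly the $\phi\in Per(0,0)$), and your $f$ agrees with the paper's once one unwinds $P\theta_0$ using $\widehat{\grad}\theta_0=Y\circ\pi$.
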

\begin{proof}
(i) Let $X:I\rightarrow \X^{1}(T^{2})$ be a smooth path in $\X^{1}(T^{2})$ such that $X_{0}=V$. We denote by $\theta_{t} \in C^{\infty}(\Real^{2}\times I)$ the angle function for $X_{t}$. We set $\dot{\theta}=\frac{\partial{\theta}}{\partial_{t}}$ and $\theta_{0}=\theta$. By using (\ref{Eq: bienergy functionl angle torus}),  we have
\begin{eqnarray}
\frac{d}{dt}G(\theta_{t})&=&2\int_{Q}\Big\{(\widehat{\Delta}\theta_{t}-\widehat{\di}\widehat{Z})\widehat{\Delta}\dot{\theta}+(k_{g}\circ\pi)^{2}\widehat{g}(\widehat{\grad}\dot{\theta}, \widehat{\grad}\theta_{t}+\widehat{Z})\Big\}\pi^{*}(\omega_{T})\nonumber\\
&=&2\int_{Q}\Big\{\widehat{\Delta}(\widehat{\Delta}\theta_{t}-\widehat{\di}\widehat{Z})\dot{\theta}+(k_{g}\circ \pi)^{2}(\widehat{\grad}\theta_{t}+\widehat{Z})(\dot{\theta})\Big\}\pi^{*}(\omega_{T})\nonumber\\
&=&2\int_{Q}\Big\{\widehat{\Delta}(\widehat{\Delta}\theta_{t}-\widehat{\di}\widehat{Z})\dot{\theta}
+\widehat{\di}[\dot{\theta}(k_{g}\circ\pi)^{2}(\widehat{\grad}\theta_{t}+\widehat{Z})]
-\dot{\theta}\widehat{\di}[(k_{g}\circ\pi)^{2}(\widehat{\grad}\theta_{t}+\widehat{Z})]\Big\}\pi^{*}(\omega_{T})\nonumber\\
&=&2\int_{Q}\Big\{\dot{\theta}\Big[\widehat{\Delta}(\widehat{\Delta}\theta_{t}-\widehat{\di}\widehat{Z})-\widehat{\di}[(k_{g}\circ\pi)^{2}(\widehat{\grad}\theta_{t}+
\widehat{Z})]\Big]\Big\}\pi^{*}(\omega_{T})\nonumber.
\end{eqnarray}
As a consequence, we get
\begin{equation}\label{Eq: variational formula torus homotopy class}
\frac{d}{dt}G(\theta_{t})|_{t=0}=2\int_{Q}\Big\{\beta \Big[\widehat{\Delta}(\widehat{\Delta}\theta-\widehat{\di}\widehat{Z})-\widehat{\di}[(k_{g}\circ\pi)^{2}(\widehat{\grad}\theta+
\widehat{Z})]\Big]\Big\}\pi^{*}(\omega_{T}),
\end{equation}
where $\beta(\xi)=\dot{\theta}(\xi, 0)$ for any $\xi\in \Real^{2}$. Then $V$ is a biharmonic unit vector field if and only if $\frac{d}{dt}G(\theta_{t})|_{t=0}=0$ for any smooth 1-parameter variation $\theta_{t}\in \mathcal{W}$ such that $\theta_{0}=\theta$. Setting
$\beta(\xi)=\widehat{\Delta}(\widehat{\Delta}\theta-\widehat{\di}\widehat{Z})(\xi)-\widehat{\di}[(k_{g}\circ\pi)^{2}(\widehat{\grad}\theta+
\widehat{Z})](\xi)$ in (\ref{Eq: variational formula torus homotopy class}), we get (\ref{Eq: angle PDE}).\\
(ii) Fix the homotopy class and consider all solutions $\theta \in Per(m, n)$  with $(m,n) \in \mathbb{Z}^{2}$. Choose any fixed $\theta_{1}\in Per(m, n)$, then $\theta-\theta_{1}\in Per(0, 0)$ and by Lemma 5.4 (Part 1) there is $\alpha \in C^{\infty}(T^{2})$ such that $\theta-\theta_{1}=\alpha\circ\pi$. Furthermore, there exists $Y\in \X(T^{2})$ such that $\widehat{\grad}\theta_{1}=Y\circ \pi$ and $\widehat{\Delta}\theta_{1}=-\di Y\circ \pi$ (\cite{Wiegm95}). Then,
\begin{equation}\label{Eq:helpful equations angle PDE}
\begin{array}{lr}
\widehat{\Delta}\theta=(\Delta\alpha-\di Y)\circ \pi, &\widehat{\di}((k_{g}\circ\pi)^{2}\widehat{\grad}\theta)=\di(k_{g}^{2}(Y+\grad\alpha))\circ\pi.
\end{array}
\end{equation}
By using (\ref{Eq:helpful equations angle PDE}), Equation (\ref{Eq: angle PDE}) is transformed into
\begin{equation}\label{Eq:helpful equations angle PDE I}
\Delta(\Delta\alpha)-\di(k_{g}^{2}\grad\alpha)=\Delta(\di(Y+Z))+\di(k_{g}^{2}(Y+Z))=f\in C^{\infty}(T^{2}).
\end{equation}
Note that $\int_{T^{2}}fv_{g}=0$. Consider the self-adjoint elliptic operator $\widehat{P}(\alpha)=\Delta(\Delta\alpha)-\di(k_{g}^{2}\grad\alpha)$. Equation (\ref{Eq:helpful equations angle PDE I}) becomes
\begin{equation}\label{Eq:helpful equations angle PDE II}
\hat{P}(\alpha)=f.
\end{equation}
By using similar arguments as in the proof of Part (ii) of Theorem 5.3, we conclude that $v\in \Ker\widehat{P}$ ($v\in C^{\infty}(T^{2})$) if and only if $v$ is a constant. Therefore, Equation (\ref{Eq:helpful equations angle PDE II}) has a solution  if and only if $\int_{T^{2}}fv_{g}=0$, which holds. Denote by $\alpha_{0}$ this solution, then $\theta=\theta_{1}+\alpha_{0}\circ\pi$ is a solution to Equation (\ref{Eq:helpful equations angle PDE I}).
\end{proof}
\begin{rem}
In general, if $(T^{2}, g)$ is not flat, the biharmonic unit vector fields we obtain in Theorem 5.3 ii) and Theorem 5.6 ii) are not harmonic. The solutions obtained in Theorem 5.3 ii) are harmonic unit vector fields (i.e. the sections found by Wiegmink) precisely when the functions $u$ and $\Delta u$ have (pointwise) collinear gradients (since $k_{g}=-\Delta u$), as for example if $u$ is an eigenfunction of the Laplacian.
\end{rem}
\begin{thm}
On a 2-torus $(T^{2}, g)$, the Hessian form of $E_{2}$ is positive semi-definite at all critical points $V$ of $E_{2}$ i.e. $\frac{d^{2}}{dt^{2}}\{E_{2}(X(t))\}_{t=0}\geq 0$ for all $C^{\infty}$-variations $X(t)$($t\in (-\epsilon, \epsilon), \epsilon > 0$) of $V$ within $\X^{1}(T^{2})$.
\end{thm}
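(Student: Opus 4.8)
The plan is to run the whole computation at the level of angle functions, exploiting the identity $E_{2}(V)=G(\theta)$ established in (\ref{Eq: bienergy functionl angle torus}). Fix a critical point $V$, an orthonormal frame $\{S,W\}$ of $(T^{2},g)$ and an $(S,W)$-angle $\theta$ of $V$, say $\theta\in Per(m,n)$. A $C^{\infty}$-variation $X(t)$ of $V$ inside $\X^{1}(T^{2})$ lifts to a smooth one-parameter family $\theta_{t}$ of angle functions with $\theta_{0}=\theta$; since the homotopy class is locally constant, $\theta_{t}\in Per(m,n)$ for all $t$, and because $Per(m,n)$ is an affine subspace of $C^{\infty}(\Real^{2})$ with associated vector space $Per(0,0)$ (Lemma 5.4, part (2)), the derivatives $\dot{\theta}_{t}$ and $\ddot{\theta}_{t}$ lie in $Per(0,0)$; in particular $\beta:=\dot{\theta}_{0}$ descends to a smooth function on $T^{2}$ (Lemma 5.4, part (1)). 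Since $E_{2}(X(t))=G(\theta_{t})$ exactly, it suffices to show $\frac{d^{2}}{dt^{2}}\big|_{t=0}G(\theta_{t})\ge 0$.

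The decisive observation is that $G$ is an inhomogeneous \emph{quadratic} functional of $\theta$: as $\widehat{Z}=(\alpha\circ\pi)\widehat{S}+(b\circ\pi)\widehat{W}$ depends only on the metric and the frame and not on $\theta$, we may write $G(\theta)=Q(\theta,\theta)+\ell(\theta)+c$ with $\ell$ linear, $c$ constant, and
\begin{equation*}
Q(\theta_{1},\theta_{2})=\int_{Q}\Big\{\widehat{\Delta}\theta_{1}\,\widehat{\Delta}\theta_{2}+(k_{g}\circ\pi)^{2}\,\widehat{g}(\widehat{\grad}\theta_{1},\widehat{\grad}\theta_{2})\Big\}\pi^{*}(\omega_{T}).
\end{equation*}
Differentiating $G(\theta_{t})=Q(\theta_{t},\theta_{t})+\ell(\theta_{t})+c$ twice and evaluating at $t=0$ gives
\begin{equation*}
\frac{d^{2}}{dt^{2}}\Big|_{t=0}G(\theta_{t})=\big(2Q(\theta,\ddot{\theta}_{0})+\ell(\ddot{\theta}_{0})\big)+2Q(\beta,\beta).
\end{equation*}

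The first bracket is the first variation $\frac{d}{ds}\big|_{s=0}G(\theta+s\ddot{\theta}_{0})$ along the admissible direction $\ddot{\theta}_{0}\in Per(0,0)$, and by the variational formula (\ref{Eq: variational formula torus homotopy class}) it equals $2\int_{Q}\ddot{\theta}_{0}\big[\widehat{\Delta}(\widehat{\Delta}\theta-\widehat{\di}\widehat{Z})-\widehat{\di}((k_{g}\circ\pi)^{2}(\widehat{\grad}\theta+\widehat{Z}))\big]\pi^{*}(\omega_{T})$, which vanishes since $V$ being a biharmonic unit vector field means $\theta$ satisfies (\ref{Eq: angle PDE}) by Theorem 5.6(i). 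Hence
\begin{equation*}
\frac{d^{2}}{dt^{2}}\Big|_{t=0}E_{2}(X(t))=2Q(\beta,\beta)=2\int_{Q}\Big\{(\widehat{\Delta}\beta)^{2}+(k_{g}\circ\pi)^{2}\|\widehat{\grad}\beta\|_{\widehat{g}}^{2}\Big\}\pi^{*}(\omega_{T})\ge 0,
\end{equation*}
because $(k_{g}\circ\pi)^{2}\ge 0$. This is the asserted positive semi-definiteness; equivalently $E_{2}$ restricted to each homotopy class $\mathcal{E}_{(m,n)}^{(S,W)}$ is convex, so the biharmonic unit vector fields produced in Theorem 5.6(ii) are in fact minimizers of $E_{2}$ in their class. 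There is no genuine obstacle: the only point requiring care is the bookkeeping of the descent of $\widehat{\grad}\theta$ and $\widehat{\Delta}\theta$ to globally defined objects on $T^{2}$ and the resulting legitimacy of the integrations by parts on $Q$, but this is exactly what Lemma 5.4 and Wiegmink's analysis (already used in Theorem 5.6) provide.
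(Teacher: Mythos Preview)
Your proof is correct and follows essentially the same route as the paper: pass to angle functions via $E_{2}=G$, differentiate twice, use the Euler--Lagrange equation (\ref{Eq: angle PDE}) to kill the $\ddot{\theta}_{0}$ terms, and arrive at the manifestly non-negative integral $2\int_{Q}\{(\widehat{\Delta}\beta)^{2}+(k_{g}\circ\pi)^{2}\|\widehat{\grad}\beta\|_{\widehat{g}}^{2}\}\pi^{*}(\omega_{T})$. Your explicit observation that $G$ is an inhomogeneous quadratic in $\theta$ is a clean way to organize the computation and spares you the final integration by parts the paper performs, but the underlying argument is the same.
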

\begin{proof}
We assume that $V$ is a biharmonic unit vector field on $(T^{2}, g)$. Let $X:I\rightarrow \X^{1}(T^{2})$ be a smooth path in $\X^{1}(T^{2})$ such that $X_{0}=V$. We denote by $\theta_{t} \in C^{\infty}(\Real^{2}\times I)$ the angle function for $X_{t}$. We set $\dot{\theta}=\frac{\partial{\theta}}{\partial_{t}}$ and $\theta_{0}=\theta$. From the proof of part (i) in Theorem 5.6, we have
\begin{equation}\label{Eq: derivative F theta t}
\frac{d}{dt}G(\theta_{t})=2\int_{Q}\Big\{\dot{\theta}\Big[\widehat{\Delta}(\widehat{\Delta}\theta_{t}-\widehat{\di}\hat{Z})-\widehat{\di}[(k_{g}\circ\pi)^{2}(\widehat{\grad}\theta_{t}+
\widehat{Z})]\Big]\Big\}\pi^{*}(\omega_{T}).
\end{equation}
Differentiating (\ref{Eq: derivative F theta t}) w.r.t the variable $t$, we get
\begin{eqnarray}\label{Eq: second derivative F theta t}
\frac{d^{2}}{dt^{2}}G(\theta_{t})&=&2\int_{Q}\Big\{\ddot{\theta}\Big[\widehat{\Delta}(\widehat{\Delta}\theta_{t}-\widehat{\di}\widehat{Z})-\widehat{\di}[(k_{g}\circ\pi)^{2}(\widehat{\grad}\theta_{t}+
\widehat{Z})]\Big]+\nonumber\\
&&+\dot{\theta}\Big[\widehat{\Delta}(\widehat{\Delta}\dot{\theta})-\widehat{\di}[(k_{g}\circ\pi)^{2}\widehat{\grad}\dot{\theta}]\Big]\Big\}\pi^{*}(\omega_{T}).
\end{eqnarray}
Evaluating (\ref{Eq: second derivative F theta t}) at $t=0$ and using (\ref{Eq: angle PDE}) (since $V$ is a biharmonic unit vector field), we have
\begin{eqnarray}
\frac{d^{2}}{dt^{2}}G(\theta_{t})|_{t=0}&=&2\int_{Q}\beta \widehat{\Delta}(\widehat{\Delta}\beta)\pi^{*}(\omega_{T})-2\int_{Q}\beta\widehat{\di}[(k_{g}\circ\pi)^{2}\widehat{\grad}\beta]\pi^{*}(\omega_{T})\nonumber\\
&=&2\int_{Q}(\widehat{\Delta}\beta)^{2}\pi^{*}(\omega_{T})-2\int_{Q}\widehat{\di}\Big(\beta(k_{g}\circ\pi)^{2}\widehat{\grad}\beta\Big)\pi^{*}(\omega_{T})\nonumber\\
&&+2\int_{Q}(k_{g}\circ\pi)^{2}\|\widehat{\grad}\beta\|_{\widehat{g}}^{2}\pi^{*}(\omega_{T})\nonumber\\
&=&2\int_{Q}(\widehat{\Delta}\beta)^{2}\pi^{*}(\omega_{T})+2\int_{Q}(k_{g}\circ\pi)^{2}\|\widehat{\grad}\beta\|_{\hat{g}}^{2}\pi^{*}(\omega_{T})\geq 0,\nonumber
\end{eqnarray}
where $\beta(\xi)=\dot{\theta}(\xi, 0)$ for any $\xi\in \Real^{2}$.
\end{proof}
\begin{rem}
In \cite{Wiegm95}, Wiegmink proved that harmonic unit vector fields on 2-dimensional torus $(T^{2}, g)$ are stable. Theorem 5.7 shows that this property also holds for biharmonic unit vector fields on $(T^{2}, g)$.
\end{rem}


\begin{thebibliography}{999}

\bibitem{Baird03}
P.~Baird and J.~C.~Wood,
\newblock{\em Harmonic morphisms between Riemannian manifolds},
\newblock London Math. Soc. Monogr. No. {\bf{29}}, Oxford University Press (2003).

\bibitem{Besse87}
A.~Besse,
\newblock{\em Einstein manifolds}, Springer-Verlag  (1987).

\bibitem{Boe97}
E.~Boeckx and L.~Vanhecke,
\newblock {\em Characteristic reflections on unit tangent sphere bundles},
\newblock Houston J. Math. {\bf{23}} (1997), 427--448.

\bibitem{CaMoOn01}
R.~Caddeo, S.~Montaldo, and C.~Oniciuc,
\newblock {\em Biharmonic submanifolds of $\Si^{3}$},
\newblock Intern. J. Math. {\bf{12}} (2001), 867--876.

\bibitem{CaMoOn02}
R.~Caddeo, S.~Montaldo, and C.~Oniciuc,
\newblock {\em Biharmonic submanifolds in spheres},
\newblock Israel J. Math. {\bf{130}} (2002), 109--123.

\bibitem{DragomirPerro11}
S.~Dragomir and D.~Perrone,
\newblock {\em Harmonic Vector Fields: Variational Principles and
Differential Geometry}, Elsevier (2011).

\bibitem{EellsSampson64}
J.~Eells and J.~H.~Sampson,
\newblock {\em Harmonic mappings of Riemannian manifolds},
\newblock Amer. J. Math. {\bf{86}}(1) (1964), 109--160.

\bibitem{EellsLemai83}
J.~Eells and L.~Lemaire,
\newblock {\em Selected topics in harmonic maps},
\newblock CBMS Regional Conference Series in Mathematics vol. {\bf{50}}, Amer. Math. Soc. Providence (1983).

\bibitem{FetcuLouOnic17}
D.~Fetcu, E.~Loubeau and C.~Oniciuc,
\newblock{\em Biharmonic tori in spheres},
\newblock Differential Geom.  Appl. {\bf{54}} part A (2017), 208--225.

\bibitem{GilMedr01}
O.~Gil-Medrano,
\newblock {\em Relationship between volume and energy of
vector fields}, \newblock Differential Geom.  Appl. {\bf{15}} (2001), 137--152.

\bibitem{MedrGonVanh01}
O.~Gil-Medrano, J.~C.~Gonz\'{a}lez-D\'{a}vila and L.~Vanhecke,
\newblock{\em Harmonic and minimal invariant unit vector fields on homogeneous Riemannian manifolds},
\newblock Houston J. Math. {\bf{27}}(2) (2001), 377--409.

\bibitem{HanYim98}
S.~D.~Han and J.~W.~Yim,
\newblock {\em Unit vector fields on spheres
which are harmonic maps},
\newblock Math. Z. {\bf{227}} (1998), 83--92.

\bibitem{Ishiha79} T.~Ishihara,
\newblock {\em Harmonic sections of tangent bundles},
\newblock J. Math. Tokushima Univ. {\bf{13}} (1979), 23--27.

\bibitem{Jiang08}
G.~Jiang,
\newblock{\em 2-harmonic maps and their first and second variational
formulas} Translated into English by Hajime Urakawa,
\newblock Note Mat. {\bf{28}} suppl. n. 1 (2008), 209--232.

\bibitem{LoubeauOniciuc16}
E.~Loubeau and C.~Oniciuc,
\newblock{\em Constant mean curvature proper-biharmonic surfaces of constant Gaussian curvature in spheres},
\newblock J. Math. Soc. Japan {\bf{68}} (2016), 997--1024.

\bibitem{Mark09}
M.~K.~Markellos,
\newblock{The biharmonicity of unit vector fields on the Poincar\'{e} half-space $H^{n}$},
\newblock Proceedings of the VIII International Colloquium on Differential Geometry, Santiago de Compostella,
World Scientific, (2009), 247--256.

\bibitem{MarkUra14}
M.~Markellos and H.~Urakawa,
\newblock{\em The bienergy of unit vector fields}, Ann. Glob. Anal. Geom. {\bf{46}} (2014), 431--457.

\bibitem{MarkUra}
M.~Markellos and H.~Urakawa,
\newblock{\em The biharmonicity of sections of the tangent bundle}, Monatsh. Math. {\bf{178}} (2015), 389--404.

\bibitem{Milnor76}
J.~Milnor,
\newblock{\em Curvature of left invariant metrics on Lie groups},
\newblock Adv. in Math. {\bf{21}} (1976), 293--329.

\bibitem{Nouhaud77}
O.~Nouhaud,
\newblock{\em Applications harmoniques d'une vari\'{e}t\'{e} riemannienne dans son fibr\'{e}
tangent}, \newblock C. R. Acad. Sci. Paris {\bf{284}} (1977), 815--818.

\bibitem{Ou16}
Y.~L.~Ou,
\newblock{\em Some recent progress of biharmonic submanifolds}, \newblock AMS Contemporary Mathematics {\bf{674}} (2016), 127--139.

\bibitem{OUWang11}
Y.~L.~Ou and Z.~P.~Wang,
\newblock{\em Constant mean curvature and totally umbilical biharmonic surfaces in 3-dimensional geometries},
\newblock J. Geom. Phys. {\bf{61}} (2011), 1845--1853.

\bibitem{Wiegm95}
G.~Wiegmink,
\newblock {\em Total bending of vector fields on
Riemannian manifolds}, \newblock Math. Ann. {\bf{303}} (1995), 325--344.

\bibitem{Wood86}
C.~M.~Wood,
\newblock{\em The Gauss section of a Riemannian immersion}, \newblock J. London Math. Soc. {\bf{33}} (1986), 157--168.

\bibitem{Wood97}
C.~M.~Wood,
\newblock {\em On the energy of a unit vector field},
\newblock Geom. Dedicata {\bf{64}} (1997), 319--330.



\end{thebibliography}
\end{document}